\newtheorem{theorem}{Theorem}[section]
\newtheorem{lemma}[theorem]{Lemma}
\newtheorem{corollary}[theorem]{Corollary}
\newtheorem{remark}[theorem]{Remark}
\newtheorem{proposition}[theorem]{Proposition}
\numberwithin{equation}{section}
\newcommand{\lbl}[1]{\label{#1}}
\newcommand{\be}{\begin{equation}}
\newcommand{\ee}{\end{equation}}
\newcommand\bes{\begin{eqnarray}} \newcommand\ees{\end{eqnarray}}
\newcommand{\bess}{\begin{eqnarray*}}
\newcommand{\eess}{\end{eqnarray*}}
\newcommand{\bbbb}{\left\{\begin{aligned}}
\newcommand{\nnnn}{\end{aligned}\right.}
\newcommand{\bea}{\begin{align*}}
\newcommand{\eea}{\end{align*}}
\newcommand\ep{\varepsilon}
\newcommand\kk{\left}
\newcommand\rr{\right}
\newcommand\dd{\displaystyle}
\newcommand\dx{{\rm d}x}
\newcommand\dy{{\rm d}y}
\newcommand\yy{\infty}
\newcommand\sk{\smallskip}
\newcommand\ccc{\color{blue}}
\begin{document}\thispagestyle{empty}
\setlength{\baselineskip}{16pt}

\begin{center}
 {\LARGE\bf Sharp estimates for a nonlocal diffusion problem with a free boundary\footnote{This work was supported by NSFC Grants
11771110, 11971128}}\\[4mm]
  {\Large Lei Li, \ \  Mingxin Wang\footnote{Corresponding author. {\sl E-mail}: mxwang@hit.edu.cn}}\\[1.5mm]
{School of Mathematics, Harbin Institute of Technology, Harbin 150001, PR China}
\end{center}

\date{\today}

\begin{abstract}
In this paper, we proceed to study the nonlocal diffusion problem proposed by Li and Wang \cite{LW21}, where the left boundary is fixed, while the right boundary is a nonlocal free boundary. We first give some accurate estimates on the longtime behavior by constructing lower solutions, and then investigate the limiting profiles of this problem when the expanding coefficient of free boundary converges to $0$ and $\yy$, respectively. At last, we focus on two important kinds of kernel functions, one of which is compactly supported and the other behaves like $|x|^{-\gamma}$ with $\gamma\in(1,2]$ near infinity. With the help of some upper and lower solutions, we obtain some sharp estimates on the longtime behavior and rate of accelerated spreading.

\textbf{Keywords}: Nonlocal diffusion; free boundary; accelerated spreading; spreading speed.

\textbf{AMS Subject Classification (2000)}: 35K57, 35R09,
35R20, 35R35, 92D25

\end{abstract}

\section{Introduction}
Recently, in \cite{CDLL} the following nonlocal diffusion model with free boundaries has been investigated
\bes\left\{\begin{aligned}\label{1.1}
&u_t=d\int_{g(t)}^{h(t)}J(x,y)u(t,y)\dy-du+f(u), & &t>0,~g(t)<x<h(t),\\
&u(t,x)=0,& &t>0, ~ x\notin(g(t),h(t)),\\
&h'(t)=\mu\int_{g(t)}^{h(t)}\int_{h(t)}^{\infty}
J(x,y)u(t,x)\dy\dx,& &t>0,\\
&g'(t)=-\mu\int_{g(t)}^{h(t)}\int_{-\infty}^{g(t)}
J(x,y)u(t,x)\dy\dx,& &t>0,\\
&h(0)=-g(0)=h_0>0,\;\; u(0,x)=\tilde u_0(x),& &|x|\le h_0,
 \end{aligned}\right.
 \ees
where $J(x,y)=J(x-y)$, $\tilde{u}_0\in C([-h_0,h_0])$, $\tilde{u}_0(\pm h_0)=0<\tilde{u}_0$ in $(-h_0,h_0)$, the kernel $J$ satisfies
 \begin{enumerate}[leftmargin=4em]
\item[{\bf(J)}]$J\in C(\mathbb{R})\cap L^{\yy}(\mathbb{R})$, $J\ge 0$, $J(0)>0,~\dd\int_{\mathbb{R}}J(x)\dx=1$, \ $J$\; is even,
 \end{enumerate}
 and the growth term $f$ satisfies
  \begin{enumerate}
  \item[{\bf(F)}]\, $f\in C^1(\mathbb{R})$, $f(0)=0<f'(0)$, $f(u^*)=0>f'(u^*)$ for some $u^*>0$, and $\frac{f(u)}{u}$ is strictly decreasing in $u>0$.
 \end{enumerate}

The authors proved that \eqref{1.1} has a unique global solution $(u,g,h)$, and established the spreading-vanishing dichotomy. In particular, they applied the ODE theory and the contracting mapping principle to obtain the existence and uniqueness of global solution. Then Du, Li and Zhou \cite{DLZ} discussed the spreading speed of \eqref{1.1} when spreading happens. More precisely, they found that there exists a threshold condition on $J$, i.e.,
\begin{enumerate}[leftmargin=4em]
\item[{\bf(J1)}]$\dd\int_{0}^{\yy}xJ(x)\dx<\yy$,
 \end{enumerate}
such that \eqref{1.1} has a finite spreading speed if and only if {\bf(J1)} holds true. Moreover, they considered the limiting profile of \eqref{1.1} when the expanding rate $\mu$ of free boundary converges to $\yy$. Afterwards, Du and Ni \cite{DN21} not only extended the above results to monostable cooperative systems with partially degenerate diffusion and free boundaries, but gave more comprehensive and delicate conclusions on spreading speed. Especially, if $J$ satisfies $J(x)\approx |x|^{-\gamma}$ with $\gamma>1$, namely,
\begin{enumerate}[leftmargin=4em]
\item[${\bf(J^\gamma)}$] there exist $\varsigma_1,\varsigma_2>0$ such that $\varsigma_1|x|^{-\gamma}\le J(x)\le \varsigma_2|x|^{-\gamma}$ when $|x|\gg 1$,
 \end{enumerate}
then they obtained a complete understanding of spreading speed. Besides, Du and Ni \cite{DN212} introduced  the high dimensional and radial symmetry version of \eqref{1.1}, and some difficulties caused by kernel function have been overcome by a series of new methods. For finite spreading speed and accelerated spreading, a new threshold condition on kernel function was found. There are other recent works on nonlocal diffusion problem with free boundaries, please see e.g. \cite{DWZ,DN200,ZZLD} and references therein.

Very recently, Li and Wang \cite{LW21} put forward the following model
 \bes\label{1.2}
\left\{\begin{aligned}
&u_t=d\dd\int_{0}^{h(t)}J(x,y)u(t,y)\dy-d\dd\left(\int_{0}^{\yy}J(x,y)\dy\right) u+f(u), && t>0,~0\le x<h(t),\\
&u(t,h(t))=0, && t>0,\\
&h'(t)=\mu\dd\int_{0}^{h(t)}\int_{h(t)}^{\infty}
J(x,y)u(t,x)\dy\dx, && t>0,\\
&h(0)=h_0,\;\; u(0,x)=u_0(x), &&x\in[0,h_0],
\end{aligned}\right.
 \ees
 where $J$ satisfies {\bf(J)}, and $u_0\in C([0,h_0])$, $u_0(h_0)=0<u_0(x)$ in $[0,h_0)$.

In this model, the species is assumed not to jump to the domain $(-\yy,0)$, which implies that the species only expands their habitat through the right boundary. The existence and uniqueness of global solution of \eqref{1.2} was first proved by using the similar arguments with those of \cite{CDLL}. Then the spreading-vanishing dichotomy was established, namely, one of the following alternatives must hold for \eqref{1.2}:

\sk{\rm(1)}\, \underline{Spreading:} $\lim_{t\to\yy}h(t)=\yy$ and $\lim_{t\to\yy}u=u^*$ locally uniformly in $\overline{\mathbb{R}}^+$;

\sk{\rm(2)}\, \underline{Vanishing:} $\lim_{t\to\yy}h(t)<\yy$ and $\lim_{t\to\yy}\|u(t,\cdot)\|_{C([0,h(t)])}=0$.

In addition, similarly to \cite{DLZ}, the authors also derived that if spreading happens for \eqref{1.2}, then
 \begin{align*}
\lim_{t\to\yy}\frac{h(t)}{t}=\left\{\begin{aligned}
&c_0& &{\rm if~{\bf(J1)}~is~satisfied},\\
&\yy& &{\rm if~{\bf(J1)}~is~not~satisfied},
\end{aligned}\right.
  \end{align*}
  where $c_0$ is uniquely given by the semi-wave problem \eqref{2.1}.

Here we would like to remark that the problems with the Stefan boundary condition have been studied extensively and systematically since the pioneering work \cite{DL2010}. For example, interested readers may refer to \cite{GLZ,DL,Wang,DDL,HHD} and references therein for some recent developments on such problems. Particularly, by virtue of some subtle upper and lower solutions, Du, Matsuzawa and Zhou \cite{DMZ} obtained some sharp estimates on the solution of problem
 \bes\label{1.3}
\left\{\begin{aligned}
&u_t=du_{xx}+\hat f(u), && t>0,~g(t)<x<h(t),\\
&u(t,g(t))=u(t,h(t))=0, && t>0,\\
&g'(t)=-\mu u_x(t,g(t)), ~ h'(t)=-\mu u_x(t,h(t)), && t>0,\\
&-g(0)=h(0)=h_0,\;\; u(0,x)=\hat u_0(x), &&x\in[-h_0,h_0],
\end{aligned}\right.
 \ees
 where $\hat{u}_0(x)\in C^2([-h_0,h_0])$, $\hat{u}_0(\pm h_0)=0<\hat{u}_0(x)$ in $(-h_0,h_0)$, $\hat{u}'(-h_0)>0>\hat{u}'(h_0)$. The function $\hat f$ is of monostable, bistable, or combustion type, and thus has a unique maximal positive zero $\hat{u}^*$. To be exact, they proved that if spreading happens, then the solution of \eqref{1.3} satisfies
 \bes\label{1.4}
 \left\{\begin{aligned}
 &\lim_{t\to\yy}h'(t)=k_0, ~ ~\;\; \lim_{t\to\yy}\big(h(t)-k_0t-\hat{H}\big)=0 ~ ~ {\rm for ~ some} ~ \hat{H}\in\mathbb{R},\\
 &\lim_{t\to\yy}g'(t)=-k_0, ~ ~ \lim_{t\to\yy}\big(g(t)+k_0t-\hat{G}\big)=0 ~ ~ {\rm for ~ some} ~ \hat{G}\in\mathbb{R},\\
 &\lim_{t\to\yy}\max_{x\in[0,h(t)]}\big|u(t,x)-q_{k_0}(h(t)-x)\big|=0,\ \
 \lim_{t\to\yy}\max_{x\in[g(t),0]}\big|u(t,x)-q_{k_0}(x-g(t))\big|=0,
 \end{aligned}\right.
 \ees
 where $(k_0,q_{k_0})$ is uniquely given by the semiwave problem
  \bes\label{1.5}
 \left\{\begin{aligned}
 &dq''-kq'+\hat f(q)=0, ~ ~ q(x)>0, ~~ 0<x<\yy,\\
 &q(0)=0, ~ q(\yy)=\hat u^*, ~\; \mu q'(0)=k.
 \end{aligned}\right.
 \ees

Motivated by the above works, in this paper we continue to study problem \eqref{1.2}. In Section 2, we first give some more accurate estimates for longtime behavior of solution of \eqref{1.2}, and then analyze the limiting profiles of \eqref{1.2} when $\mu\to0$ and $\mu\to\yy$, respectively. Section 3 is devoted to the discussion of spreading speed of \eqref{1.2} when $J$ has a compact support. In Section 4, we assume that $J(x)\approx |x|^{-\gamma}$ with $\gamma\in(1,2]$, and give estimates on the rate of accelerated spreading and longtime behavior for \eqref{1.2}.

\section{Limiting profiles of \eqref{1.2} as $\mu\to 0$ and $\mu\to\yy$ }
\setcounter{equation}{0} {\setlength\arraycolsep{2pt}

For convenience, some notations are given here. Denote $\mathbb{R}^+=(0,\yy)$, $\overline{\mathbb{R}}^+=[0,\yy)$ and $j(x)=\int_{0}^{\yy}J(x,y)\dy$.
In this paper, we always assume that $J$ satisfies the condition {\bf(J)}.

\begin{proposition}[{\cite[Theorem 1.2]{DLZ}}]\label{p2.1}\, Let $f$ satisfy the condition {\bf(F)}. Then the problem
\bes\label{2.1}\left\{\begin{array}{lll}
 d\dd\int_{-\yy}^{0}J(x,y)\phi(y)\dy-d\phi+c\phi'+f(\phi)=0, \quad -\yy<x<0,\\[1mm]
\phi(-\yy)=u^*,\ \ \phi(0)=0, \ \ c=\mu\dd\int_{-\yy}^{0}\int_{0}^{\yy}J(x,y)\phi(x)\dy\dx
 \end{array}\right.
 \ees
 has a unique solution pair $(c_0,\phi^{c_0})$ with $c_0>0$ and $\phi^{c_0}(x)$ nonincreasing in $(-\yy,0]$ if and only if {\bf(J1)} is satisfied. We usually call $\phi^{c_0}$ the semi-wave solution of \eqref{2.1} with speed $c_0$.
\end{proposition}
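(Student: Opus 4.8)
The statement is attributed to \cite[Theorem 1.2]{DLZ}, so in principle the whole argument may be quoted; nonetheless, let me sketch how the existence/uniqueness of the semi-wave pair $(c_0,\phi^{c_0})$ should be proved, since the structure of that proof underpins everything that follows in Sections 3 and 4.

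\medskip

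The plan is to decouple the nonlinear eigenvalue-type condition $c=\mu\int_{-\infty}^0\int_0^\infty J(x,y)\phi(x)\,\dy\,\dx$ from the profile equation by first solving, for each fixed speed $c>0$, the \emph{linear-in-$c$} problem
\[
d\int_{-\infty}^{0}J(x-y)\phi(y)\,\dy-d\phi+c\phi'+f(\phi)=0 \ \text{ on } (-\infty,0),\qquad \phi(-\infty)=u^*,\ \phi(0)=0,
\]
and showing it has a unique nonincreasing solution $\phi^c$; then define the scalar map
\[
\Gamma(c):=\mu\int_{-\infty}^{0}\!\!\int_{0}^{\infty}J(x-y)\,\phi^c(x)\,\dy\,\dx
=\mu\int_{-\infty}^{0}\Big(\int_{-\infty}^{x}J(z)\,\mathrm dz\Big)\phi^c(x)\,\dx,
\]
and find $c_0$ with $\Gamma(c_0)=c_0$ by a monotonicity/continuity argument. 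First I would establish the solvability of the $c$-fixed semi-wave: existence via a monotone iteration (sub/supersolution) scheme using $u^*$ as a supersolution and a small multiple of a suitable test function as a subsolution, with $f(u)/u$ strictly decreasing guaranteeing that the iteration is order-preserving and that the limit is nontrivial; uniqueness via a sliding argument exploiting the strict subhomogeneity of $f$ (a standard trick: if $\phi_1,\phi_2$ are two solutions, the ratio $\sup \phi_1/\phi_2$ is attained and, feeding it into the equation, subhomogeneity forces it to be $1$). I would also record that $\phi^c$ depends continuously and monotonically on $c$: larger $c$ pushes the front, so $c\mapsto\phi^c(x)$ is (weakly) monotone, and in particular $c\mapsto\Gamma(c)$ is continuous.

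\medskip

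The second half is the fixed-point step $\Gamma(c_0)=c_0$, and this is where condition \textbf{(J1)} enters. As $c\to 0^+$, the semi-wave $\phi^c$ converges to the (positive, stationary) solution of the nonlocal equation with no advection, so $\Gamma(0^+)>0$; hence $\Gamma(c)>c$ for small $c$. The crux is the behavior as $c\to\infty$: one shows $\Gamma(c)/c\to 0$ precisely when $\int_0^\infty xJ(x)\,\dx<\infty$. Heuristically, for large $c$ the profile $\phi^c$ spreads out over a spatial scale $\sim c$, so $\int_{-\infty}^0\big(\int_{-\infty}^x J\big)\phi^c(x)\,\dx$ is controlled by $\|\phi^c\|_\infty\int_{-\infty}^0\int_{-\infty}^x J(z)\,\mathrm dz\,\dx = u^*\int_0^\infty zJ(z)\,\mathrm dz$, a finite constant independent of $c$, whence $\Gamma(c)=O(1)$ and $\Gamma(c)/c\to 0$; conversely, if $\int_0^\infty xJ(x)\,\dx=\infty$ this tail integral diverges and one checks $\Gamma(c)\ge \varepsilon c$ for all large $c$, obstructing any crossing. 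Given $\Gamma(0^+)>0=0^+$ and $\Gamma(c)<c$ for large $c$ (under \textbf{(J1)}), the intermediate value theorem yields a solution $c_0>0$; uniqueness of $c_0$ follows from the already-established fact that $c\mapsto\phi^c$ is monotone, making $c\mapsto\Gamma(c)$ nonincreasing (or at least showing $\Gamma(c)-c$ is strictly decreasing), so the crossing point is unique, and monotonicity of $\phi^{c_0}$ in $x$ is inherited from the monotone iteration.

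\medskip

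The main obstacle is the delicate asymptotic analysis of $\phi^c$ as $c\to\infty$ and the sharp link to \textbf{(J1)}: one must obtain uniform (in $c$) control of the ``mass'' $\int_{-\infty}^0\phi^c$ against the kernel tail, which requires careful a priori bounds on how $\phi^c$ rescales — the naive bound $\phi^c\le u^*$ is not enough to get the right rate, and the reverse inequality in the non-\textbf{(J1)} case needs a lower bound on $\phi^c$ on an interval of length $\sim c$. A secondary technical point is that the convolution is only over the half-line $(-\infty,0)$, so the usual translation-invariance tricks for semi-waves on $\mathbb{R}$ must be adapted; the strict monotonicity of $f(u)/u$ is the tool that compensates, both for uniqueness of the profile and for the monotone-dependence of $\phi^c$ on $c$. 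Since the result is exactly \cite[Theorem 1.2]{DLZ}, I would ultimately cite that paper for the full details and only reproduce the $c$-fixed solvability and the $\Gamma$-crossing skeleton as needed for the new estimates in Sections 3--4.
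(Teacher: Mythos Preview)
Your proposal is correct and aligns with the paper's approach: the paper offers no proof whatsoever for this proposition, treating it purely as a citation of \cite[Theorem 1.2]{DLZ}, exactly as you anticipate in your final paragraph. Your sketch of the two-step strategy (solve the profile equation for each fixed $c$, then find the unique fixed point of $\Gamma(c)=c$ via monotonicity and the \textbf{(J1)}-controlled bound $\Gamma(c)\le \mu u^*\!\int_0^\infty yJ(y)\,\dy$) is an accurate outline of the argument in \cite{DLZ}; the only minor simplification you could make is in the ``only if'' direction, where one observes directly that for \emph{any} monotone profile with $\phi(-\infty)=u^*$ the double integral diverges when \textbf{(J1)} fails, so no finite $c$ can satisfy the constraint --- no asymptotic analysis of $\phi^c$ as $c\to\infty$ is needed there.
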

\begin{theorem}\label{t2.1}Let $(u,h)$ be a solution of \eqref{1.2} and spreading happen. Then we have
\bess\left\{\begin{aligned}
&\lim_{t\to\yy}\max_{x\in[0,\,ct]}|u(t,x)-u^*|=0 ~ {\rm for ~ any ~ } c\in(0,c_0) ~ ~ \text{if the condition {\bf(J1)} holds},\\
&\lim_{t\to\yy}\max_{x\in[0,\,ct]}|u(t,x)-u^*|=0 ~ {\rm for ~ any ~ } c>0 ~ ~ \text{if the condition {\bf(J1)} does not hold}.
\end{aligned}\right.\eess
\end{theorem}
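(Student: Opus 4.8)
The plan is to prove the statement by a squeezing argument based on a moving lower solution built from the semi-wave profile. Since spreading happens, we already know $h(t)\to\yy$ and $u(t,x)\to u^*$ locally uniformly; moreover $\limsup_{t\to\yy}u\le u^*$ follows from the comparison with the ODE $U'=f(U)$, so only the lower bound $u\ge u^*-\ep$ on the expanding interval $[0,ct]$ needs work. First I would recall that, from the spreading-vanishing dichotomy and the known spreading-speed result $h(t)/t\to c_0$ (resp. $h(t)/t\to\yy$), for any $c<c_0$ (resp. any $c>0$ in the non-$\mathbf{(J1)}$ case) one has $h(t)\ge c't$ eventually, with $c<c'<c_0$ (resp. $c'$ arbitrarily large); this gives room on the right for the construction.

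Next I would construct a lower solution of the form $\underline u(t,x)=\phi^{c'}(x-\xi(t))$ for the $\mathbf{(J1)}$ case, where $\phi^{c'}$ is the semi-wave with some speed $c'\in(c,c_0)$ (translated so that it lives on $x<\xi(t)$ and vanishes at $x=\xi(t)$), with $\xi(t)=c't+$ (a bounded shift), and possibly damped by a small subtraction or multiplication to absorb error terms near the left boundary $x=0$, since the semi-wave equation is posed on the half-line $(-\yy,0)$ rather than on $[0,h(t)]$. The key inequalities to verify are: (i) the nonlocal parabolic differential inequality $\underline u_t\le d\int_0^{h(t)}J(x,y)\underline u(t,y)\dy-d j(x)\underline u+f(\underline u)$ on $0<x<\xi(t)$, which reduces, after using the semi-wave equation \eqref{2.1}, to controlling the discrepancy between $\int_{-\yy}^0$ and $\int_{-\xi(t)+ \,\cdot}^{h(t)-\xi(t)}$ together with the extra $-dj(x)\underline u$ term versus $-d\underline u$; here $\xi(t)\to\yy$ and $h(t)-\xi(t)\to\yy$ make these errors exponentially small, which one kills by inserting a term like $-\ep e^{-\sigma t}$ or by slightly lowering the asymptotic value; (ii) $\underline u(t,\xi(t))=0\le u(t,\xi(t))$ and, on the right, $\xi(t)\le h(t)$ so the free-boundary comparison applies; (iii) at $x=0$ we need $\underline u(t,0)$ to be $\le$ whatever $u(t,0)$ is — but since near $x=0$ the profile $\phi^{c'}$ is close to $u^*$ whereas $u(t,0)$ need only be positive, one instead starts the comparison at a large time $T$ after which $u(T,\cdot)\ge \underline u(T,\cdot)$ on a suitable initial interval, using local uniform convergence $u\to u^*$ on compacts and the fact that $\underline u$ is genuinely below $u^*$. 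For the non-$\mathbf{(J1)}$ case the same scheme works but with $c'$ arbitrary: one uses truncated traveling-wave-type lower solutions (or the lower solutions for arbitrarily large finite speeds constructed from approximating kernels), exploiting that the absence of $\mathbf{(J1)}$ forces $h(t)/t\to\yy$.

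Having such a lower solution, the comparison principle for the nonlocal free-boundary problem (as established in \cite{LW21,CDLL}) gives $u(t,x)\ge \underline u(t,x)$ for $t\ge T$, $0\le x\le \xi(t)$. Then for $x\in[0,ct]$ with $c<c'$ we have $x-\xi(t)\le ct-c't\to-\yy$, so $\underline u(t,x)=\phi^{c'}(x-\xi(t))\to u^*$ uniformly in $x\in[0,ct]$ (using that $\phi^{c'}(-\yy)=u^*$ and monotonicity gives uniform convergence over the half-line $(-\yy,ct-\xi(t)]$), and combined with $\limsup u\le u^*$ this yields $\max_{x\in[0,ct]}|u(t,x)-u^*|\to0$.

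The main obstacle I expect is item (i): making the nonlocal differential inequality genuinely hold despite the mismatch between the domain $[0,h(t)]$ of the true problem and the half-line domain of the semi-wave equation, and despite the kernel term $j(x)=\int_0^\yy J(x,y)\dy$ (which is $<1$ near $x=0$) replacing the constant $1$. One must quantify that $j(x)\to1$ and the tail integrals $\int_{h(t)}^\yy J$ and $\int_{-\yy}^{-\xi(t)} J$ are small — under $\mathbf{(J1)}$ these are summable-tail small but not necessarily exponentially small, so the damping device ($-\ep e^{-\sigma t}$, or shifting the limiting height down by a controlled amount, or using a slightly supercritical speed $c'$ and a logarithmic correction in $\xi(t)$) must be chosen carefully so that its own time-derivative can absorb the error without violating the inequality; this is exactly the kind of delicate lower-solution bookkeeping done in \cite{DMZ} and \cite{DLZ}, and adapting it to the half-line geometry of \eqref{1.2} is the technical heart of the argument.
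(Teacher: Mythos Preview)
Your overall strategy --- squeeze from above by the ODE, from below by a lower solution built from a semi-wave profile, then use monotonicity of the profile to get uniform convergence on $[0,ct]$ --- matches the paper's. However, the implementation you sketch differs from the paper's in a way that matters. You propose to use a semi-wave $\phi^{c'}$ for the original kernel $J$ at a subcritical speed $c'\in(c,c_0)$ and absorb the domain-mismatch errors (the discrepancy between $\int_{-\infty}^{0}$ and $\int_{0}^{h(t)}$, and between $-d\underline u$ and $-dj(x)\underline u$) by an ad-hoc damping such as $-\ep e^{-\sigma t}$. The paper instead introduces a \emph{double approximation}: it perturbs $f$ to $\tilde f(u)=f(u)-\delta u$ (still satisfying {\bf(F)}, with zero $u^*_\delta\nearrow u^*$) and truncates $J$ to a compactly supported $J_n\nearrow J$; it then passes through an intermediate comparison with the solution $(u_n,h_n)$ of the free-boundary problem driven by $(J_n,\tilde f)$, and only afterwards compares $u_n$ with the explicit lower solution $(1-\ep)\phi^\delta_n(x-\underline h(t))$. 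Compact support of $J_n$ makes the boundary discrepancies vanish identically once the front is far enough from $x=0$, and the $-\delta u$ slack absorbs the $(1-\ep)$-scaling and $j_n$-versus-$j$ errors without any quantitative decay rate on the tail of $J$. Letting $n\to\infty$ and $\delta\to0$ then gives $c^\delta_n\to c_0$ in the {\bf(J1)} case and $c^\delta_n\to\infty$ otherwise, handling both regimes by the same construction.

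Your direct route has two soft spots. First, Proposition~\ref{p2.1} as stated here furnishes only the single pair $(c_0,\phi^{c_0})$; existence of semi-wave profiles at other speeds $c'<c_0$ is available from \cite{DLZ} but would have to be invoked separately (and in the non-{\bf(J1)} case there is no semi-wave at all, which is precisely why the paper needs the kernel truncation you mention parenthetically). Second --- and this is exactly the obstacle you already flag --- under {\bf(J1)} alone the error terms $\int_{\xi(t)}^\infty J$ and $u^*-\phi^{c'}(-\xi(t))$ decay only algebraically in general, so an $e^{-\sigma t}$ correction cannot absorb them; the ``shift the limiting height down'' fix you mention amounts, once made precise, to the $\tilde f=f-\delta u$ perturbation the paper uses. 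So your plan is not wrong in spirit, but the concrete mechanism that makes the lower-solution inequality close is the kernel-truncation/reaction-perturbation pair rather than a time-decaying additive correction.
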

\begin{proof} By a simple comparison argument, we have $\limsup_{t\to\yy}u(t,x)\le u^*$ uniformly in $\overline{\mathbb{R}}^+$.
Therefore, it suffices to show the lower limit of $u$. We first prove the conclusion with {\bf(J1)} being satisfied. It is easy to show that there is a small $\delta_0>0$ such that $\tilde{f}(u):=-\delta u+f(u)$ satisfies {\bf(F)} for all $\delta\in(0,\delta_0)$, and has a unique positive zero $u^*_{\delta}$. Obviously, $u^*_{\delta}\to u^*$ as $\delta\to0$. It then follows from \cite[Theorem 3.15]{LW21} that the semi-wave problem
 \bess\left\{\begin{array}{lll}
 d\dd\int_{-\yy}^{0}J(x,y)\phi(y)\dy-d\phi(x)+c\phi'(x)+\tilde f(\phi(x))=0,\;\;  -\yy<x<0,\\[3mm]
\phi(-\yy)=u^*_{\delta},\ \ \phi(0)=0, \ \ c=\mu\dd\int_{-\yy}^{0}\int_{0}^{\yy}J(x,y)\phi(x)\dy\dx
 \end{array}\right.
 \eess
 has a unique solution pair $(c^{\delta}_0,\phi^{\delta}_0)$ with $\phi^{\delta}_0$ nonincreasing and $c^{\delta}_0 \to c_0$ as $\delta\to0^+$. Thus, for any $c\in(0,c_0)$, we can find a $\delta_1$ with $0<\delta_1<\delta_0$ such that $c^{\delta}_0>c$ for all $\delta\in(0,\delta_1)$.
Define
\[\xi(x)=1, ~ |x|\le1; ~ ~ ~  \xi(x)=2-|x|, ~ 1\le|x|\le2; ~ ~ ~ \xi(x)=0, ~ |x|\ge2,\]
and $J_n(x)=\xi(\frac{x}{n})J(x)$. Clearly, $J_n$ are supported compactly and nondecreasing in $n$, $J_n(x)\le J(x)$,
\[J_n(x)\to J(x) ~ {\rm ~in~} L^1(\mathbb{R}){\rm~and~locally~uniformly~in~\mathbb{R}},\]
and
\[j_n(x):=\int_{-x}^{\yy}J_n(y)\dy\to j(x)=\int_{-x}^{\yy}J(y)\dy ~ ~ {\rm uniformly~in~}\mathbb{R}{\rm ~as~} n\to\yy.\]
 For any $\delta\in(0,\delta_1)$, we can choose $n$ large enough, say $n\ge N>0$, such that $d(\|J_n\|_{L^1}-1)u+\tilde{f}(u)$ still meets {\bf(F)} and $d(j_n(x)-j(x))+\delta\ge0$ for all $x\in \mathbb{R}$.

For any given $n\ge N$, let $(u_n,h_n)$ be the unique solution of
 \bess
\left\{\begin{aligned}
&u_{nt}=d\dd\int_{0}^{h_{n}(t)}J_n(x,y)u_n(t,y)\dy-dj_n(x)u_n+\tilde{f}(u_n), && t>0,~0\le x<h_n(t),\\
&u_n(t,h_n(t))=0, && t>0,\\
&h_n'(t)=\mu\dd\int_{0}^{h_n(t)}\!\!\int_{h_n(t)}^{\infty}
J_n(x,y)u_n(t,x)\dy\dx, && t>0,\\
&u_n(0,x)=u(T,x),~h_n(0)=h(T), \ \ x\in[0,h(T)].
\end{aligned}\right.
 \eess
By the comparison principle (\cite[Theorem 3.7]{LW21}), we have
\[u(t+T,x)\ge u_n(t,x), ~ ~ h(t+T)\ge h_n(t) ~ ~ {\rm for ~ }t\ge0, ~x\in[0,h_n(t)].\]
Moreover, it can be seen from the proof of \cite[Theorem 3.15]{LW21} that the following problem
\bess\left\{\begin{array}{lll}
 d\dd\int_{-\yy}^{0}J_n(x,y)\phi_n(y)\dy-d\phi_n(x)+c\phi_n'(x)+\tilde f(\phi_n(x))=0, \;\; -\yy<x<0,\\[3mm]
\phi_n(-\yy)=u^*_{n,\delta},\ \ \phi_n(0)=0, \ \ c_n=\mu\dd\int_{-\yy}^{0}\int_{0}^{\yy}J_n(x,y)\phi_n(x)\dy\dx
 \end{array}\right.
 \eess
 has a unique solution pair $(c^{\delta}_n,\phi^{\delta}_n)$ with $c^{\delta}_n>0$ and $\phi^{\delta}_n$ nonincreasing in $(-\yy,0]$. Here $u^*_{n,\delta}$ is the unique positive root of the equation $d(\|J_n\|_{L^1}-1)u+\tilde{f}(u)=0$. Clearly, $\lim_{n\to\yy}u^*_{n,\delta}= u^*_{\delta}$. By \cite[Lemma 4.5]{LW21},  we have $c^{\delta}_n\nearrow c^{\delta}_0$ as $n\to\yy$.
For any small $\ep>0$ and some $L\gg1$, we define
\[\underline{h}(t)=c^{\delta}_n(1-\ep)t+2L, ~ ~ \underline{u}(t,x)=(1-\ep)\phi^{\delta}_n(x-\underline{h}(t)).\]
As in the proof of \cite[Theorem 3.15]{LW21}, there exist suitable $T,\,T_1>0$ such that
\[u_n(t+T_1,x)\ge\underline{u}(t,x), ~ ~ h_n(t+T_1)\ge \underline{h}(t) ~ ~ {\rm for ~ }t\ge0, ~x\in[0,\underline{h}(t)].\]
Hence,
\[u(t+T+T_1,x)\ge\underline{u}(t,x), ~ ~ h(t+T+T_1)\ge \underline{h}(t) ~ ~ {\rm for ~ }t\ge0, ~x\in[0,\underline{h}(t)].\]
On the other hand, we easily derive
\[\max_{x\in[0,(1-2\ep)c^{\delta}_nt]}\big|\underline{u}(t,x)-
(1-\ep)u^*_{n,\delta}\big|=(1-\ep)\big[u^*_{n,\delta}-\phi^{\delta}_n(-\ep c^{\delta}_nt-2L)\big]\to 0\]
as $t\to\yy$. So we have $\liminf_{t\to\yy}u(t,x)\ge (1-\ep)u^*_{n,\delta}$ uniformly in $[0,(1-2\ep)c^{\delta}_nt]$. For any $c\in (0,c_0)$, we can take $\ep$ small enough and $N$ large sufficiently such that $c<(1-2\ep)c^{\delta}_n$. Therefore,
$\liminf_{t\to\yy}u(t,x)\ge (1-\ep)u^*_{n,\delta}$ uniformly in $[0,ct]$. The arbitrariness of $\ep,\delta$ and $n$ leads to
\[\liminf_{t\to\yy}u(t,x)\ge u^* ~ ~ {\rm uniformly ~ in ~}[0,ct].\]
Thus we get the desired result.

As for the case with {\bf(J1)} being violated, it follows from \cite[Proposition 5.1]{DN212} that $c^{\delta}_n\to\yy$ as $n\to\yy$. We then easily derive that for any $c>0$,
\[\liminf_{t\to\yy}u(t,x)\ge u^* ~ ~ {\rm uniformly ~ in ~}[0,ct].\]
The proof is complete.
\end{proof}

\begin{remark}\label{r2.1}Assume that spreading occurs for \eqref{1.2}. If {\bf(J1)} holds true, then we have that, for any $c\in(0,c_0)$,
\[\lim_{t\to\yy}\max_{x\in[0,\,ct]}\big|u(t,x)-\phi^{c_0}(x-h(t))\big|=0.\]
In fact, since $\lim_{t\to\yy}h(t)/t=c_0>c$, one easily deduces
  \bess\lim_{t\to\yy}\max_{x\in[0,\,ct]}\big|u(t,x)-\phi^{c_0}(x-h(t))\big|
  &\le&\lim_{t\to\yy}\max_{x\in[0,\,ct]}\big|u(t,x)-u^*\big|+\lim_{t\to\yy}
  \max_{x\in[0,\,ct]}\big|u^*-\phi^{c_0}(x-h(t))\big|\\
&=&\lim_{t\to\yy}\max_{x\in[0,\,ct]}\big|u(t,x)-u^*\big|+\lim_{t\to\yy}
\left[u^*-\phi^{c_0}(ct-h(t))\right]\\
&=&0.
  \eess
\end{remark}

\begin{corollary}\label{c2.1} Let $(u,g,h)$ be the unique solution of \eqref{1.1}. If spreading happens, then
\bess\left\{\begin{aligned}
&\lim_{t\to\yy}\max_{x\in[-ct,\,ct]}\big|u(t,x)-u^*\big|=0 ~ {\rm for ~ any ~ } c\in(0,c_0) ~ ~ {\rm  ~ if ~ {\bf(J1)} ~ is ~ satisfied },\\
&\lim_{t\to\yy}\max_{x\in[-ct,\,ct]}\big|u(t,x)-u^*\big|=0 ~ {\rm for ~ any ~ } c>0 ~ ~ {\rm  ~ if ~ {\bf(J1)} ~ is ~ violated}.
\end{aligned}\right.\eess
\end{corollary}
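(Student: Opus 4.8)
The plan is to argue exactly along the lines of the proof of Theorem~\ref{t2.1}, the only new feature being that for \eqref{1.1} one must control the two moving fronts $h(t)$ and $g(t)$ at the same time. As in Theorem~\ref{t2.1}, a comparison with the constant state $u^*$ (using that $f(u)/u$ is strictly decreasing, exactly as there) immediately gives $\limsup_{t\to\yy}u(t,x)\le u^*$ uniformly in $\R$, so everything reduces to establishing the matching lower estimate $\liminf_{t\to\yy}u(t,x)\ge u^*$ uniformly on $[-ct,ct]$.

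For that lower estimate I would reuse verbatim the perturbation–truncation device of Theorem~\ref{t2.1}: pick small $\delta>0$ so that $\tilde f(u)=-\delta u+f(u)$ still satisfies~{\bf(F)} with positive zero $u^*_\delta\to u^*$ as $\delta\to0$, and replace $J$ by the compactly supported kernels $J_n\le J$ with $J_n\nearrow J$ in $L^1(\R)$ and locally uniformly. The semi-wave problem attached to $(\tilde f,J_n)$ is precisely \eqref{2.1} with $(f,J)$ replaced by $(\tilde f,J_n)$ — i.e., the very problem already used in Theorem~\ref{t2.1} — so it has a unique nonincreasing solution pair $(c^\delta_n,\phi^\delta_n)$ with $c^\delta_n\nearrow c^\delta_0$ as $n\to\yy$ and $c^\delta_0\to c_0$ as $\delta\to0^+$ when~{\bf(J1)} holds, and with $c^\delta_n\to\yy$ as $n\to\yy$ when~{\bf(J1)} fails, by \cite[Proposition~5.1]{DN212}. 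Problem~\eqref{1.1} with $(f,J)$ replaced by $(\tilde f,J_n)$ still enjoys a comparison principle (\cite{CDLL}) and, for $n$ and the starting time $T$ large, still spreads; hence its solution $(u_n,g_n,h_n)$ with $u_n(0,\cdot)=u(T,\cdot)$, $g_n(0)=g(T)$, $h_n(0)=h(T)$ satisfies $u(t+T,\cdot)\ge u_n(t,\cdot)$, $g(t+T)\le g_n(t)$, $h(t+T)\ge h_n(t)$, together with $u_n\to u^*_\delta$ locally uniformly, $h_n(t)\to\yy$ and $g_n(t)\to-\yy$.

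Now comes the only genuinely new step. Near the right front I take, exactly as in Theorem~\ref{t2.1}, $\underline h(t)=c^\delta_n(1-\ep)t+2L$ and the one-sided lower solution $\underline u_{\rm r}(t,x)=(1-\ep)\phi^\delta_n(x-\underline h(t))$ on $[0,\underline h(t)]$; near the left front I exploit that $J$ is \emph{even} — so that $x\mapsto\phi^\delta_n(-x)$ solves the mirrored semi-wave problem on $(0,\yy)$ (equation, boundary conditions and free-boundary identity all reflect correctly) — to set $\underline g(t)=-c^\delta_n(1-\ep)t-2L$ and $\underline u_{\ell}(t,x)=(1-\ep)\phi^\delta_n(\underline g(t)-x)$ on $[\underline g(t),0]$. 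That each of these is a lower solution of the truncated problem for $t\ge T_1$ and $L$ large is word for word the computation already carried out in \cite[Theorem~3.15]{LW21} and in the proof of Theorem~\ref{t2.1}: since $J_n$ is compactly supported and $h_n(t)\to\yy$, $g_n(t)\to-\yy$, the left lower solution never feels the right front (and conversely), while at the inner endpoint $x=0$ one only needs $u_n(t+T_1,0)$ to exceed $(1-\ep)\phi^\delta_n(\mp\underline h(t))\to(1-\ep)u^*_{n,\delta}$, which holds because $u_n\to u^*_\delta>(1-\ep)u^*_{n,\delta}$ near $x=0$. Comparison then gives $\liminf_{t\to\yy}u(t,x)\ge(1-\ep)u^*_{n,\delta}$ uniformly in $[0,(1-2\ep)c^\delta_n t]$ via $\underline u_{\rm r}$ and uniformly in $[-(1-2\ep)c^\delta_n t,0]$ via $\underline u_{\ell}$, hence uniformly in $[-(1-2\ep)c^\delta_n t,(1-2\ep)c^\delta_n t]$. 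Given $c\in(0,c_0)$ (resp. any $c>0$ when~{\bf(J1)} fails), choosing $\ep$ small, then $\delta$ small, then $n$ large so that $(1-2\ep)c^\delta_n>c$ yields $\liminf_{t\to\yy}u(t,x)\ge(1-\ep)u^*_{n,\delta}$ uniformly on $[-ct,ct]$, and letting $\ep,\delta\to0$ and $n\to\yy$ gives the lower bound $\liminf_{t\to\yy}u(t,x)\ge u^*$ there; combined with the upper bound this finishes the proof. I expect the main obstacle to be purely a matter of careful bookkeeping rather than of new ideas: one must check that the reflected semi-wave genuinely solves the left-front semi-wave problem (this is where evenness of $J$ is essential) and that the two one-sided lower solutions may be applied independently on $[\underline g(t),0]$ and $[0,\underline h(t)]$ — which, thanks to the compact support of $J_n$ and the divergence of both fronts, reduces to exactly the single-front estimate already established for Theorem~\ref{t2.1}.
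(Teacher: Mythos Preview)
Your proposal is correct, but the paper takes a shorter path in the case when {\bf(J1)} holds. Rather than running the full truncation--perturbation device of Theorem~\ref{t2.1} and then splitting into two one-sided subsolutions, the paper simply quotes the ready-made two-sided lower solution from \cite[Lemma~3.2]{DLZ}: for small $\ep>0$ and large $\ell$ there is $T>0$ with
\[
u(t+T,x)\ \ge\ (1-\ep)\bigl[\phi^{c_0}(x-\sigma(t))+\phi^{c_0}(-x-\sigma(t))-u^*\bigr],\qquad \sigma(t)=(1-2\ep)c_0t+\ell,
\]
on $[-\sigma(t),\sigma(t)]$, using the \emph{actual} semi-wave $\phi^{c_0}$ (no replacement of $J$ by $J_n$ or $f$ by $\tilde f$ is needed). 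One then reads off $\liminf_{t\to\yy}u\ge(1-\ep)u^*$ uniformly on $[-(1-3\ep)c_0t,(1-3\ep)c_0t]$ and lets $\ep\to0$. Only for the case when {\bf(J1)} fails does the paper fall back on the truncation argument, exactly as you do.

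Your route has the virtue of treating both cases uniformly, and your use of evenness of $J$ to reflect the semi-wave is correct. The one point that deserves more than ``bookkeeping'' is the half-domain comparison: to compare $\underline u_{\rm r}$ with $u_n$ on $[0,\underline h(t)]$ inside the two-sided problem \eqref{1.1}, a pointwise inequality at $x=0$ alone is not enough, because the nonlocal operator at $x\in[0,r_n]$ sees values on $[-r_n,0]$ (here $[-r_n,r_n]$ is the support of $J_n$). You therefore need $\underline u_{\rm r}(t,x)\le u_n(t+T_1,x)$ on a full strip $[-r_n,0]$, which does follow from $u_n\to u^*_{n,\delta}$ locally uniformly, and then invoke a comparison principle of the type \cite[Theorem~3.8]{LW21}. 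Alternatively, and closer to the paper's spirit, you can avoid this altogether by gluing your two pieces into the single lower solution $(1-\ep)[\phi^\delta_n(x-\underline h(t))+\phi^\delta_n(-x-\underline h(t))-u^*_{n,\delta}]$ on $[-\underline h(t),\underline h(t)]$, which is precisely the \cite{DLZ} construction applied to $(J_n,\tilde f)$.
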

\begin{proof}We just give the sketch of the proof since it can be obtained by similar methods with that of Theorem \ref{t2.1}.
From the proof of \cite[Lemma 3.2]{DLZ}, for any small $\ep>0$ and some $\ell\gg1$, there exists $T>0$ such that
\bess
 &u(t+T,x)\ge (1-\ep)\left[\phi^{c_0}(x-\sigma(t))+\phi^{c_0}(-x-\sigma(t))-u^*\right],&\\[1mm]
 &h(t+T)\ge\sigma(t), ~ ~ g(t+T)\le-\sigma(t) ~ ~ {\rm for } ~ t\ge0, ~ x\in[-\sigma(t),\sigma(t)],&
 \eess
where $\sigma(t):=(1-2\ep)c_0t+\ell$. Direct calculations show that, for $x\in[-(1-3\ep)c_0t,(1-3\ep)c_0t]$,
\[|\phi^{c_0}(x-\sigma(t))+\phi^{c_0}(-x-\sigma(t))-2u^*|=2[u^*-\phi^{c_0}(-\ep c_0t-\ell)]\to 0\]
as $t\to\yy$. Thus, we have
\[\liminf_{t\to\yy}u(t,x)\ge (1-\ep)u^* ~ {\rm uniformly ~ in} ~ [-(1-3\ep)c_0t,(1-3\ep)c_0t].\]
For any $c\in(0,c_0)$, one may choose $\ep$ small enough, say $\ep\in(0,\ep_1)$, such that $c<(1-3\ep)c_0$ for any $\ep\in(0,\ep_1)$. Due to the arbitrariness of $\ep$, we obtain
$\liminf_{t\to\yy}u(t,x)\ge u^*$ uniformly in $[-ct,ct]$, which, together with the fact that $\limsup_{t\to\yy}u(t,x)\le u^*$ uniformly in $[g(t),h(t)]$, yields the assertion when $J$ satisfies {\bf(J1)}.

For the result with {\bf(J1)} being violated, we may argue as in the proof of Theorem \ref{t2.1} and the arguments in \cite[Section 4]{DLZ} to prove it. The details are omitted here.
\end{proof}
Now we are going to consider the limiting profiles of \eqref{1.2} as $\mu\to0$ and $\mu\to\yy$, respectively. For the sake of discussion, we first state a proposition which can be found in \cite{DLZ,Ya}.
 \begin{enumerate}[leftmargin=4em]
\item[{\bf(J2)}] There exists $\lambda>0$ such that
\[\dd\int_{-\yy}^{\yy}J(x)e^{\lambda x}\dx<\yy.\]
 \end{enumerate}

\begin{proposition}[{\cite[Proposition 1.3]{DLZ}}]\label{p2.2} Let the condition {\bf(J)} hold and $f$ satisfy the condition {\bf(F)}. If the condition {\bf(J2)} holds, then there exists a constant $c_*>0$ such that the problem
 \bes\label{2.2}\left\{\begin{array}{lll}
 d\dd\int_{-\yy}^{\yy}J(x,y)\phi(y)\dy-d\phi+c\phi'+f(\phi)=0, \quad x\in\mathbb{R},\\[1mm]
\phi(-\yy)=u^*,\ \ \phi(\yy)=0
 \end{array}\right.
 \ees
 has a nonincreasing solution $\phi_c$ with speed $c$ if and only if $c\ge c_*$. And for $c\ge c_*$, $\phi_c\in C^1(\mathbb{R})$. If $J$ does not satisfy the condition {\bf(J2)}, then \eqref{2.2} does not have such nonincreasing solution.
 \end{proposition}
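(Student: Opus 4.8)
The plan is to reduce \qq{2.2} to an analysis of the bilateral Laplace transform $\hat J(\lm):=\int_{\R}J(y)e^{\lm y}\,\dy\in(0,\yy]$ of the kernel and then to run the classical barrier construction for monostable nonlocal equations. Condition {\bf(J2)} says exactly that $\hat J$ is finite on a maximal interval $[0,\Lambda)$ with $\Lambda>0$; since {\bf(J2)} $\Rightarrow$ {\bf(J1)} and $J$ is even, $\hat J$ is smooth and strictly convex on $(0,\Lambda)$, with $\hat J(0)=1$ and $\hat J'\to0$ as $\lm\to0^+$, whence $\hat J\ge1$ on $[0,\Lambda)$. Inserting the formal ansatz $\phi(x)\sim e^{-\lm x}$ as $x\to+\yy$ into the linearization of \qq{2.2} at $0$ yields the characteristic equation $\mathcal P(\lm,c):=d(\hat J(\lm)-1)+f'(0)-c\lm=0$. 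Because $\mathcal P(0,c)=f'(0)>0$ and $\lm\mapsto\mathcal P(\lm,c)$ is convex, the quotient $Q(\lm):=\big(d(\hat J(\lm)-1)+f'(0)\big)/\lm$ is continuous and positive on $(0,\Lambda)$, tends to $+\yy$ as $\lm\to0^+$, and is bounded away from $0$ near $\Lambda$; hence $c_*:=\inf_{0<\lm<\Lambda}Q(\lm)$ is finite and positive, attained at some $\lm_*\in(0,\Lambda]$ with $\mathcal P(\lm_*,c_*)=0$. For $c\ge c_*$ the equation $\mathcal P(\cdot,c)=0$ then has a smallest positive root $\lm_1(c)\in(0,\lm_*]$, whereas for $c<c_*$ we have $\mathcal P(\lm,c)>0$ on all of $(0,\Lambda)$; this last fact underlies the ``only if'' part, since under {\bf(J2)} a nonincreasing solution of speed $c$ must decay, as $x\to+\yy$, exponentially at a rate equal to a positive root of $\mathcal P(\cdot,c)$, so none exists when $c<c_*$ (this is made rigorous by standard exponential-weight comparisons, equivalently: a slower wave would violate the spreading speed $c_*$ of the corresponding Cauchy problem).

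For the existence when $c\ge c_*$, I would use the barriers
\[\bar\phi(x)=\min\big\{u^*,\,e^{-\lm_1 x}\big\},\qquad \underline\phi(x)=\max\big\{0,\,e^{-\lm_1 x}-q\,\eta(x)\big\}\qquad(q\gg1),\]
where $\eta(x)=e^{-\theta\lm_1 x}$ with $\theta>1$ close to $1$ so that $\mathcal P(\theta\lm_1,c)<0$ (possible since $\mathcal P(\cdot,c)<0$ just to the right of $\lm_1$ when $c>c_*$), and $\eta(x)=\max\{0,x\}\,e^{-\lm_* x}$ in the critical case $c=c_*$. The inequality $f(u)<f'(0)u$ for $u>0$ — which is exactly the strict monotonicity of $f(u)/u$ in {\bf(F)} — makes $\bar\phi$ a generalized supersolution of \qq{2.2}, and a parallel computation using the behaviour of $f$ near $0$ makes $\underline\phi$ a generalized subsolution, with $0\le\underline\phi\le\bar\phi\le u^*$. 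Rewriting \qq{2.2} as $c\phi'-(d+M)\phi+\big(d(J*\phi)+f(\phi)+M\phi\big)=0$ with $M$ so large that $\phi\mapsto d(J*\phi)+f(\phi)+M\phi$ is order-preserving on $[0,u^*]$, and inverting the operator $c\,\partial_x-(d+M)$ from $+\yy$, turns \qq{2.2} into a fixed-point equation $\phi=T[\phi]$ with $T$ monotone and monotonicity-preserving; the iteration started at $\bar\phi$ stays between $\underline\phi$ and $\bar\phi$ and converges to a nonincreasing solution $\phi_c$. Since $\bar\phi(+\yy)=0$ we get $\phi_c(+\yy)=0$; since $\underline\phi$ is positive on a half-line, $\phi_c>0$ on $\R$, so its monotone limit $\phi_c(-\yy)$ is a zero of $f$ in $(0,u^*]$, i.e. equals $u^*$. (If one prefers not to track the behaviour of $f$ at $0$ needed for $\underline\phi$, the same conclusion follows by solving \qq{2.2} on the intervals $[-n,n]$ with boundary data $u^*$ and $0$, normalising by a translation, and letting $n\to\yy$, the KPP condition excluding a trivial limit.) Finally, $C^1$-regularity is immediate: in $c\phi_c'(x)=d\phi_c(x)-d(J*\phi_c)(x)-f(\phi_c(x))$ the right-hand side is continuous because $J\in L^1(\R)$, $\phi_c\in L^\yy(\R)$ (so $J*\phi_c\in C(\R)$), and $c\ge c_*>0$; hence $\phi_c\in C^1(\R)$.

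The step I expect to be the main obstacle is the nonexistence of nonincreasing solutions when {\bf(J2)} fails, and the cleanest route is dynamical. Suppose $\hat J(\lm)=\yy$ for every $\lm>0$ and, for contradiction, let $\phi$ be a bounded nonincreasing solution of \qq{2.2} with $\phi(-\yy)=u^*$, $\phi(+\yy)=0$, of some speed $c$. First $c>0$: multiplying \qq{2.2} by $\phi'$, integrating over $\R$, and using the identity $\int_{\R}\phi'\,(J*\phi)\,\dx=-\tfrac12(u^*)^2$ (integration by parts and evenness of $J$) gives $c\int_{\R}(\phi')^2\,\dx=\int_0^{u^*}f(s)\,{\rm d}s>0$. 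Then $\phi(x-ct)$ is an entire solution of the Cauchy problem $v_t=d(J*v-v)+f(v)$; choosing a small compactly supported datum $v_0$ with $0<v_0\le\phi(\cdot-L)$ for $L$ large (possible as $\phi(-\yy)=u^*$) and using the comparison principle, $v(t,x)\le\phi(x-ct-L)$, so for every $\lm\in(0,u^*)$ the right edge of $\{x:v(t,x)\ge\lm\}$ advances at most at the linear rate $c$. This contradicts the known accelerated (super-linear) spreading of the monostable nonlocal Cauchy problem with a kernel having no exponential moment. A direct analysis of the profile is also possible — one first shows $\phi(x)\ge c_1\int_x^{\yy}J(y)\,\dy$ for large $x$ by bounding $(J*\phi)$ below and solving the linear ODE $c\phi'+d\phi=d(J*\phi)+f(\phi)$ by variation of constants, and then exploits the balance forced in \qq{2.2} near $+\yy$ to contradict $\hat J\equiv\yy$ — but the bookkeeping there (and the treatment of the sub-case in which even {\bf(J1)} fails) is delicate, which is why I would favour the dynamical argument; in either case, this is the technical heart of the proof.
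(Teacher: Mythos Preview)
The paper does not prove this proposition at all: it is quoted as \cite[Proposition 1.3]{DLZ} (itself relying on \cite{Ya}), and no argument is supplied beyond the citation. The paragraph immediately following the proposition in the paper is not a proof of the proposition but a separate, self-contained verification of an \emph{additional} property---that any such $\phi_c$ is strictly positive and strictly decreasing on $\mathbb{R}$---obtained by a short maximum-principle-style argument exploiting $J(0)>0$.

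Your sketch is the classical route to results of this type (characteristic equation $\mathcal P(\lm,c)=0$ to identify $c_*$, exponential sub/super-solutions plus monotone iteration via the integral reformulation for existence, and a spreading-speed obstruction for nonexistence), and is in line with what one finds in \cite{Ya} and the surrounding literature. A few places would need tightening in a full write-up: the ``only if'' direction for $c<c_*$ under {\bf(J2)} is the real content and your parenthetical defers it either to an Ikehara/Laplace decay analysis or to the identification of $c_*$ with the Cauchy-problem spreading speed, each of which is a theorem in its own right; the attainment of the infimum defining $c_*$ at an interior $\lm_*\in(0,\Lambda)$ (needed so that $\lm_*$ is a double root and your critical-case subsolution $x^+e^{-\lm_* x}$ works) should be argued, not assumed; and the accelerated-spreading input you invoke when {\bf(J2)} fails is again an external result. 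None of this is wrong, but since the present paper simply imports the proposition, there is nothing here to compare your argument against.
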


Furthermore, we have that the solution $\phi_c$ of \eqref{2.2} is positive and strictly decreasing in $\mathbb{R}$. In fact, if $\phi_c$ is equal to $0$ somewhere, then there are $x_0\in\mathbb{R}$ and small $\sigma>0$ such that $\phi_c(x_0)=0<\phi_c(x)$ in $(x_0-\sigma,x_0)$. Thanks to \eqref{2.2} and the condition {\bf(J)}, we see
\[0=\int_{-\yy}^{\yy}J(x_0,y)\phi(y)\dy=\int_{-\yy}^{\yy}J(y)\phi(y+x_0)\dy>0.\]
This contradiction implies $\phi_c(x)>0$ in $\mathbb{R}$. We next show the monotonicity of $\phi_c$. Clearly, there exist $a\in\mathbb{R}$ and $\delta_0>0$ such that $J(x)>0$ for $x\in[-\delta_0,\delta_0]$ and $\phi_c(x)$ is strictly decreasing in $[a,a+\delta_0]$.  To our aim, it is sufficient to prove that $\phi_c(x)$ is strictly decreasing in $[a+n\delta_0,a+(n+1)\delta_0]$ for any integer $n$. We only prove the case with $n\ge0$ since the other case can be handled by analogous arguments. Obviously, the conclusion holds for $n=0$, and by induction we assume that $\phi_c(x)$ is strictly decreasing in $[a+n\delta_0,a+(n+1)\delta_0]$. Assume that there exist $x_1$, $x_2$: $a+(n+1)\delta_0\le x_1<x_2\le a+(n+2)\delta_0$ such that $\phi_c(x_1)=\phi_c(x_2)$. Without loss of generality, we may suppose $\phi'_c(x_1)=\phi'_c(x_2)=0$. In view of \eqref{2.2}, we have
 \bess
0=\int_{-\yy}^{\yy}J(y)\left(\phi(y+x_1)-\phi_c(y+x_2)\right)\dy
\ge\int_{-\delta_0}^{\delta_0}J(y)\left(\phi(y+x_1)-\phi_c(y+x_2)\right)\dy>0
 \eess
by the continuities of $J$ and $\phi_c$ as well as the facts $J(-\delta_0)>0$ and $\phi_c(x_1-\delta_0)-\phi_c(x_2-\delta_0)>0$. This contradiction shows that $\phi_c$ is strictly decreasing in $[a+(n+1)\delta_0,a+(n+2)\delta_0]$.

 Consider the cauchy problem
 \bes\left\{\begin{aligned}\label{2.3}
&w_t= d\int_{-\yy}^{\yy}J(x,y)w(t,y)\dy-dw+f(w),\;\; t>0,\ x\in\mathbb{R},\\
&w(0,x)=\tilde u_0(x),  ~ |x|\le h_0; ~ ~ w(0,x)\equiv0, ~ |x|>h_0.
 \end{aligned}\right.\ees
It is well known that this problem has a unique global solution $w$, and $\lim_{t\to\yy}w(t,x)=u^*$ locally uniformly in $\mathbb{R}$.
We can study the spreading behavior of the problem \eqref{2.3} by discussing the level set
 \[E_{\lambda}(t):=\left\{x\in\mathbb{R}: w(t,x)=\lambda\right\} ~ ~ {\rm for ~ any ~ given ~}\lambda\in(0,u^*).\]
More precisely, define $x^+_{\lambda}(t):=\sup E_{\lambda}(t)$ and $x^-_{\lambda}(t):=\inf E_{\lambda}(t)$. If the condition {\bf(J2)} holds, one may have (see e.g. \cite{Wei}) that $\lim_{t\to\yy}\frac{|x^{\pm}_{\lambda}(t)|}{t}=c_*$ which is well known as the spreading speed of \eqref{2.3}; If the condition {\bf(J2)} does not hold, then $\lim_{t\to\yy}\frac{|x^{\pm}_{\lambda}(t)|}{t}=\yy$ which is usually called the accelerated spreading for \eqref{2.3}.

The authors of \cite{DLZ} proved that problem \eqref{2.3} is the limiting problem of \eqref{1.1} as $\mu\to\yy$. To stress the dependence of solution pair of \eqref{2.1} on $\mu$, we denote by $(c_{\mu},\phi_{\mu})$ the unique solution pair of \eqref{2.1}. Due to the properties of $\phi_{\mu}$, there is a unique $l_{\mu}>0$ such that $\phi_{\mu}(-l_{\mu})=u^*/2$. The limiting behavior of $(c_{\mu},\phi_{\mu})$ as $\mu\to\yy$ was also obtained in \cite{DLZ}.

\begin{proposition}[{\cite[Theorem 5.1 and Theorem 5.2]{DLZ}}]\label{p2.3} Suppose that the condition {\bf(F)} holds. If $J$ satisfies the condition {\bf(J2)}, then
\[c_{\mu}\to c_*, ~ ~ l_{\mu}\to\yy, ~ ~ \phi_{\mu}(x)\to0 ~ {\rm and } ~ \phi_{\mu}(x-l_{\mu})\to\phi_*(x) ~ ~ {\rm locally ~ uniformly ~ in} ~ \mathbb{R} ~ {\rm as} ~ \mu\to\yy,\]
where $(c_*,\phi_*)$ is the minimal speed solution pair of \eqref{2.2} with $\phi_*(0)=u^*/2$; If the condition {\bf(J1)} holds but the condition {\bf(J2)} does not hold, then $\lim_{\mu\to\yy}c_{\mu}=\yy$.
\end{proposition}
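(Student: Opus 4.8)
The plan is to analyse the family $(c_\mu,\phi_\mu)$ of solutions of \eqref{2.1} in three stages: (i) qualitative control of the speeds $c_\mu$ (monotonicity in $\mu$, and a uniform upper bound when {\bf(J2)} holds); (ii) a compactness argument for the re-centred profiles $\phi_\mu(\cdot-l_\mu)$; (iii) identification of the limit with a traveling wave of \eqref{2.2}. The key mechanism is that the constraint $c_\mu=\mu\int_{-\yy}^0 j(x)\phi_\mu(x)\dx$ built into \eqref{2.1} links the speed $c_\mu$ to the position $l_\mu$ of the transition layer of $\phi_\mu$.

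Stage (i). First I would show $\mu\mapsto c_\mu$ is strictly increasing: for $\mu_1<\mu_2$, the uniqueness in Proposition~\ref{p2.1} together with a sliding comparison between $\phi_{\mu_1}$ and $\phi_{\mu_2}$ on $(-\yy,0]$ — their boundary values $0$ at $x=0$ and $u^*$ at $x=-\yy$ agree, and the semi-wave operator enjoys a comparison principle inherited from \cite[Theorem 3.7]{LW21} — forces $c_{\mu_1}<c_{\mu_2}$; hence $\bar c:=\lim_{\mu\to\yy}c_\mu\in(0,\yy]$ exists and $c_\mu\ge c_1>0$ for $\mu\ge1$. When {\bf(J2)} holds, the crucial quantitative input is the a priori bound $c_\mu\le c_*$ for all $\mu$, obtained by comparison with a traveling wave of \eqref{2.2}: if $c_\mu\ge c_*$, Proposition~\ref{p2.2} supplies a nonincreasing wave $\phi_{c_\mu}$ of \eqref{2.2} of speed $c_\mu$; splitting $\int_\R J(x-y)\phi_{c_\mu}(y)\dy$ into $\int_{-\yy}^0$ and $\int_0^\yy$ shows $\phi_{c_\mu}|_{(-\yy,0)}$ is a strict supersolution of the semi-wave operator of the same speed, and a sliding argument against $\phi_\mu$ (using that {\bf(J2)} forces exponential approach of $u^*-\phi_\mu$ and $u^*-\phi_{c_\mu}$ to $0$ at $-\yy$) combined with the free-boundary identity yields a contradiction. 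I expect this comparison to be the main obstacle.

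Stage (ii)--(iii), case {\bf(J2)}. Given $c_\mu\le c_*$, I would first deduce $l_\mu\to\yy$. From the equation, $\phi_\mu'=-\tfrac1{c_\mu}\big(d\int_{-\yy}^0 J(x-y)\phi_\mu(y)\dy-d\phi_\mu+f(\phi_\mu)\big)$, so $|\phi_\mu'|$ is bounded uniformly in $\mu\ge1$; if $l_{\mu_n}$ were bounded for some $\mu_n\to\yy$, Arzel\`a--Ascoli would give a nonincreasing limit $\Phi$ of $\phi_{\mu_n}$ with $\Phi\not\equiv0$ (indeed $\Phi=u^*/2$ at $-\lim l_{\mu_n}$), so $\int_{-\yy}^0 j\,\phi_{\mu_n}\dx$ would stay bounded below by a positive constant and $c_{\mu_n}=\mu_n\int_{-\yy}^0 j\,\phi_{\mu_n}\dx\to\yy$, contradicting $c_{\mu_n}\le c_*$. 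Now put $\psi_\mu(x)=\phi_\mu(x-l_\mu)$, which solves $d\int_{-\yy}^{l_\mu}J(x-y)\psi_\mu(y)\dy-d\psi_\mu+c_\mu\psi_\mu'+f(\psi_\mu)=0$ on $(-\yy,l_\mu)$, is nonincreasing with values in $[0,u^*]$, and satisfies $\psi_\mu(0)=u^*/2$. Along a subsequence $\psi_{\mu_n}\to\psi$ in $C_{loc}(\R)$ and $c_{\mu_n}\to\bar c\le c_*$; since $l_{\mu_n}\to\yy$, passing to the limit (dominated convergence for the convolution, the uniform bound on $\psi_\mu'$ for the derivative term) shows $\psi$ is a nonincreasing $C^1$ solution of \eqref{2.2} of speed $\bar c$ with $\psi(0)=u^*/2$, and examining the equation as $x\to\pm\yy$ together with {\bf(F)} gives $\psi(-\yy)=u^*$, $\psi(\yy)=0$. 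By Proposition~\ref{p2.2} such a wave exists only for $\bar c\ge c_*$, so $\bar c=c_*$; since this holds for every subsequence, $c_\mu\to c_*$, and by uniqueness of the minimal-speed wave up to translation the normalisation $\psi(0)=u^*/2$ identifies $\psi=\phi_*$, so $\phi_\mu(\cdot-l_\mu)\to\phi_*$ locally uniformly. Finally, for fixed $R>0$ and $\ep>0$ choose $M$ with $\phi_*(M)<\ep$; for large $\mu$, $l_\mu-R>M$ and $\psi_\mu(M)<\ep$, so monotonicity gives $\phi_\mu(x)=\psi_\mu(x+l_\mu)\le\psi_\mu(l_\mu-R)\le\psi_\mu(M)<\ep$ for all $x\in[-R,0]$, i.e.\ $\phi_\mu\to0$ locally uniformly in $\R$.

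Case {\bf(J1)} without {\bf(J2)}. Here I would argue by contradiction: if $c_\mu$ were bounded, the bound on $\phi_\mu'$ (which uses only $c_\mu\ge c_1>0$) and the argument above again force $l_\mu\to\yy$, and re-centring and passing to the limit as above produces a nonincreasing solution of \eqref{2.2} with a finite speed and limits $u^*$ at $-\yy$ and $0$ at $\yy$, contradicting the final assertion of Proposition~\ref{p2.2}. Hence $c_{\mu_n}\to\yy$ along every $\mu_n\to\yy$, i.e.\ $\lim_{\mu\to\yy}c_\mu=\yy$.
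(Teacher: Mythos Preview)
The paper does not prove this proposition; it is quoted verbatim as \cite[Theorem~5.1 and Theorem~5.2]{DLZ} and no argument is given. So there is no in-paper proof against which to compare your proposal.

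That said, your three-stage plan---monotonicity and an a priori bound $c_\mu\le c_*$ on the speeds, then $l_\mu\to\yy$ via the constraint $c_\mu=\mu\int_{-\yy}^{0}\int_0^\yy J(x,y)\phi_\mu(x)\,\dy\dx$, then compactness of the re-centred profiles and identification of the limit with a wave of \eqref{2.2}---is the natural route and is essentially how \cite{DLZ} proceeds. Two points deserve attention. First, the sliding comparison you sketch for $c_\mu\le c_*$ is the genuine technical crux (as you note): the traveling wave $\phi_{c}$ of \eqref{2.2} has $\phi_c(0)>0$ while $\phi_\mu(0)=0$, so they do not share boundary data at $x=0$, and making the sliding rigorous requires controlling the tails at $-\yy$ (your appeal to exponential decay under {\bf(J2)} is the right ingredient, but it has to be established for both $\phi_\mu$ and $\phi_c$); an alternative, and arguably cleaner, route is to extend $\phi_\mu$ by $0$ for $x>0$, observe it is a generalized subsolution of the full-line problem at speed $c_\mu$, and compare with traveling waves of \eqref{2.2} via the Cauchy problem. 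Second, to conclude $\psi=\phi_*$ you invoke ``uniqueness of the minimal-speed wave up to translation''; this is true but is not part of Proposition~\ref{p2.2} as stated, so you should cite it separately (it is in \cite{Ya}). The remaining steps---the contradiction giving $l_\mu\to\yy$, the passage to the limit in the integrated equation via dominated convergence, and the treatment of the {\bf(J1)}-without-{\bf(J2)} case by the same compactness leading to a forbidden wave---are all sound.
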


This proposition shows that if {\bf(J2)} holds, the spreading speed of free boundary of \eqref{1.1} converges to the spreading speed of \eqref{2.3} as $\mu\to\yy$. Now we give the behavior of $(c_{\mu},\phi_{\mu})$ as $\mu\to0$.
\begin{theorem}\label{t2.2} Suppose that the conditions {\bf(F)} and {\bf(J1)} hold. Then
\bess
&&\dd\lim_{\mu\to 0}c_{\mu}=0, ~ ~ \lim_{\mu\to 0}\phi_{\mu}(x)\to U(x) ~  {\rm locally ~ uniformly ~ in} ~ (-\yy,0), \\
&&\left\{\begin{aligned}
&\dd\lim_{\mu\to 0}l_{\mu}=0& &{\rm if} ~ U(0)\ge {u^*}/2,\\
&\dd\lim_{\mu\to 0}l_{\mu}=x_0& &{\rm if} ~ U(0)< {u^*}/2,  ~ {\rm with } ~ x_0<0 ~ {\rm satisfying}\;  U(x_0)={u^*}/2,
\end{aligned}\right.\\
&&\dd\lim_{\mu\to 0}\phi_{\mu}(x-l_{\mu})=U(x-x_0) ~ ~ {\rm locally ~ uniformly ~ in} ~ (-\yy,x_0),
\eess
where $U(x)$ is the unique bounded positive solution of
 \bess
  d\dd\int_{-\yy}^{0}J(x,y)U(y)\dy-dU+f(U)=0 {\rm ~ ~ ~ in ~ }\;(-\yy,0].
  \eess
\end{theorem}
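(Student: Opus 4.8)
The plan is to squeeze $\phi_\mu$ between two barriers built from $U$ and then to read off the level‑set statements. Throughout I use that, by Proposition~\ref{p2.1}, $\phi_\mu$ is nonincreasing with $\phi_\mu(-\yy)=u^*$, hence $0\le\phi_\mu\le u^*$. The claim $c_\mu\to0$ is the easy part: since $\int_0^{\yy}J(x,y)\,dy=\int_{-\yy}^{x}J(s)\,ds$, Fubini together with $\phi_\mu\le u^*$ gives $0<c_\mu=\mu\int_{-\yy}^{0}\big(\int_{-\yy}^{x}J(s)\,ds\big)\phi_\mu(x)\,dx\le\mu u^*\int_{0}^{\yy}sJ(s)\,ds$, which is finite by {\bf(J1)}; so $c_\mu\le C\mu\to0$. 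Next I would record the properties of $U$ needed below: $U$ exists and is unique, $0<U(0)\le U\le u^*$, $U(-\yy)=u^*$, and $U$ is strictly decreasing on $(-\yy,0]$. Existence comes from the sub--super solution method with the constant $u^*$ as a supersolution (note $d\int_{-\yy}^{0}J(x,y)u^*\,dy-du^*=du^*\big(\int_{-\yy}^{0}J(x,y)\,dy-1\big)\le0$) and $\eta\psi$ (with $\psi>0$ a principal eigenfunction of the linearised nonlocal operator on a sufficiently long bounded interval, extended by $0$, and $\eta>0$ small) as a subsolution; uniqueness and $U\le u^*$ use the strict monotonicity of $f(u)/u$; the strict monotonicity of $U$ and $U(-\yy)=u^*$ follow from the equation for $U$ by the interval‑induction argument already used above for the solution of \eqref{2.2}.

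For the upper barrier: $U$ being nonincreasing gives $c_\mu U'\le0$, so $U$ is a supersolution of \eqref{2.1} with speed $c_\mu$; comparing it with $\phi_\mu$ (both extended by $0$ on $[0,\yy)$, with $U(-\yy)=u^*=\phi_\mu(-\yy)$) by the sliding argument $\tau^\ast:=\inf\{\tau\ge1:\tau U\ge\phi_\mu\text{ on }(-\yy,0]\}$ — which closes because strict monotonicity of $f(u)/u$ makes $\tau^\ast U$ a strict supersolution whenever $\tau^\ast>1$ — yields $\phi_\mu\le U$ on $(-\yy,0]$ for every $\mu>0$. For the lower barrier, fix small $\ep,\de>0$ and set $\underline\phi(x)=(1-\ep)U(x+\de)$ for $x\le-\de$ and $\underline\phi(x)=0$ for $-\de<x\le0$. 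Strict monotonicity of $f(u)/u$ makes $(1-\ep)U$ a strict subsolution of the $U$‑equation with uniform gap $\eta_0(\ep):=\min_{v\in[U(0),u^*]}\big(f((1-\ep)v)-(1-\ep)f(v)\big)>0$; shifting by $\de$ preserves this (with $U\equiv0$ on $(0,\yy)$), the truncation on $(-\de,0)$ costs only $O(\de)$ in the nonlocal term, and the drift term is $O(c_\mu)$, so for $\mu$ small (depending on $\ep,\de$) $\underline\phi$ is a generalised (downward‑jump) subsolution of \eqref{2.1} with speed $c_\mu$; since $\underline\phi\equiv0$ on $(-\de,0]$ the binding constraint in the sliding comparison lies only on $(-\yy,-\de]$, and it yields $\phi_\mu(x)\ge(1-\ep)U(x+\de)$ for $x\le-\de$. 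Letting $\mu\to0$, then $\de\to0$, then $\ep\to0$ gives $\liminf_{\mu\to0}\phi_\mu\ge U$ on $(-\yy,0)$; with $\phi_\mu\le U$ and the uniform continuity of $U$ on compacts, this upgrades to $\phi_\mu\to U$ locally uniformly in $(-\yy,0)$. (Non‑degeneracy of the limit could alternatively be obtained from the monotone dependence of $(c_\mu,\phi_\mu)$ on $\mu$, giving $\lim_{\mu\to0}\phi_\mu\ge\phi_{\mu_0}>0$.)

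For the level set, $-l_\mu$ is the unique point with $\phi_\mu(-l_\mu)=u^*/2$. If $U(0)\ge u^*/2$ then, $U$ being strictly decreasing, $U>u^*/2$ on $(-\yy,0)$; using $\phi_\mu\to U$ uniformly on $[-K,-b]$ together with the lower barrier on $(-\yy,-K]$ (which exceeds $u^*/2$ once $K$ is large), one gets $\phi_\mu>u^*/2$ on $(-\yy,-b]$ for $\mu$ small, hence $l_\mu<b$, so $l_\mu\to0$. If $U(0)<u^*/2$, let $x_0<0$ be the unique point with $U(x_0)=u^*/2$; uniform convergence on a compact neighbourhood of $x_0$, together with the strict monotonicity of $U$ and of $\phi_\mu$, localises the $u^*/2$‑level point of $\phi_\mu$ near $x_0$, so $-l_\mu\to x_0$. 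In either case, combining with $\phi_\mu\to U$ locally uniformly in $(-\yy,0)$ gives $\phi_\mu(\cdot-l_\mu)\to U(\cdot-x_0)$ locally uniformly where $U$ is defined (with the convention $x_0=0$ in the first case).

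The step I expect to be the main obstacle is the construction of the lower barrier and the comparison against it. Because $\phi_\mu$ is pinned to $0$ at the free‑boundary point $x=0$ while $U(0)$ may be strictly positive, no honest positive multiple or translate of $U$ can stay below $\phi_\mu$ up to $0$, so the barrier must be truncated to $0$ on a shrinking interval $(-\de,0)$; checking that this discontinuous function is still a (generalised) subsolution of the nonlocal problem \eqref{2.1} — controlling the downward jump at $-\de$, the mass lost from the nonlocal term by the truncation, and the drift term — and that the sliding comparison applies to such generalised sub/supersolutions is where the care is needed. The remaining ingredients, namely the bound $c_\mu\le C\mu$, the qualitative properties of $U$, and the intermediate‑value argument for $l_\mu$, are comparatively routine.
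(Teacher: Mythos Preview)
Your barrier strategy is quite different from the paper's proof, which never constructs comparison functions from $U$. The paper argues by compactness: it takes any sequence $\mu_n\to0$, uses Helly's theorem (from $\phi_{\mu_n}'<0$ and $0\le\phi_{\mu_n}\le u^*$) to extract an a.e.\ pointwise limit $\phi_\infty$, passes to the limit in the \emph{integrated} semi-wave equation by dominated convergence, and shows that any such $\phi_\infty$ either vanishes identically or coincides with $U$ on $(-\infty,0)$. The trivial limit is then excluded by contradiction: if $\phi_\infty\equiv0$ then $l_{\mu_n}\to\infty$, and Helly applied to the recentred profiles $\phi_{\mu_n}(\cdot-l_{\mu_n})$ produces a nonincreasing solution of the whole-line problem \eqref{2.2} with speed $c=0$ and value $u^*/2$ at the origin, contradicting Proposition~\ref{p2.2}. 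Your upper barrier $\phi_\mu\le U$ is correct (and sharper than anything the paper records), and the level-set arguments at the end are fine; but the lower barrier, which you rightly flag as the crux, does not close as written.

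The specific obstruction is the boundary touching $x_0=-\de$ in your sliding argument. Suppose $\theta^\ast<1$ and $\phi_\mu(-\de)=\theta^\ast(1-\ep)U(0)$. Substituting $\phi_\mu\ge\theta^\ast\underline\phi$ into the $\phi_\mu$-equation at $-\de$ gives
\[
0\;\ge\;\big[f(\theta^\ast(1-\ep)U(0))-\theta^\ast(1-\ep)f(U(0))\big]\;+\;d\!\int_{-\de}^{0}\!J(-\de,y)\phi_\mu(y)\,dy\;+\;c_\mu\phi_\mu'(-\de),
\]
where the bracket is uniformly positive and the middle integral is merely $O(\de)$. The minimum condition at the endpoint $-\de$ only yields the one-sided inequality $\phi_\mu'(-\de)\le\theta^\ast(1-\ep)U'(0^-)$, i.e.\ an \emph{upper} bound on $c_\mu\phi_\mu'(-\de)$, whereas a contradiction requires a \emph{lower} bound showing this drift term is $o(1)$; the $\phi_\mu$-equation itself only gives $|c_\mu\phi_\mu'|\le 2du^*+\max|f|$, which is not small. (At an interior touching point the two-sided minimum condition gives $\phi_\mu'(x_0)=\theta^\ast(1-\ep)U'(x_0+\de)$ and hence $c_\mu\phi_\mu'(x_0)=O(c_\mu)$, but this already presupposes $U\in C^1$ with $\sup|U'|<\infty$, which does not follow from {\bf(J)} alone.) Your parenthetical alternative via monotone dependence of $\phi_\mu$ on $\mu$ would bypass all this, but that monotonicity is neither proved in the paper nor obvious. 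The paper's compactness-plus-Proposition~\ref{p2.2} route avoids both the regularity question for $U$ and the boundary-touching issue entirely.
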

\begin{proof}Without loss of generality, we choose a sequence $\mu_n\to0$ and rewrite $(c_{\mu_n},\phi_{\mu_n},l_{\mu_n})$ as $(c_n,\phi_n,l_n)$. Obviously,
\[0\le c_n\le\mu_n u^*\int_{-\yy}^{0}\int_{0}^{\yy}J(x,y)\dy\dx\to0 ~ ~ ~ {\rm as } ~ n\to\yy.\]
Since $\phi'_n<0$ and $0\le\phi_n\le u^*$, by Helly's theorem, there are a nonincreasing function $\phi_{\yy}$ with $0\le\phi_{\yy}\le u^*$ and a subsequence of $\phi_n$, still denoted by itself, such that $\phi_n\to\phi_{\yy}$ almost everywhere in $(-\yy,0]$ and $\phi_{\yy}(0)=0$. For any $x<0$, it can be seen from \eqref{2.1} that
\bess
c_n\phi_n(x)-c_n\phi_n(0)=-\int_{0}^{x}\left(d\dd\int_{-\yy}^{0}\!J(z,y)\phi_n(y)\dy-d\phi_n(z)+f(\phi_n(z))\right){\rm d}z.
\eess
By virtue of the dominated convergence theorem, we have
\[0=-\int_{0}^{x}\left(d\dd\int_{-\yy}^{0}\!J(z,y)
 \phi_\yy(y)\dy-d\phi_\yy(z)+ f(\phi_\yy(z))\right){\rm d}z.\]
 Thus, differentiate the above equation to yield
 \bes\label{2.p}
 d\dd\int_{-\yy}^{0}\!J(x,y)\phi_\yy(y)\dy-d\phi_\yy+f(\phi_\yy)=0 ~ ~ {\rm in~ } (-\yy,0).
 \ees
We here claim that there are only two possible cases for $\phi_\yy$, namely, Case 1: $\phi_{\yy}\equiv0$ in $(-\yy,0]$, and Case 2: $\phi_{\yy}(x)=U(x)$ for $x<0$.
 Since $\phi_{\yy}$ is nonincreasing, the discontinuous points of $\phi_{\yy}$ are almost countable. If $\phi_{\yy}(x)=0$ for all continuous points of $\phi_{\yy}$, then $\phi_{\yy}\equiv0$ in $(-\yy,0]$. Otherwise, we can define
 \[x_*=\sup\{x<0: \phi_{\yy} ~ {\rm is ~ continuous ~ in ~} x ~ {\rm and ~ } \phi_{\yy}(x)>0\}.\]
If $x_*<0$, then $\phi_{\yy}\equiv0$ in $(x_*,0]$. Taking $x\to x_*^+$ in \eqref{2.p} leads to
 $\int_{-\yy}^{0}\!J(x_*,y)\phi_\yy(y)\dy=0$,
 which contradicts to the definition of $x_*$. Hence $x_*=0$. Since $\phi_{\yy}$ is bounded and nonincreasing in $(-\yy,0]$, by letting $x\to0^-$ in \eqref{2.p} we have
 \[d\dd\int_{-\yy}^{0}\!J(y)\phi_\yy(y)\dy-d\phi_\yy(0^-)+f(\phi_\yy(0^-))=0\]
with $\phi_{\yy}(0^-):=\lim_{x\to0^-}\phi_\yy(x)$. Due to the definition of $x_*$, conditions {\bf(F)} and {\bf(J)}, we obtain $\phi_{\yy}(0^-)>0$. This together with the monotonicity of $\phi_\yy$ yields that $\phi_{\yy}(x)\ge\phi_{\yy}(0^-)$ for $x<0$. Define $\hat \phi_{\yy}(x)=\phi_{\yy}(x)$ for $x<0$ and $\hat \phi_{\yy}(0)=\phi_{\yy}(0^-)$. Clearly, $\hat{\phi}_{\yy}$ satisfies \eqref{2.p} in $(-\yy,0]$ and $\phi_{\yy}(0^-)\le\hat{\phi}_{\yy}\le u^*$. By \cite[Lemma 2.4]{LW21}, one immediately derives $\hat{\phi}_{\yy}\equiv U$ in $(-\yy,0]$. Thus our claim is proved.

Now we show that Case 1 cannot happen. Define $\tilde{\phi}_n(x)=\phi_n(x-l_n)$ for $x\le l_n$ and $\tilde{\phi}_n(x)=0$ for $x>l_n$. By Helly's theorem, there exist a nonincreasing function $\tilde \phi_{\yy}$ with $0\le\tilde\phi_{\yy}\le u^*$ and a subsequence of $\tilde \phi_n$, still denoted by itself, such that $\tilde \phi_n\to\tilde\phi_{\yy}$ almost everywhere in $\mathbb{R}$ and $\tilde\phi_{\yy}(0)=u^*/2$. Moreover, it is easy to see from the monotonicity of $\phi_n$ and $\phi_{\yy}\equiv0$ that $\phi_n(x)\to\phi_{\yy}(x)$ locally uniformly in $(-\yy,0]$, which implies $l_n\to\yy$. Therefore, for any $x\in\mathbb{R}$, it follows from \eqref{2.1} that for large $n$,
 \bess
 c_n\tilde\phi_n(0)-c_n\tilde\phi_n(x)=-\int_{0}^{x}\left(d\dd\int_{-\yy}^{l_n}\!J(z,y)\tilde\phi_n(y)\dy-d\tilde\phi_n(z)+f(\tilde\phi_n(z))\right){\rm d}z.
 \eess
Using  the dominated convergence theorem, we have
\[\int_{0}^{x}\left(d\dd\int_{-\yy}^{\yy}\!J(z,y)\tilde\phi_\yy(y)\dy-d\tilde\phi_\yy(z)+f(\tilde\phi_\yy(z))\right){\rm d}z=0,\]
which leads to
\[d\dd\int_{-\yy}^{\yy}\!J(x,y)\tilde\phi_\yy(y)\dy-d\tilde\phi_\yy(x)+f(\tilde\phi_\yy(x))=0 ~ ~ ~ {\rm in }~ ~ \mathbb{R}.\]
Since $\tilde\phi_\yy$ is nonincreasing, $0\le\tilde\phi_{\yy}\le u^*$ and $\tilde\phi_{\yy}(0)=u^*/2$, we easily show that $\tilde\phi_\yy(-\yy)=u^*$ and $\tilde\phi_\yy(\yy)=0$, which implies that \eqref{2.2} has a solution $\tilde{\phi}_\yy$ with speed $c=0$. Clearly, this is a contradiction to Proposition \ref{p2.2}. Thus Case 1  can not occur.

It follows from the monotonicity of $\phi_n$ and continuity of $U$ that $\phi_n\to U$ locally uniformly in $(-\yy,0)$ as $n\to\yy$. Now we investigate the limit of $l_n$. We first show that $U$ is strictly decreasing in $(-\yy,0]$. In fact, it follows from \cite[Lemma 2.4]{LW21} that $U$ is nonincreasing in $(-\yy,0]$. By the condition  {\bf(J)}, there is a $\delta_0>0$ such that $J>0$ in $[-\delta_0,\delta_0]$. As before, it suffices to show that $U$ is strictly decreasing in $[-(k+1)\delta_0,-k\delta_0]$ for each integer $k\ge0$. If there exist $-\delta_0\le x_1<x_2\le 0$ such that $U(x_1)=U(x_2)$. Then we have, from the equation satisfied by $U$,
\bess
\int_{-\yy}^{-x_2}\!J(y)U(y+x_2)\dy=\left\{\int_{-x_2}^{-x_1}+\int_{-\yy}^{-x_2}\right\}
J(y)U(y+x_1)\dy>\int_{-\yy}^{-x_2}\!J(y)U(y+x_2)\dy.
\eess
This contradiction implies that $U$ is strictly decreasing in $[-\delta_0,0]$. Arguing inductively, assume that $U$ is strictly decreasing in $[-(k+1)\delta_0,-k\delta_0]$. If there exist $-(k+2)\delta_0\le x_1<x_2\le-(k+1)\delta_0$ such that $U(x_1)=U(x_2)$. Similarly to the above, we have
\bess
0&=&\int_{-\yy}^{-x_1}\!J(y)U(y+x_1)\dy-\int_{-\yy}^{-x_2}\!J(y)U(y+x_2)\dy\\
 &\ge&\int_{-\yy}^{-x_2}\!J(y)\left(U(y+x_1)-U(y+x_2)\right)\dy\\
 &\ge&\int_{-\delta_0}^{\delta_0}\!J(y)\left(U(y+x_1)-U(y+x_2)\right)\dy>0
\eess
since $J(\delta_0)>0$ and $U(x_1+\delta_0)-U(x_2+\delta_0)>0$. This is a contradiction.

Now we deal with the case $U(0)\ge u^*/2$. Assume that $\lim_{n\to\yy}l_n\ne0$. Then there are $\ep_0\in(0,\delta_0)$ and a subsequence of $l_n$, still denoted by itself, such that $l_n\ge\ep_0>0$. Thus, $u^*/2=\phi_n(-l_n)\ge\phi_n(-\ep_0)\to U(-\ep_0)>U(0)\ge u^*/2$ as $n\to\yy$.  This is a contradiction. For the case $U(0)<u^*/2$, clearly, there is a unique $x_0<0$ such that $U(x_0)=u^*/2$. If the assertion is not true, one can find a small $\ep>0$ and a subsequence of $l_n$, still denoted by itself, such that $l_n\le x_0-\ep$ or $l_n\ge x_0+\ep$. We only discuss the former case since their proofs are similar. By monotonicity, $u^*/2=\phi_n(-l_n)\le\phi_n(-x_0+\ep)\to U(-x_0+\ep)<u^*/2$ as $n\to\yy$. We get a contradiction.

We may argue as in the above analysis to show the limit of $\phi_n(x-l_n)$, and the details are omitted here. By the arbitrariness of sequence $\mu_n$, the proof is finished.
\end{proof}

\begin{remark}\label{r2.3}We claim that $U(0)>u^*/2$ when $d$ is small, and $U(0)<u^*/2$ when $d$ is large. Actually, we can examine the behaviors of $U$ as $d$ approaches $0$ and $\yy$, respectively. To stress the dependence of $U$ on $d$, we rewrite $U$ as $U_d$. It follows from the equation of $U_d$ and $0<U_d(x)<u^*$ in $(-\yy,0]$ that for any $0<d_1<d_2$,
\[ d_2\dd\int_{-\yy}^{0}J(x,y)U_{d_1}(y)\dy-d_2U_{d_1}+f(U_{d_1})<0 {\rm ~ ~ ~ in ~ }\;(-\yy,0].\]
Then we can obtain $U_{d_1}(x)\ge U_{d_2}(x)$ for $x\le0$ by using similar methods in the proof of {\rm\cite[Lemma 2.4]{LW21}}. Combining with $0<U_d(x)<u^*$, we can define $U_0(x):=\lim_{d\to0^+}U_{d}(x)$ and $U_\yy(x):=\lim_{d\to\yy}U_{d}(x)$ for $x\le0$. Clearly, $0\le U_\yy(x)\le U_0(x)\le u^*$. By the equation of $U_d$, it is easy to see that $U_\yy(x)\equiv0$ and $U_0(x)\equiv u^*$ in $(-\yy,0]$. Thus our claim holds.
 \end{remark}

 \begin{remark}\label{r2.4}We remark that when $\hat f(q)$ takes the form $q(a-bq)$ with $a,b>0$, the solution pair $(k_0,q_{k_0})$ of \eqref{1.5} shares the analogous behaviors with those in Proposition \ref{p2.3} and Theorem \ref{t2.2} as $\mu$ converges to $0$ and $\yy$, respectively. Denote the unique solution pair of \eqref{1.5} with $\hat f(q)=q(a-bq)$ by $(k_{\mu},q_{\mu})$. Then by \cite[Proposition 3.1]{DG} and some simple analysis, one has the following results:

\sk{\rm(1)}\, $\lim_{\mu\to\yy}k_{\mu}=2\sqrt{ad}$, $\lim_{\mu\to\yy}\ell_{\mu}=\yy$, and  $\lim_{\mu\to\yy}q_{\mu}(x)=0$ in $C^2_{loc}(\mathbb{R}^+)$, $\lim_{\mu\to\yy}q_{\mu}(x+\ell_{\mu})=q_{\yy}(x)$ in $C^2_{loc}(\mathbb{R})$, where $\ell_{\mu}>0$ is uniquely determined by $q_{\mu}(\ell_{\mu})=\frac{a}{2b}$, and $q_{\yy}$ is a solution of
    \bess
 dq''-2\sqrt{ad} q'+q(a-bq)=0, ~ ~ x\in\mathbb{R}; ~ ~ ~ q(-\yy)=0, ~ q(\yy)=\frac{a}{b}, ~ ~ q'(x)\ge0 ~ {\rm for} ~ x\in\mathbb{R};
 \eess

\sk{\rm(2)}\, $\lim_{\mu\to0}k_{\mu}=\lim_{\mu\to0}\ell_{\mu}=0$, $\lim_{\mu\to0}q_{\mu}(x)=p(x)$ in $C^2_{loc}(\mathbb{R}^+)$, where $p(x)$ is the unique positive solution of
  \bess
 &dp''+p(a-bp)=0, ~ ~ x>0; ~ ~ ~ p(0)=0.
 \eess
 \end{remark}

Consider the problem
 \bes\left\{\begin{aligned}\label{2.4}
&V_t= d\int_{0}^{\yy}J(x,y)\big(V(t,y)-V(t,x)\big)\dy+f(V), \;\; t>0,\ x\in \overline{\mathbb{R}}^+,\\
&V(0,x)=u_0(x), ~ x\in[0,h_0]; ~ ~ ~ V(0,x)\equiv 0, ~  x\ge h_0,
 \end{aligned}\right.\ees
and define $\tilde{E}_{\lambda}(t):=\big\{x\in\overline{\mathbb{R}}^+: V(t,x)=\lambda\big\}$, $\tilde {x}^+_{\lambda}(t)=\sup \tilde{E}_{\lambda}(t)$ and $\tilde{x}^-_{\lambda}(t)=\inf \tilde{E}_{\lambda}(t)$ for $\lambda\in(0,u^*)$. We now show that \eqref{2.4} has the same spreading speed with \eqref{2.3}.

\begin{proposition}\label{p2.4} Suppose that $f$ satisfies the condition {\bf(F)}. We have the following conclusions.

\sk{\rm(1)}\, If the condition {\bf(J2)} is satisfied, then $\lim_{t\to\yy}\frac{\tilde{x}^{\pm}_{\lambda}(t)}{t}=c_*$.

\sk{\rm(2)}\, If the condition {\bf(J2)} is violated, then $\lim_{t\to\yy}\frac{\tilde{x}^{\pm}_{\lambda}(t)}{t}=\yy$.
 \end{proposition}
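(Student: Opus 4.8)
The plan is to compare problem \eqref{2.4} with the Cauchy problem \eqref{2.3}, whose spreading speed is known (it equals $c_*$ under {\bf(J2)} and is infinite otherwise, as recalled in the excerpt). The key structural observation is that the operator $d\int_0^\yy J(x,y)(V(t,y)-V(t,x))\dy = d\int_0^\yy J(x,y)V(t,y)\dy - dj(x)V(t,x)$ in \eqref{2.4} differs from the full-line operator in \eqref{2.3} only through the restriction of the integral to $\overline{\mathbb{R}}^+$; since $V\ge0$, we have $\int_0^\yy J(x,y)V(t,y)\dy \le \int_{-\yy}^\yy J(x,y)V(t,y)\dy$ and $j(x)=\int_0^\yy J(x,y)\dy\le 1$. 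So the first step is to set up the comparison: one shows that the solution $V$ of \eqref{2.4} is a subsolution of a suitable problem dominated by $w$ (giving the upper bound on $\tilde x^+_\lambda(t)$), and, for the lower bound, that $V$ dominates the solution of a truncated-kernel Cauchy problem whose speed can be made arbitrarily close to $c_*$ (resp. arbitrarily large when {\bf(J2)} fails).

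More concretely, for the upper estimate I would fix a large $T$ and note that $V(t,x)\le w_{\text{ext}}(t,x)$ where $w_{\text{ext}}$ solves the full-line Cauchy problem \eqref{2.3} with the same initial data extended by zero — this follows from the comparison principle for nonlocal equations because dropping the negative half-line from the diffusion integral and replacing $dj(x)$ by $d$ both work in the subsolution direction (here I use $j(x)\le 1$ and $f(u)/u$ decreasing so that $-d(1-j(x))V + $ [extra diffusion gain] has the right sign; care is needed and this is the delicate sign bookkeeping). Then $\limsup_{t\to\yy}\tilde x^+_\lambda(t)/t \le c_*$ under {\bf(J2)}. For the lower estimate, since spreading happens for \eqref{2.4} (one first checks $V(t,x)\to u^*$ locally uniformly, using that $f$ satisfies {\bf(F)} and the standard nonlocal-logistic argument, exactly as for \eqref{2.3}), I would use the truncated kernels $J_n=\xi(x/n)J(x)$ introduced in the proof of Theorem \ref{t2.1}: for $J_n$ compactly supported the problem on $\overline{\mathbb{R}}^+$ has a genuine positive spreading speed $c_*^{(n)}$, and $V$ dominates the corresponding solution; by monotone convergence $c_*^{(n)}\nearrow c_*$ (resp. $c_*^{(n)}\to\yy$ when {\bf(J2)} fails, via \cite[Proposition 5.1]{DN212} or the argument in \cite[Section 4]{DLZ}). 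Passing $n\to\yy$ gives $\liminf_{t\to\yy}\tilde x^\pm_\lambda(t)/t \ge c_*$ (resp. $=\yy$).

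A secondary technical point is the transition from statements about $w$ on the whole line to statements about the half-line problem near $x=0$: because $V$ is only defined for $x\ge0$ and the fixed boundary at $0$ is "no-flux"-like (the species does not jump left), one must verify that near $x=0$ the solution still climbs to $u^*$ and that $\tilde x^-_\lambda(t)$ behaves symmetrically — here the left level set $\tilde x^-_\lambda(t)$ actually measures how the level set near the origin recedes, and its $|\cdot|/t$ limit being $c_*$ (resp. $\yy$) follows because once $V\approx u^*$ on $[0,ct]$ for every $c<c_*$ the set $\tilde E_\lambda(t)$ must lie beyond $ct$. I would phrase this via the already-available fact (analogue of Theorem \ref{t2.1}) that $V\to u^*$ uniformly on $[0,ct]$.

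The main obstacle I expect is the comparison-principle bookkeeping for the upper bound: one must build an explicit supersolution on $\overline{\mathbb{R}}^+$ out of the full-line solution $w$ (or its translate/level sets) so that the missing left-half diffusion and the factor $j(x)\le1$ versus $1$ are both exploited in the correct direction simultaneously, and then convert a bound on $w$ into a bound on the level set $\tilde x^+_\lambda(t)$. Everything else — existence and convergence to $u^*$, the truncated-kernel monotone approximation, and the $n\to\yy$ limit — is routine given the tools already developed in Sections 2 of this paper and in \cite{DLZ,DN212}, so the write-up can cite those and focus on the comparison step.
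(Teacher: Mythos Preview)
Your upper-bound argument has a genuine sign error. You want to show $V\le w_{\rm ext}$ by checking that $V$ (extended by $0$ to $x<0$) is a subsolution of the full-line equation \eqref{2.3}. But extending by zero makes $\int_{-\yy}^{\yy}J(x,y)V\,\dy=\int_0^{\yy}J(x,y)V\,\dy$, so the only difference between the two operators is the coefficient $dj(x)$ versus $d$. Since $j(x)\le 1$, replacing $-dj(x)V$ by $-dV$ \emph{decreases} the right-hand side; hence
\[
V_t=d\!\int_{-\yy}^{\yy}\!J(x,y)V\,\dy-dj(x)V+f(V)\ \ge\ d\!\int_{-\yy}^{\yy}\!J(x,y)V\,\dy-dV+f(V),
\]
which makes $V$ a \emph{supersolution}, not a subsolution, of the full-line equation on $x\ge0$. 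Equivalently, checking whether $w_{\rm ext}$ is a supersolution of \eqref{2.4} requires $\int_{-\yy}^{0}J(x,y)w_{\rm ext}(t,y)\,\dy\ge w_{\rm ext}(t,x)\int_{-\yy}^{0}J(x,y)\,\dy$, i.e.\ a weighted bound $w_{\rm ext}(t,\cdot)|_{y<0}\ge w_{\rm ext}(t,x)$, which fails for a non-monotone Cauchy solution with data supported in $[0,h_0]$. So the ``delicate sign bookkeeping'' is not merely delicate; the comparison as written goes the wrong way.

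The paper fixes this by using a \emph{monotone} supersolution: it takes $\bar V(t,x)=K\phi_{c_*}(x-c_*t)$ with $K\ge1$ large enough that $\bar V(0,\cdot)\ge V(0,\cdot)$. Because $\phi_{c_*}$ is strictly decreasing, one has $\bar V(t,y)\ge\bar V(t,x)$ for $y\le0\le x$, and the boundary term $\int_{-\yy}^0 J(x,y)\bar V(t,y)\,\dy-(1-j(x))\bar V(t,x)\ge0$ has the right sign; the KPP property $Kf(u)\ge f(Ku)$ handles the nonlinearity. This directly yields $\tilde x_\lambda^+(t)\le y_0+c_*t$. For the lower bound the paper does not truncate the kernel but instead compares $V$ with the free-boundary solution $(u_\mu,h_\mu)$ of \eqref{1.2}: one has $V\ge u_\mu$ by the maximum principle, Theorem~\ref{t2.1} gives $u_\mu\to u^*$ uniformly on $[0,(c_\mu-\ep)t]$, and Proposition~\ref{p2.3} gives $c_\mu\to c_*$ (resp.\ $c_\mu\to\yy$). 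Your truncated-kernel route for the lower bound could be made to work, but you would still need to establish the half-line spreading speed for each $J_n$, whereas the paper's route reuses machinery already in place.
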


\begin{proof} \, {\rm(1)}\, Since the condition {\bf(J2)} holds, by Proposition \ref{p2.2}, the problem \eqref{2.2} has a solution $\phi_{c_*}$ with speed $c_*$. Note that $\phi_{c_*}>0$ in $\mathbb{R}$ and $\phi_{c_*}\in C^1(\mathbb{R})$.  We can find a $K\ge1$ such that $K\phi_{c_*}(x)\ge V(0,x)$ for $x\in\overline{\mathbb{R}}^+$. From the properties of $\phi_{c_*}$, one may see that for any $\lambda\in(0,u^*)$, there exists the unique $y_0$, depending only on $K$ and $\lambda$, such that $K\phi_{c_*}(y_0)=\lambda$. Let $\bar{V}(t,x)=K\phi_{c_*}(x-c_*t)$. Then for $x\in\overline{\mathbb{R}}^+$, we have
 \bess
 \bar{V}_t(t,x)&=&-c_*K\phi'_{c_*}(x-c_*t)\\
 &\ge& d\int_{-\yy}^{\yy}J(x,y)\bar V(t,y)\dy-d\bar V(t,x)+f(\bar V)\\
 &\ge&d\int_{0}^{\yy}J(x,y)\bar V(t,y)\dy-dj(x)\bar V(t,x)+f(\bar V).
 \eess
Since $K\phi_{c_*}(x)\ge, \not\equiv V(0,x)$ in $\overline{\mathbb{R}}^+$. By the maximum principle (\cite[Lemma 2.2]{LW21}), $\bar{V}(t,x)>V(t,x)$ for $t>0$ and $x\in\overline{\mathbb{R}}^+$. Thus, for $t\gg1$, we have $\tilde {x}^+_{\lambda}(t)\le y_0+c_*t$,
 which yields
 \[\limsup_{t\to\yy}\frac{\tilde {x}^+_{\lambda}(t)}{t}\le c_* ~ ~ {\rm and } ~ ~ \limsup_{t\to\yy}\frac{\tilde {x}^-_{\lambda}(t)}{t}\le c_*.\]

It now remains to show
 \bess\liminf_{t\to\yy}\frac{\tilde {x}^+_{\lambda}(t)}{t}\ge c_* ~ ~ {\rm and } ~ ~ \liminf_{t\to\yy}\frac{\tilde {x}^-_{\lambda}(t)}{t}\ge c_*.
 \eess
To stress the dependence on $\mu$, we denote the unique solution of \eqref{1.2} by $(u_{\mu},h_{\mu})$. Since {\bf(J2)} indicates {\bf(J1)}, we obtain that $\lim_{t\to\yy}h_{\mu}(t)/t=c_{\mu}$. Moreover, by Proposition \ref{p2.3}, we have that $c_{\mu}\to c_*$ as $\mu\to\yy$. By the maximum principle (\cite[Lemma 2.1]{LW21}), one can see that for any $\mu>0$, $V(t,x)\ge u_{\mu}(t,x)$ for $t\ge0$ and $x\in[0,h_{\mu}(t)]$. For $\lambda\in(0,u^*)$, we choose $\delta>0$ small such that $\lambda<u^*-\delta$. Then from Theorem \ref{t2.1}, one has that for any $0<\ep\ll1$, there exists $T>0$ so that
 \[u^*-\delta\le u_{\mu}(t,x)\le u^*+\delta ~ ~ {\rm for} ~ t\ge T, ~ x\in[0,(c_{\mu}-\ep)t],\]
which yields $\tilde {x}^+_{\lambda}(t)\ge (c_{\mu}-\ep)t$ and $\tilde {x}^-_{\lambda}(t)\ge (c_{\mu}-\ep)t$ for $t\gg1$. Letting $\ep\to0$ and $\mu\to\yy$, we obtain the assertion (1).

\sk{\rm(2)}\, If {\bf(J2)} is violated but {\bf(J1)} holds, from the above analysis we have $\tilde {x}^+_{\lambda}(t)\ge (c_{\mu}-\ep)t$ and $\tilde {x}^-_{\lambda}(t)\ge (c_{\mu}-\ep)t$ for $t\gg1$. By Proposition \ref{p2.3}, $\lim_{\mu\to\yy}c_{\mu}=\yy$, and thus the desired result holds.

If {\bf(J1)} does not hold, by Theorem \ref{t2.1} and choosing $\delta>0$ as above one may see that for any $c>0$, there is $T_1>0$ such that
   \[u^*-\delta\le u_{\mu}(t,x)\le u^*+\delta ~ ~ {\rm for} ~ t\ge T_1, ~ x\in[0,ct],\]
which implies $\tilde {x}^+_{\lambda}(t)\ge ct$ and $\tilde {x}^-_{\lambda}(t)\ge ct$ for $t\gg1$. Letting $c\to\yy$, we get the assertion (2).
\end{proof}

From Propositions \ref{p2.3}, \ref{p2.4} and \cite[Theorem 4.4]{LW21}, we see that if the condition {\bf(J2)} holds, then the spreading speed of \eqref{1.2} converges to that of \eqref{2.4} as $\mu\to\yy$. Our next conclusion shows that problem \eqref{2.4} is the limiting problem of \eqref{1.2} as $\mu\to\yy$. Moreover, it is well known that problem
  \bes\left\{\begin{aligned}\label{2.5}
&v_t= d\int_{0}^{h_0}J(x,y)v(t,y)\dy-dj(x)v(t,x)+f(v), && t>0,\ x\in [0,h_0],\\
&v(0,x)=u_0(x), && x\in[0,h_0]
 \end{aligned}\right.\ees
 has a unique positive solution $v\in C(\overline{\mathbb{R}}^+\times[0,h_0])$. The following theorem also indicates that problem \eqref{2.5} can be viewed as the limiting problem of \eqref{1.2} as $\mu\to0$.

 \begin{theorem}\label{t2.3}Suppose that $f$ satisfies the condition {\bf(F)}. Let $(u_{\mu},h_{\mu})$ be a solution of \eqref{1.2} and $V,\,v$ be solutions of \eqref{2.4} and \eqref{2.5}, respectively. Then the followings hold:

 \sk{\rm(1)}\, $u_{\mu}\to v$ and $u_{\mu,t}\to v_t$ in $C_{loc}(\mathbb{R}^+\times[0,h_0])$, $h_{\mu}\to h_0$ in $C^1_{ loc}(\mathbb{R}^+)$ as $\mu\to0$;

  \sk{\rm(2)}\, $u_{\mu}\to V$ in $C_{loc}(\mathbb{R}^+\times\mathbb{R}^+)$, $h_{\mu}\to \yy$ locally uniformly in $\mathbb{R}^+$ as $\mu\to\yy$.
 \end{theorem}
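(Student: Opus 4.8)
The plan is to treat the two limits $\mu\to0$ and $\mu\to\infty$ by a compactness-plus-uniqueness argument, exploiting the comparison structure of \eqref{1.2} to trap $(u_\mu,h_\mu)$ between the relevant reference problems. For part (1), the key observation is that the free boundary hardly moves when $\mu$ is small: from the equation for $h_\mu'$ one has $0\le h_\mu'(t)\le\mu\|J\|_{L^\infty}u^*\, h_\mu(t)$ (after a crude a priori bound $u_\mu\le C$, say $C=\max\{u^*,\|u_0\|_\infty\}$ by comparison with the ODE $y'=f(y)$), so on any fixed interval $[0,T]$ one gets $h_\mu(t)\le h_0 e^{CT\mu}\to h_0$ as $\mu\to0$; consequently $h_\mu\to h_0$ in $C([0,T])$ and then, feeding this back into the $h_\mu'$ formula, also in $C^1_{loc}(\mathbb{R}^+)$. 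With $h_\mu$ essentially equal to $h_0$, the equation for $u_\mu$ is a small perturbation of \eqref{2.5}: I would write $w_\mu=u_\mu-v$ (extending $v$ by $0$ past $h_0$ if needed), estimate $\partial_t w_\mu$ using the Lipschitz continuity of $f$ on $[0,C]$ and the fact that the nonlocal term differs from that of \eqref{2.5} only by an integral over the thin strip $[h_0,h_\mu(t)]$ of measure $O(\mu)$, and close a Gronwall inequality on $[0,T]$ to get $u_\mu\to v$ uniformly on $[0,T]\times[0,h_0]$; differentiating the equation then yields $u_{\mu,t}\to v_t$ in $C_{loc}$.

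For part (2), $\mu\to\infty$, the natural route is a normal-families argument. By the uniform bound $0\le u_\mu\le C$ and the equation, $u_{\mu,t}$ is uniformly bounded on any set $[0,T]\times[0,R]$, and the spatial modulus of continuity of $u_\mu$ is controlled by that of $J$; hence $\{u_\mu\}$ is precompact in $C_{loc}(\overline{\mathbb{R}}^+\times\overline{\mathbb{R}}^+)$ (after also checking, via monotonicity of $h_\mu$ in $\mu$ and the lower solutions already used in Theorem~\ref{t2.1}, that $h_\mu\to\infty$ locally uniformly, so that for fixed $R$ one has $h_\mu(t)>R$ for all $\mu$ large and $t$ bounded below). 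Along a subsequence $u_{\mu_n}\to U_\infty$; passing to the limit in the integral (Duhamel) form of the $u_\mu$-equation, and using $h_{\mu_n}\to\infty$ together with $j(x)=\int_0^\infty J(x,y)\dy$, one sees $U_\infty$ solves \eqref{2.4} with the same initial data $u_0$. Since \eqref{2.4} has a unique solution $V$, the full family converges, giving (2).

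I would organize the limit-passage so that the monotone dependence $h_\mu$ increasing in $\mu$ (which follows from the comparison principle \cite[Theorem 3.7]{LW21} applied to \eqref{1.2} with two values of $\mu$) is established first; this makes both $h_\mu\to h_0$ from above as $\mu\to0$ and $h_\mu\to\infty$ as $\mu\to\infty$ transparent, and it also provides the one-sided control needed to localize the $u_\mu$-equation. The boundary condition $u_\mu(t,h_\mu(t))=0$ should be handled by extending $u_\mu$ by zero to all of $\overline{\mathbb{R}}^+$; the extended function is still continuous and the Duhamel formula is globally valid, which avoids any awkwardness with the moving domain when taking limits.

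The main obstacle I anticipate is the $\mu\to\infty$ case: getting enough compactness on $u_\mu$ up to the free boundary (not just on a fixed compact set, since $h_\mu$ itself is moving) and then justifying the passage to the limit in the nonlocal term, where the domain of integration $[0,h_\mu(t)]$ is expanding to $[0,\infty)$. The resolution is to fix a large $R$, use $h_\mu(t)\ge h_\mu(0)=h_0$ plus the lower-solution estimate to guarantee $h_\mu(t)\ge R$ for $t$ in a compact subinterval of $\mathbb{R}^+$ once $\mu$ is large (so locally we are genuinely on a fixed domain $[0,R]$), bound $\int_{h_\mu(t)}^\infty J(x,y)u_\mu\,\dy\le C\int_R^\infty\!\!\int_0^\infty J(x,y)\dy\,\dx$ which is small for $R$ large uniformly in $\mu$, and then let $R\to\infty$ after the subsequential limit is identified. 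The $\mu\to0$ case is comparatively routine once the $C^1$ convergence $h_\mu\to h_0$ is in hand.
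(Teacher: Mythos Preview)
Your overall plan is sound and both parts would close, but the paper takes a shorter route by exploiting the monotonicity in $\mu$ more fully. By comparison, $(u_\mu,h_\mu)$ is increasing in $\mu$ with $v\le u_\mu\le V$ and $h_0\le h_\mu$, so the pointwise limits $\lim_{\mu\to0}u_\mu$, $\lim_{\mu\to0}h_\mu$, $\lim_{\mu\to\infty}u_\mu$, $\lim_{\mu\to\infty}h_\mu$ all exist \emph{a priori}, with no need for compactness or subsequences. One then passes to the limit in the integral (Duhamel) form of the $u_\mu$-equation by dominated convergence, identifies the limit via uniqueness for \eqref{2.5} resp.\ \eqref{2.4}, and upgrades pointwise to locally uniform convergence by Dini's theorem. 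This bypasses your Gronwall estimate in part~(1) and your normal-families argument in part~(2); in particular the question of spatial equicontinuity of $u_\mu$ never arises. (Your route works too but is heavier; note also that the ``globally valid Duhamel formula'' for the zero-extension is not literally correct---for $x>h_0$ the integral must start at a time $t_1$ with $h_\mu(t_1)>x$, which is how the paper writes it.)

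One step in your part~(2) is not justified as written: showing $h_\mu(t)\to\infty$ for each fixed $t>0$ as $\mu\to\infty$. You invoke ``the lower solutions already used in Theorem~\ref{t2.1}'', but those are built for the $t\to\infty$ limit at fixed $\mu$; the time-shifts $T,T_1$ in that construction depend on $\mu$, so they give no control at a fixed finite time as $\mu$ varies, and the theorem does not even assume spreading. The paper instead argues directly from the free-boundary equation: if $H(t_0):=\lim_{\mu\to\infty}h_\mu(t_0)<\infty$, then using $J\ge\sigma_0>0$ on $[-2\delta,2\delta]$ one gets, for $t\in(0,t_0]$,
\[
h'_\mu(t)\ge \mu\,\sigma_0\delta\int_{h_\mu(t)-\delta}^{h_\mu(t)} u_\mu(t,x)\,dx,
\]
and the integral has a positive limit as $\mu\to\infty$ (dominated convergence, using $h_\mu(t)\le H(t_0)$ and $u_\mu\nearrow u_\infty>0$ on $[0,H(t))$), forcing $h'_\mu(t)\to\infty$ and hence $h_\mu(t_0)\to\infty$, a contradiction. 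This short argument is the missing ingredient in your sketch and should replace the appeal to Theorem~\ref{t2.1}.
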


 \begin{proof} \sk{\rm(1)}\, Firstly, comparison principles assert that $(u_{\mu},h_{\mu})$ is increasing in $\mu>0$, $u_{\mu}(t,x)\ge v(t,x)$ and $h_{\mu}(t)\ge h_0$ for $t\ge0$ and $x\in[0,h_0]$. Thus we can define $v_0(t,x)=\lim_{\mu\to0}u_{\mu}(t,x)$ as well as $h_*(t)=\lim_{\mu\to0}h_{\mu}(t)$.  Moreover, $v_0(t,x)\ge v(t,x)$ and $h_*(t)\ge h_0$ for $t\ge0$ and $x\in[0,h_0]$.

 Obviously, for any $t>0$, we have
 \[h_0\le h_{\mu}(t)=h_0+\mu\dd\int_{0}^{t}\int_{0}^{h_{\mu}(\tau)}\int_{h_{\mu}(\tau)}^{\infty}
J(x,y)u_{\mu}(\tau,x)\dy\dx{\rm d}\tau\to h_0 ~ ~ {\rm as} ~ \mu\to0.\]
Hence $h_*(t)\equiv h_0$ for $t\ge0$. By Dini's theorem, $h_{\mu}(t)\to h_0$ locally uniformly in $\mathbb{R}^+$ as $\mu\to0$. Additionally, for any $t>0$ and $x\in[0,h_0]$, from the first equation of \eqref{1.2} we have
\[u_{\mu}=u_0(x)+d\dd\int_{0}^{t}\int_{0}^{h_{\mu}(\tau)}J(x,y)u_{\mu}(\tau,y)\dy{\rm d}\tau-dj(x)\dd\int_{0}^{t}u_{\mu}(\tau,x){\rm d}\tau+\dd\int_{0}^{t}f(u_{\mu}(\tau,x)){\rm d}\tau.\]
By the dominated convergence theorem, one easily gets
\[v_0=u_0(x)+d\dd\int_{0}^{t}\int_{0}^{h_0}J(x,y)v_0(\tau,y)\dy{\rm d}\tau-dj(x)\dd\int_{0}^{t}v_0(\tau,x){\rm d}\tau+\dd\int_{0}^{t}f(v_0(\tau,x)){\rm d}\tau,\]
which yields $v_0\equiv v$ for $t\ge0$ and $x\in[0,h_0]$. By Dini's theorem again, $u_{\mu}\to v$ locally uniformly in $\mathbb{R}^+\times[0,h_0]$. By the first and third equations in \eqref{1.2}, respectively, we easily see that $u_{\mu,t}\to v_t$ locally uniformly in $\mathbb{R}^+\times[0,h_0]$ and $h'_{\mu}\to0$ locally uniformly in $\mathbb{R}^+$ as $\mu\to0$.

\sk{\rm(2)}\, By some comparison arguments, we have that $(u_{\mu},h_{\mu})$ is increasing in $\mu>0$ and $u_{\mu}(t,x)\le V(t,x)$ for $t\ge0$ and $x\in[0,h_{\mu}(t)]$. So we can define $H(t)=\lim_{\mu\to\yy}h_{\mu}(t)$ and $u_\yy(t,x)=\lim_{\mu\to\yy}u_{\mu}(t,x)$ for $t\ge0$ and $x\in[0,H(t))$. We first show that $H(t)=\yy$ for any $t>0$. If there exists $t_0>0$ such that $H(t_0)<\yy$. Then $h_{\mu}(t)\le H(t)\le H(t_0)$ for $t\in[0,t_0]$. By the condition {\bf(J)}, there exist small $\sigma_0,\, \delta>0$ such that $J(x)\ge \sigma_0$ for $|x|\le2\delta$. Using the third equation of \eqref{1.2} and the dominated convergence theorem we have that, for $t\in(0,t_0]$,
\bess
h'_{\mu}(t)\ge\mu\dd\int_{h_{\mu}(t)-\delta}^{h_{\mu}(t)}\int_{h_{\mu}(t)}^{h_{\mu}(t)+\delta}
J(x,y)u_{\mu}(t,x)\dy\dx\ge\mu\sigma_0\delta\int_{h_{\mu}(t)-\delta}^{h_{\mu}(t)}u_{\mu}(t,x)\dx\to\yy ~ {\rm as } ~ \mu\to\yy,
\eess
which implies that $H(t_0)=\yy$. This contradiction shows that $H(t)=\yy$ for $t>0$. Furthermore, since $h_{\mu}(t)$ is increasing in $t>0$, we easily see that $\lim_{\mu\to\yy}h_{\mu}(t)=\yy$ locally uniformly in $\mathbb{R}^+$.

We now prove that $u_\yy$ satisfies \eqref{2.4}. For any $t>0$ and $x\in\overline{\mathbb{R}}^+$, we can choose $\mu$ large enough, say $\mu\ge\mu_1$, such that $x\in[0,h_{\mu}(t))$ for $\mu\ge\mu_1$. Clearly, there is $t_1\in(0,t)$ such that $x\in[0,h_{\mu}(t_1))$ for $\mu\ge\mu_1$. It then follows from \eqref{1.2} that, for $\mu\ge\mu_1$,
\[u_{\mu}=u_{\mu}(t_1,x)+d\dd\int_{t_1}^{t}\int_{0}^{h_{\mu}(\tau)}\!J(x,y)u_{\mu}(\tau,y)\dy{\rm d}\tau-dj(x)\dd\int_{t_1}^{t}u_{\mu}(\tau,x){\rm d}\tau+\dd\int_{t_1}^{t}f(u_{\mu}(\tau,x)){\rm d}\tau.\]
Using the dominated convergence theorem again, we derive
\[u_\yy=u_\yy(t_1,x)+d\dd\int_{t_1}^{t}\int_{0}^{\yy}J(x,y)u_\yy(\tau,y)\dy{\rm d}\tau-dj(x)\dd\int_{t_1}^{t}u_\yy(\tau,x){\rm d}\tau+\dd\int_{t_1}^{t}f(u_\yy(\tau,x)){\rm d}\tau.\]
By differentiating the above equation by $t$, we see
\[u_{\yy,t}=d\dd\int_{0}^{\yy}J(x,y)u_\yy(t,y)\dy-dj(x)u_\yy(t,x)+f(u_\yy(t,x)).\]
Moreover, it is easy to show that $u_\yy(0,x)=u_0(x)$ for $x\in[0,h_0]$ and $u_\yy(0,x)\equiv0$ for $x>h_0$. By the uniqueness of solutions, $u_\yy\equiv V$. It then follows from Dini's theorem that $u_{\mu}\to V$ locally uniformly in $\mathbb{R}^+\times\mathbb{R}^+$ as $\mu\to\yy$. The proof is complete.
 \end{proof}

\section{Sharp estimates for the spreading speed of \eqref{1.2}}\lbl{s3}
\setcounter{equation}{0} {\setlength\arraycolsep{2pt}

In this section, some sharp estimates for \eqref{1.2} will be established by following analogous lines in the proofs of \cite[Theorem 1.4]{DN21} and \cite[Theorem 1.6]{DN212}.

\begin{theorem}\label{t3.1} Let $J$ be compactly supported, $f\in C^2$ and satisfy the condition {\bf(F)}. Let $(u,h)$ be the solution of \eqref{1.2}. If spreading happens, then there exist $T$ and $C>0$ such that
\[|h(t)-c_0t|\le C\ln t ~ ~ {\rm for } ~ t\ge T.\]
\end{theorem}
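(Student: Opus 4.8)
The plan is to establish matching upper and lower bounds $h(t)\le c_0t+C\ln t+C$ and $h(t)\ge c_0t-C\ln t-C$ by constructing explicit super- and subsolutions of \eqref{1.2} whose free boundary is of the form $c_0t\pm C\ln(t+\theta)+K$, mimicking the strategy of \cite{DN21,DN212}. The key structural input is the semi-wave pair $(c_0,\phi^{c_0})$ from Proposition \ref{p2.1}: since $J$ is compactly supported it satisfies {\bf(J1)} and in fact {\bf(J2)}, so $\phi^{c_0}$ is $C^1$ and, by a standard exponential-decay analysis of the linearized operator near $x=-\yy$ (using $f'(0)>0$ and the compact support of $J$), one has $u^*-\phi^{c_0}(x)$ decaying like $e^{\lambda_0 x}$ as $x\to-\yy$ for some $\lambda_0>0$; likewise $\phi^{c_0}(x)$ itself stays away from $0$ with a definite slope structure near $x=0$. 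These decay/nondegeneracy estimates are what make the logarithmic correction the right size.

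For the upper bound I would set $\bar h(t)=c_0t+\gamma_1\ln(t+\theta)+K_1$ and $\bar u(t,x)=\min\{u^*,\ \phi^{c_0}(x-\bar h(t))+\eta e^{-\beta t}\phi^{c_0}(x-\bar h(t))\}$, or more simply a perturbed semi-wave $\bar u(t,x)=(1+\delta(t))\phi^{c_0}(x-\bar h(t))$ truncated at $u^*$ with $\delta(t)\searrow 0$ like $1/(t+\theta)$. Plugging into the equation: the semi-wave identity cancels the main terms, $\bar h'(t)-c_0 = \gamma_1/(t+\theta)$ produces a favorable term $-(\bar h'(t)-c_0)(\phi^{c_0})'>0$ (since $(\phi^{c_0})'<0$) of order $\gamma_1/(t+\theta)$, while the error from $\delta(t)$ and from replacing the full-line integral by the truncated one in \eqref{1.2} (the tail $\int_{h}^{\yy}$, which is zero here anyway by compact support once $x-\bar h<-\,\mathrm{supp}$, but nonzero near the boundary) is controlled by choosing $\gamma_1$ large; one must also check the free-boundary inequality $\bar h'(t)\ge \mu\int_0^{\bar h}\int_{\bar h}^{\yy}J\bar u\,\dy\dx$, which again reduces via the semi-wave normalization $c_0=\mu\int_{-\yy}^0\int_0^\yy J\phi^{c_0}$ to absorbing a lower-order term into $\gamma_1/(t+\theta)$. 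Then \cite[Theorem 3.7]{LW21} (the comparison principle) gives $h(t)\le \bar h(t)$ for $t$ large after fixing $K_1$ so the ordering holds at the initial time $T$.

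For the lower bound the construction is the delicate one. I would take $\underline h(t)=c_0t-\gamma_2\ln(t+\theta)-K_2$ and $\underline u(t,x)=\max\{0,\ (1-\delta(t))\phi^{c_0}(x-\underline h(t))-\text{(boundary-layer correction)}\}$ with $\delta(t)\sim 1/(t+\theta)$, exploiting that $\underline h'(t)-c_0=-\gamma_2/(t+\theta)<0$ now generates a term $-(\underline h'-c_0)(\phi^{c_0})'<0$ of the right order to beat the positive errors; near $x=0$ one needs a cutoff/correction since $\phi^{c_0}$ touches zero there and the convexity/sign of $f$ must be used (this is where $f\in C^2$ enters: a Taylor expansion $f(s)=f'(0)s+O(s^2)$ near $s=0$ gives the needed one-sided bound on the nonlinear term in the boundary layer). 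One then verifies $\underline u_t \le d\int_0^{\underline h}J\underline u\,\dy - dj\underline u + f(\underline u)$ and $\underline h'(t)\le \mu\int_0^{\underline h}\int_{\underline h}^\yy J\underline u\,\dy\dx$, and applies the comparison principle together with the fact (from \cite{LW21}, spreading regime) that $h(t)/t\to c_0$ and $u\to u^*$ locally uniformly — so that at some large time $T$ we have $h(T)\ge \underline h(0)$ and $u(T,\cdot)\ge \underline u(0,\cdot)$ on $[0,\underline h(0)]$, which can be arranged by taking $K_2$ large and using Theorem \ref{t2.1}. I expect the main obstacle to be the lower-solution boundary layer: reconciling the free-boundary condition $\underline h'(t)\le\mu\iint J\underline u$ with the interior equation forces a careful simultaneous choice of $\gamma_2$, $\theta$, $K_2$ and the correction profile, since pushing $\underline u$ down near $x=\underline h(t)$ to satisfy the interior inequality tends to violate the free-boundary inequality and vice versa; the compact support of $J$ helps (the relevant integrals only see an $O(1)$-neighborhood of the boundary where $\phi^{c_0}$ is nearly linear), and the $C^1$ regularity plus exponential decay of $u^*-\phi^{c_0}$ is exactly what closes the estimate with a $\ln t$ (rather than larger) correction.
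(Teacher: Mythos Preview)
Your overall plan---perturbed semi-wave super/sub-solutions with a logarithmic drift in the free boundary---is the right scaffolding and matches the paper's strategy. But the execution diverges from the paper in two places, one harmless and one revealing a genuine confusion.

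On the upper side, the paper (Lemma~\ref{l3.1}) actually proves the stronger bound $h(t)-c_0t\le C$ with $C$ a \emph{constant}, under {\bf(J1)} alone, via $\bar h(t)=c_0t+\delta(t)$ with $\delta$ bounded and $\bar u=(1+\ep(t))\phi^{c_0}(x-\bar h(t))$, $\ep(t)=(t+\theta)^{-\beta}$. Your $\gamma_1\ln(t+\theta)$ drift is unnecessary there because the truncation at $x=0$ always helps a supersolution. This is harmless for the theorem but shows that the $\ln t$ in the statement comes entirely from the lower estimate.

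On the lower side, your diagnosis of the boundary layer is wrong. You write that near $x=0$ ``$\phi^{c_0}$ touches zero there'' and plan to Taylor-expand $f$ at $0$. But $\phi^{c_0}(x-\underline h(t))\big|_{x=0}=\phi^{c_0}(-\underline h(t))$ is close to $u^*$, not $0$; the zero of $\phi^{c_0}$ sits at $x=\underline h(t)$, the free boundary. The actual obstruction on $x\in[0,r]$ (with $[-r,r]={\rm supp}\,J$) is that replacing $d\int_{-\infty}^{\underline h}J\phi^{c_0}\,dy-d\phi^{c_0}$ from the semi-wave identity by $d\int_0^{\underline h}J\underline u\,dy-dj(x)\underline u$ introduces an error of the \emph{wrong} sign for a subsolution. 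Your exponential-decay remark about $u^*-\phi^{c_0}$ is exactly the tool that would kill this error, but you have not connected it to this issue; the ``Taylor at $0$'' argument you sketch is aimed at the wrong endpoint.

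The paper handles this not by correcting $\underline u$ near $x=0$ but by a separate quantitative step you omit entirely: Lemma~\ref{l3.2} builds a piecewise-linear subsolution (no semi-wave) yielding the rate $u(t+T,x)\ge u^*-\eta_2/(t+\theta)$ on $[0,\eta_1(t+\theta)]$. Then in Lemma~\ref{l3.3} the semi-wave subsolution $\underline u=(1-\ell_1/(t+\theta))\phi^{c_0}(x-\underline h(t))$ with $\underline h(t)=c_0(t+\theta)-\ell_2\ln((t+\theta)/\theta)$ is verified \emph{only} on $(\eta_0\underline h(t),\underline h(t))$, where $j(x)\equiv1$ and the truncation issue disappears; on $[0,\eta_0\underline h(t)]$ one uses the direct comparison $\underline u\le(1-\ell_1/(t+\theta))u^*\le u^*-\eta_2/(t+\theta)\le u(t+T,\cdot)$ supplied by Lemma~\ref{l3.2}. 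This hybrid argument invokes the comparison principle \cite[Theorem 3.8]{LW21} rather than \cite[Theorem 3.7]{LW21}. The $f\in C^2$ hypothesis enters through the Taylor expansion at $u^*$ (not at $0$) in both Lemma~\ref{l3.1} and Lemma~\ref{l3.3}.
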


This theorem will be proved by the following several lemmas.

\begin{lemma}\label{l3.1} Let the condition {\bf(J1)} hold, $f\in C^2$ and satisfy the condition {\bf(F)}. Let $(u,h)$ be the solution of \eqref{1.2} and spreading happen. Then there exists $C>0$ such that
\[h(t)-c_0t\le C ~ ~ {\rm for } ~ t\ge0.\]
\end{lemma}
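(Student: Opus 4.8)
The plan is to construct an explicit upper solution $(\bar u,\bar h)$ for \eqref{1.2} out of the semi-wave of Proposition~\ref{p2.1} (which exists precisely because {\bf(J1)} holds) and then invoke the comparison principle \cite[Theorem~3.7]{LW21}. Writing $\phi:=\phi^{c_0}$, I would take
\[
\rho(t)=\rho_0e^{-\alpha t},\qquad \bar h(t)=c_0t+C_1+B\!\int_0^t\!\rho(s)\,ds,\qquad \bar u(t,x)=(1+\rho(t))\,\phi\big(x-\bar h(t)\big),
\]
and fix the positive constants in the order $\alpha$ (small), then $C_1,\rho_0$ (large, read off from the initial data), then $B$ (large). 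Since $\rho$ decays exponentially, $\int_0^\yy\!\rho=\rho_0/\alpha<\yy$, so $\bar h(t)-c_0t\le C_1+B\rho_0/\alpha=:C$; hence once $h(t)\le\bar h(t)$ is established the lemma follows. The only role of {\bf(J1)} here is to guarantee, via Proposition~\ref{p2.1}, that $(c_0,\phi)$ exists (the finiteness of $\mu\int_{-\yy}^0\phi(z)(\int_{-\infty}^z\!J)\,dz=c_0$ being part of that).

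Before checking the upper-solution inequalities I would collect what is needed about $\phi$ and $f$. The function $\phi$ is nonincreasing with $0\le\phi\le u^*$, $\phi(0)=0$, $\phi(-\yy)=u^*$; since $f\in C^2$, the semi-wave equation gives $\phi\in C^2((-\yy,0])$ and, moreover, $\phi'<0$ on $(-\yy,0]$ (a zero of $\phi'$ would be a strict local minimum of $\phi'$ of value $0$, impossible as $\phi'\le0$). Fixing small $\delta_0>0$ and $Z_0>0$ with $\phi(-Z_0)=u^*-\delta_0$, this yields $m':=\min_{[-Z_0,0]}|\phi'|>0$. From {\bf(F)}, $f(u)/u$ strictly decreasing gives $f((1+\rho)w)\le(1+\rho)f(w)$ for all $w>0$, $\rho\ge0$, together with a quantitative form near $u^*$: there is $c_2>0$ with $(1+\rho)f(w)-f((1+\rho)w)\ge c_2\rho$ for $w\in[u^*-\delta_0,u^*]$ and $\rho\in[0,\rho_0]$ (the quotient $\rho^{-1}[(1+\rho)f(w)-f((1+\rho)w)]$ extends continuously and positively to $\rho=0$, with value $|f'(u^*)|u^*$ at $w=u^*$).

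Then I would verify the three defining conditions. \emph{Free boundary:} with $z=x-\bar h(t)$, a change of variables writes the right-hand side of the $\bar h'$-equation as $(1+\rho(t))\,\mu\int_{-\bar h(t)}^0\phi(z)\big(\int_{-\infty}^z\!J\big)\,dz\le(1+\rho(t))c_0\le c_0+B\rho(t)=\bar h'(t)$ once $B\ge c_0$, while $\bar u(t,\bar h(t))=(1+\rho(t))\phi(0)=0$. \emph{Interior inequality} on $z\in[-\bar h(t),0)$: abbreviating by $\mathcal{L}\bar u$ the right-hand side of the first line of \eqref{1.2} evaluated at $\bar u$, the semi-wave equation gives
\[
\mathcal{L}\bar u=-(1+\rho)c_0\phi'(z)-(1+\rho)R-\big[(1+\rho)f(\phi(z))-f((1+\rho)\phi(z))\big],\qquad R:=d\!\int_x^\yy\!J(s)\big[\phi(z-s)-\phi(z)\big]ds\ge0,
\]
where $R\ge0$ because $\phi$ is nonincreasing and $s\ge x\ge0$, and where the truncation of the integral to $\int_0^{\bar h}$ and the bound $j(x)\le1$ both enter with the favorable sign. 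Using $\bar u_t=\rho'(t)\phi(z)-(1+\rho)(c_0+B\rho(t))\phi'(z)$ this yields
\[
\bar u_t-\mathcal{L}\bar u\ \ge\ \rho'(t)\phi(z)+(1+\rho)B\rho(t)\,|\phi'(z)|+(1+\rho)R+\big[(1+\rho)f(\phi(z))-f((1+\rho)\phi(z))\big].
\]
On $\{z\le-Z_0\}$ the bracket is $\ge c_2\rho(t)$, so the right side is $\ge\rho(t)(c_2-\alpha u^*)\ge0$ once $\alpha\le c_2/u^*$; on $z\in[-Z_0,0)$ one keeps only the drift term, uses $|\phi'(z)|\ge m'$, and the right side is $\ge\rho(t)(Bm'-\alpha u^*)\ge0$ once $B\ge\alpha u^*/m'$. \emph{Initial data:} choose $C_1\ge\max\{h_0,Z_0\}$ and $\rho_0$ large enough that $(1+\rho_0)\phi(x-C_1)\ge u_0(x)$ on $[0,h_0]$, which is possible since $\phi(h_0-C_1)\to u^*$ as $C_1\to\yy$. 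Then \cite[Theorem~3.7]{LW21} gives $u\le\bar u$ and $h\le\bar h$, and the lemma follows.

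I expect the interior inequality near the free boundary to be the main obstacle: there $\phi(z)$ is small, so the correction $\rho(t)\phi(z)$ is small, yet $f$ is increasing near $0$, and the inequality survives only because the drift term $-\bar h'(t)\phi'(z)$ is bounded away from $0$ — this is exactly the non-degeneracy $|\phi'|\ge m'>0$ at the edge — and because the perturbation is taken \emph{multiplicative} (hence vanishing at $x=\bar h(t)$) rather than additive; this is why $B$ has to be fixed last and large, after $\alpha,\rho_0$. The only other point requiring care is the fact $\phi'<0$ up to $x=0$, which, if not available from earlier work, follows from the differentiation argument sketched above.
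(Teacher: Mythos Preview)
Your approach is essentially the paper's: an upper solution $\bar u=(1+\rho(t))\,\phi^{c_0}(x-\bar h(t))$ built from the semi-wave with a decaying multiplicative amplitude, checked via the same two-region split (near the edge use the drift $|\phi'|$; far from the edge use the reaction excess $(1+\rho)f(\phi)-f((1+\rho)\phi)$), then the comparison principle. The paper uses polynomial decay $\ep(t)=(t+\theta)^{-\beta}$ rather than your exponential $\rho_0e^{-\alpha t}$, and---more to the point---starts the comparison at a time $T$ chosen so that $u(T,\cdot)\le(1+\ep(0)/2)u^*$, which lets the initial amplitude $\ep(0)=\theta^{-\beta}$ be \emph{small}.

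Your choice to compare from $t=0$ introduces a circularity you do not resolve. You fix $\alpha$ first and then take $\rho_0$ large to dominate $u_0$, but your constraint $\alpha\le c_2/u^*$ involves $c_2$, which in your argument is the minimum of $\rho^{-1}\big[(1+\rho)f(w)-f((1+\rho)w)\big]$ over the compact set $[u^*-\delta_0,u^*]\times[0,\rho_0]$; hence $c_2=c_2(\rho_0)$ a priori, and ``$\alpha$ before $\rho_0$'' is not well-posed. To close this you must either show that $c_2$ admits a positive lower bound independent of $\rho_0$ (this is true---for $\rho\ge1$ one has $\rho^{-1}[(1+\rho)f(w)-f((1+\rho)w)]\ge w\,[f(w)/w-f(2w)/(2w)]>0$ by strict monotonicity of $f(u)/u$---but you do not argue it), or else start at a later time $T$ as the paper does, so that $\rho_0$ can be fixed small from the outset and the issue disappears.

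A smaller gap: your one-line justification of $\phi'<0$ is incorrect as written. If $\phi'(x_0)=0$ with $\phi'\le0$ everywhere, then $x_0$ is a local \emph{maximum} of $\phi'$, which is perfectly compatible with $\phi'\le0$; no contradiction arises without further work. Either cite strict monotonicity of the semi-wave from the literature, or simply observe that the semi-wave equation at $x=0$ gives $c_0\phi'(0)=-d\int_{-\infty}^0 J(-y)\phi(y)\,dy<0$, so $\phi'(0)<0$, and then take $Z_0>0$ small enough that continuity yields $m'=\min_{[-Z_0,0]}|\phi'|>0$; with $\delta_0$ shrunk accordingly this is all you need.
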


\begin{proof}For the positive constants $\beta>1$ and $\theta,\,l\gg 1$, which will be determined later, we define
 \bess
&\ep(t)=(t+\theta)^{-\beta}, ~ ~ \delta(t)= l+\frac{c_0}{1-\beta}\big[(t+\theta)^{1-\beta}-\theta^{1-\beta}\big], ~ ~ \bar{h}(t)=c_0t+\delta(t),&\\[1mm]
 &\bar{u}(t,x)=(1+\ep(t))\phi^{c_0}(x-\bar{h}(t))\;\;\;\text{ for }\;t\ge0,\;\; x\in[0,\bar{h}(t)].&\eess
We will show that there exist adequate $\theta,\,  l$ and $T>0$ such that
\bess
\left\{\begin{aligned}
&\bar u_t\ge d\dd\int_{0}^{\bar h(t)}J(x,y)\bar u(t,y)\dy-dj(x)\bar u+ f(\bar u), && t>0,~0\le x<\bar h(t),\\
&\bar u(t,\bar h(t))\ge0,&& t>0,\\
&\bar h'(t)\ge\mu\dd\int_{0}^{\bar h(t)}\!\!\int_{\bar h(t)}^{\infty}
J(x,y)\bar u(t,x)\dy\dx,&& t>0,\\
&\bar h(0)\ge h(T),\;\; \bar u(0,x)\ge u(T,x),&& x\in[0,h(T)].
\end{aligned}\right.
 \eess
Once it is done, it directly follows from the comparison principle (\cite[Theorem 3.7]{LW21}) that
 \[u(t+T,x)\le\bar{u}(t,x), ~ ~ h(t+T)\le \bar{h}(t) ~ ~ ~ {\rm for } ~ t\ge0, ~ x\in[0,h(t+T)],\]
 which immediately yields the desired result.

 We first prove that if $\theta$ is large sufficiently, then
 \bes\label{3.1}
 \bar u_t\ge d\dd\int_{0}^{\bar h(t)}J(x,y)\bar u(t,y)\dy-dj(x)\bar u+ f(\bar u) ~ ~ {\rm for } ~ t>0 ~ x\in[0,\bar{h}(t)).
 \ees
 Direct computations show that
 \bess
 \bar{u}_t&=&-(1+\ep(t))(c_0+\delta'(t)){\phi^{c_0}}'(x-\bar{h}(t))+\ep'(t)\phi^{c_0}(x-\bar{h}(t))\\
 &=&(1+\ep(t))\left(d\dd\int_{-\yy}^{\bar h(t)}\!J(x,y)\phi^{c_0}(y-\bar{h}(t))\dy-d\phi^{c_0}(x-\bar{h}(t))+f(\phi^{c_0}(x-\bar{h}(t)))\right)\\
 &&-(1+\ep(t))\delta'(t){\phi^{c_0}}'(x-\bar{h}(t))+\ep'(t)\phi^{c_0}(x-\bar{h}(t))\\
 &\ge&d\dd\int_{0}^{\bar h(t)}J(x,y)\bar u(t,y)\dy-dj(x)\bar u+f(\bar u)+A(t,x),
 \eess
 where
 \bess
 A(t,x)&=&(1+\ep(t))f(\phi^{c_0}(x-\bar{h}(t)))-f((1+\ep(t))\phi^{c_0}(x-\bar{h}(t)))\\
 &&-(1+\ep(t))\delta'(t){\phi^{c_0}}'(x-\bar{h}(t))+\ep'(t)\phi^{c_0}(x-\bar{h}(t)).
 \eess
It suffices to show that $A(t,x)\ge0$ for $t\ge0$ and $x\in[0,\bar{h}(t)]$. Let $C=\max_{u\in[0,2u^*]}|f''(u)|$. By the Taylor expansion, we have
 \bess
 (1+\ep)f(u)-f((1+\ep)u)&=&-f((1+\ep)u^*)+(1+\ep)
 \left[f'(\tilde{u})-f'((1+\ep)\tilde{u})\right](u-u^*)\\
 &\ge&-\ep u^*f'(u^*)+o(\ep)-(1+\ep)C\ep u^*(u^*-u)\;\;\;\text{for}\;\;u\in[0,u^*].
 \eess
As $\phi^{c_0}(-\yy)=u^*$, for any small $\ep_0>0$ there exists $ l_1>0$ such that $\phi^{c_0}(- l_1)\ge (1-\ep_0)u^*$. So $\phi^{c_0}(x-\bar{h}(t))\in[(1-\ep_0)u^*,u^*]$ for $x\in[0,\bar{h}(t)- l_1]$. When $\theta\gg 1$, $l>l_1$ and $0<\ep_0\ll 1$, we have
 \bess
 A(t,x)&\ge&(1+\ep(t))f(\phi^{c_0}(x-\bar{h}(t)))-f((1+\ep(t))\phi^{c_0}(x-\bar{h}(t)))-\frac{\beta}{(t+\theta)^{\beta+1}}\phi^{c_0}(x-\bar{h}(t))\\
 &\ge&-\ep(t) u^*f'(u^*)+o(\ep(t))-(1+\ep(t))C\ep(t) u^*\ep_0u^*-\ep(t)\frac{\beta}{t+\theta}u^*\\
 &\ge&\ep(t)\left[-u^*f'(u^*)+o(1)-2Cu^*\ep_0u^*-{\beta}u^*/\theta\right]\ge0 \;\;
 \text{for}\;\; x\in[0,\bar{h}(t)- l_1],
 \eess
and
 \bess A(t,x)&\ge&-(1+\ep(t))\delta'(t){\phi^{c_0}}'(x-\bar{h}(t))
 -\frac{\beta}{(t+\theta)^{\beta+1}}\phi^{c_0}(x-\bar{h}(t))\\
 &\ge&c_0\ep_1\ep(t)-\frac{\beta}{(t+\theta)^{\beta+1}}u^*\\
 &\ge&(t+\theta)^{-\beta-1}(c_0\ep_1\theta-\beta u^*)\ge0\;\;
 \text{for}\;\;x\in[\bar{h}(t)- l_1,\bar{h}(t)],
 \eess
where $\ep_1=\inf_{x\in[- l_1,\,0]}(-{\phi^{c_0}}'(x))>0$. Hence \eqref{3.1} holds. Moreover, simple calculations yield
 \bess
 \mu\dd\int_{0}^{\bar h(t)}\!\!\int_{\bar h(t)}^{\infty}
J(x,y)\bar u(t,x)\dy\dx&=&\mu(1+\ep(t))\dd\int_{0}^{\bar h(t)}\!\!\int_{\bar h(t)}^{\infty}
J(x,y)\phi^{c_0}(x-\bar{h}(t))\dy\dx\\
&\le&\mu(1+\ep(t))\dd\int_{-\yy}^{0}\int_{0}^{\infty}
J(x,y)\phi^{c_0}(x)\dy\dx\\
&=&(1+\ep(t))c_0=\bar{h}'(t).
 \eess
Since $\limsup_{t\to\yy}u(t,x)\le u^*$ uniformly in $[0,h(t)]$, for $\theta$ chosen as above, there is $T>0$ such that $u(T,x)\le (1+\frac{\ep(0)}2)u^*$ for $x\in[0,h(T)]$. Together with $\phi^{c_0}(-\yy)=u^*$, one may choose $ l$ large sufficiently, if necessary, such that $\bar{h}(0)= l>h(T)$, and
 \[\bar{u}(0,x)=(1+\ep(0))\phi^{c_0}(x- l)\ge(1+{\ep(0)}/2)u^*\ge u(T,x) ~ ~ {\rm for } ~ x\in[0,h(T)].\]
The proof is complete.
\end{proof}

Now we prove a crucial estimate for the solution of \eqref{1.2}. Obviously, the condition {\bf(F)} implies that there is a positive constant $\rho$ depending only on $f$ such that
\bes\label{3.2}f(u)\ge \rho\min\{u,\, u^*-u\} ~ ~ ~ {\rm for} ~ u\in[0,u^*].
\ees

\begin{lemma}\label{l3.2} Let $[-r,r]$ be the smallest compact set which contains the support of $J$ and spreading happen for the problem \eqref{1.2}. Let $(u,h)$ be the solution of \eqref{1.2}. Then there exist positive constants $\eta_1,\,\eta_2$ and $\theta_1$ such that for any $\theta\ge\theta_1$, one can find a $T>0$ depending on $\eta_1$ and $\theta$ such that
\[u(t+T,x)\ge u^*-{\eta_2}/(t+\theta) ~ ~ {\rm for } ~ t\ge0, ~ x\in[0,\,\eta_1(t+\theta)].\]
\end{lemma}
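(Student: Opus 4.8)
The plan is to build a generalized lower solution $(\underline u,\underline h)$ of \eqref{1.2} out of the semi-wave $\phi^{c_0}$ of \eqref{2.1}, dual to the upper-solution construction in the proof of Lemma \ref{l3.1} but now with a \emph{lagging} front. Since $\mathrm{supp}\,J\subset[-r,r]$ the kernel satisfies {\bf(J1)}, so Proposition \ref{p2.1} provides $(c_0,\phi^{c_0})$ with $c_0>0$; moreover, linearising \eqref{2.1} at $u^*$ (which is licit because a compactly supported $J$ has all exponential moments) yields constants $\lambda,C_0>0$ such that $u^*-\phi^{c_0}(-s)\le C_0e^{-\lambda s}$ for all $s\ge0$. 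I would fix $\eta_1\in(0,c_0)$ and set
\[\underline h(t)=\eta_1(t+\theta)+\tfrac1\lambda\ln(t+\theta),\qquad \underline u(t,x)=\Bigl(1-\tfrac1{t+\theta}\Bigr)\phi^{c_0}\bigl(x-\underline h(t)\bigr),\quad 0\le x\le\underline h(t),\]
and show that for every $\theta\ge\theta_1$ (with $\theta_1$ large) there is $T=T(\eta_1,\theta)$ making $(\underline u,\underline h)$ a lower solution of \eqref{1.2}. The logarithmic term in $\underline h$ is chosen exactly so that on $[0,\eta_1(t+\theta)]$ one has $\underline h(t)-x\ge\frac1\lambda\ln(t+\theta)$, hence $\phi^{c_0}(x-\underline h(t))\ge u^*-C_0/(t+\theta)$ there and consequently $\underline u(t,x)\ge u^*-\eta_2/(t+\theta)$ with $\eta_2:=C_0+u^*$; once the comparison $u(t+T,\cdot)\ge\underline u(t,\cdot)$ is secured this is the desired estimate.

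The heart of the proof is verifying the four inequalities in the comparison principle \cite[Theorem 3.7]{LW21}. The boundary value is immediate, $\underline u(t,\underline h(t))=\phi^{c_0}(0)=0$. For the free-boundary condition, a change of variables and the last identity in \eqref{2.1} (the missing tail being killed by the compact support of $J$ as soon as $\underline h(t)>r$) give $\mu\int_0^{\underline h(t)}\int_{\underline h(t)}^{\infty}J(x,y)\underline u(t,x)\,dy\,dx=(1-\frac1{t+\theta})c_0$, so the condition reduces to $\underline h'(t)=\eta_1+\frac1{\lambda(t+\theta)}\le(1-\frac1{t+\theta})c_0$, valid for all $t\ge0$ once $\theta_1\ge(c_0+\lambda^{-1})/(c_0-\eta_1)$. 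For the differential inequality I would split $[0,\underline h(t))$ at $x=r$. On $\{x\ge r\}$ one has $j(x)\equiv1$ and, again using compact support, $\int_0^{\underline h(t)}J(x,y)\phi^{c_0}(y-\underline h(t))\,dy=\int_{-\infty}^0 J\bigl(x-\underline h(t)-w\bigr)\phi^{c_0}(w)\,dw$; substituting this together with \eqref{2.1} reduces the inequality to
\[\tfrac1{(t+\theta)^2}\phi^{c_0}(\xi)+\Bigl(1-\tfrac1{t+\theta}\Bigr)\bigl(-(\phi^{c_0})'(\xi)\bigr)\bigl(\underline h'(t)-c_0\bigr)\le\Delta(t,x),\qquad \xi:=x-\underline h(t),\]
with $\Delta(t,x):=f\bigl((1-\frac1{t+\theta})\phi^{c_0}(\xi)\bigr)-(1-\frac1{t+\theta})f\bigl(\phi^{c_0}(\xi)\bigr)\ge0$ by the strict decrease of $f(u)/u$ in {\bf(F)}. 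For $\theta$ large $\underline h'(t)<c_0$, so the second term on the left is $\le0$; moreover $\Delta(t,x)\gtrsim\frac1{t+\theta}$ uniformly on $[0,\underline h(t)-\delta_0]$ (any fixed $\delta_0>0$), because $\Delta\approx\frac1{t+\theta}\bigl(f-uf'\bigr)(\phi^{c_0}(\xi))$ and $f-uf'>0$ on $(0,u^*]$, whereas near $x=\underline h(t)$ the order-one negative drift makes $\underline u_t<0\le\Delta$; so the inequality holds once $\theta_1$ is large. On $\{x\le r\}$ instead $j(x)\le1$, which supplies \emph{extra} dissipation: $u^*-\underline u(t,x)$ is, up to $O(e^{-\lambda\underline h(t)})$, equal to $\frac1{t+\theta}u^*$, so $f(\underline u)$ contributes $|f'(u^*)|\frac1{t+\theta}u^*$, dominating $\underline u_t\approx\frac1{(t+\theta)^2}u^*$ and all exponentially small terms, again for $\theta_1$ large.

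For the initial comparison, spreading gives $\lim_{t\to\infty}h(t)=\infty$ and $u(t,\cdot)\to u^*$ locally uniformly in $\overline{\mathbb{R}}^+$; hence for $T=T(\eta_1,\theta)$ large enough $h(T)\ge\underline h(0)$ and $u(T,\cdot)\ge u^*-\nu(\theta)\ge\underline u(0,\cdot)$ on $[0,\underline h(0)]$, where $\nu(\theta):=u^*-\phi^{c_0}(-\underline h(0))>0$. The comparison principle then yields $u(t+T,x)\ge\underline u(t,x)$ for $t\ge0$ and $x\in[0,\underline h(t)]$, and restricting to $x\in[0,\eta_1(t+\theta)]$ finishes the proof. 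I expect the main obstacle to lie in the region $x\ge r$ close to the moving front $x=\underline h(t)$, where $\underline u$, $f(\underline u)$ and $\Delta(t,x)$ all degenerate (the last only quadratically in $\xi$): one must control $\underline u_t$ there by keeping the gap $c_0-\underline h'(t)$ quantitative and uniform in $t$, using $\eta_1<c_0$ and $(\phi^{c_0})'(0^-)<0$. A secondary issue is the exponential-rate bound $u^*-\phi^{c_0}(-s)\le C_0e^{-\lambda s}$ and fixing the parameters in the right order: $\phi^{c_0},\lambda$ first, then $\eta_1$ and $\eta_2=C_0+u^*$, then $\theta_1$, then $T$.
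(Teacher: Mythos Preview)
Your approach is essentially correct but genuinely different from the paper's. The paper does \emph{not} use the semi-wave $\phi^{c_0}$ for this lemma; instead it builds an explicit piecewise-linear lower solution
\[
\underline h(t)=2\eta_1(t+\theta),\qquad
\underline u(t,x)=\begin{cases}u^*-\rho_1/\underline h(t),& 0\le x\le \underline h(t)/2,\\[1mm]
2\bigl(u^*-\rho_1/\underline h(t)\bigr)\bigl(1-x/\underline h(t)\bigr),& \underline h(t)/2\le x\le \underline h(t),\end{cases}
\]
and, crucially, does \emph{not} verify the free-boundary inequality at all: it simply takes $\eta_1<c_0/2$ and uses $h(t)/t\to c_0$ to enforce $\underline h(t)\le h(t+T)$ directly. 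The differential inequality is then checked by an elementary four-case analysis exploiting $\mathrm{supp}\,J\subset[-r,r]$ and the bound $f(u)\ge\rho\min\{u,u^*-u\}$. Your route---scaling the semi-wave by $1-\frac1{t+\theta}$ and checking all four comparison inequalities, exactly dual to Lemma~\ref{l3.1}---is cleaner conceptually and gives any $\eta_1<c_0$, but it buys that at the price of one extra ingredient the paper avoids: the exponential tail estimate $u^*-\phi^{c_0}(-s)\le C_0e^{-\lambda s}$. This is true for compactly supported $J$ (the characteristic equation $d(1-\widehat J(\lambda))-c_0\lambda-f'(u^*)=0$ has a positive root) but is not proved in the paper or its cited references, so you should justify it rather than assert it.

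Two smaller points. First, your claim that ``near $x=\underline h(t)$ the order-one negative drift makes $\underline u_t<0$'' is not right: $\underline u_t=\frac{\phi^{c_0}(\xi)}{(t+\theta)^2}-(1-\tfrac1{t+\theta})(\phi^{c_0})'(\xi)\,\underline h'(t)$ is always positive. What is true (and sufficient) is that the \emph{reduced} left-hand side $\frac{\phi^{c_0}(\xi)}{(t+\theta)^2}+(1-\tfrac1{t+\theta})\bigl(-(\phi^{c_0})'(\xi)\bigr)\bigl(\underline h'(t)-c_0\bigr)$ is negative there, because $-(\phi^{c_0})'$ is bounded below on $[-\delta_0,0]$ while $\underline h'(t)-c_0\le \eta_1-c_0<0$ uniformly. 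Second, on $[0,r]$ the phrase ``$j(x)\le1$ supplies extra dissipation'' is misleading: the point is rather that $\int_0^{\underline h(t)}J(x,y)\,dy=j(x)$ there, so the nonlocal term reduces to $d\int_0^{\underline h(t)}J(x,y)[\underline u(t,y)-\underline u(t,x)]\,dy$, which is $O(e^{-\lambda\underline h(t)})$ by your tail estimate; the inequality then follows from $f(\underline u)\sim |f'(u^*)|u^*/(t+\theta)$ dominating $\underline u_t=O((t+\theta)^{-2})$.
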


\begin{proof} Take $\theta\gg 1$ to be determined later and
  \[0<\eta_1<\min\kk\{\frac{c_0}2,\,\frac{\rho}{8},\,\frac{\rho r}{12},\,\frac{dr}{36}\int_{\frac{2r}{3}}^rJ(y)\dy\rr\}, \;\;\;0<\rho_1<\eta_1\theta u^*.\]
We define
\[\underline{h}(t)=2\eta_1(t+\theta), ~ ~ \underline{u}(t,x)=\left\{\begin{aligned}
&u^*-{\rho_1}/{\underline{h}(t)},& &t\ge0, ~ x\in\kk[0,\,{\underline{h}(t)}/2\rr],\\[2mm]
&2\left[u^*-{\rho_1}/{\underline{h}(t)}\right]\left[1-{x}/{\underline{h}(t)}\right],& &t\ge0, ~ x\in\kk[{\underline{h}(t)}/2,\,\underline{h}(t)\rr].
\end{aligned}\right.\]
Obviously, $\underline{u}$ is continuous and nonnegative for $t\ge0$ and $x\in[0,\underline{h}(t)]$, and $\underline{u}_t$ is continuous for $t>0$ and $x\in[0,\underline{h}(t)]\setminus \{\underline{h}(t)/2\}$.
We will prove that there exist suitable $\theta$ and $T$ such that
\bes\label{3.3}
\left\{\begin{aligned}
&\underline u_t\le d\dd\int_{0}^{\underline h(t)}J(x,y)\underline u(t,y)\dy-dj(x)\underline u+ f(\underline u), && t>0,~x\in[0,\underline{h}(t)]\setminus\kk\{{\underline{h}(t)}/2\rr\},\\
&\underline u(t,\underline h(t))=0,&& t>0,\\
&\underline h(t)\le h(t+T),\;t\ge0; ~ ~ ~ ~ ~ \underline u(0,x)\le u(T,x),&& x\in[0,\underline h(0)].
\end{aligned}\right.
 \ees
If \eqref{3.3} is satisfied, we can compare $\underline{u}(t,x)$ and $u(t+T,x)$ over $\{(t,x):t\ge0, ~ x\in[0,\underline{h}(t)]\}$ to derive $u(t+T,x)\ge\underline{u}(t,x)$ for $t\ge0$ and $x\in[0,\underline{h}(t)]$ which implies our assertion.

We first verify the first inequality of \eqref{3.3}. Take $\theta\gg 1$ such that $\eta_1\theta\gg r$. We will discuss it in four cases.
To save space, in the following we set
 \[p(t)=u^*-\frac{\rho_1}{\underline{h}(t)},\;\;\; q(x,t)=1-\frac{x}{\underline{h}(t)}.\]

{\bf  Case 1}: $x\in[0,\,{\underline{h}(t)}/2-r]$. It follows from \eqref{3.2} that
  \bess d\int_{0}^{\underline h(t)}\!\!J(x,y)\underline u(t,y)\dy-dj(x)\underline u+ f(\underline u)=d\int_{0}^{\frac{\underline h(t)}2}\!\!J(x,y)\left(\underline u(t,y)-\underline{u}\right)\dy+f(\underline{u})
  =f(\underline{u})\ge \frac{\rho\rho_1}{\underline{h}(t)}.
  \eess
  On the other hand, for $x\in[0,\,{\underline{h}(t)}/2-r]$, we have
\[\underline{u}_t(t,x)=\frac{\rho_1\underline{h}'(t)}{\underline{h}^2(t)}=\frac{\rho_1}{2\eta_1(t+\theta)^2}\le\frac{\rho\rho_1}{\underline{h}(t)}\]
provide that $\rho\theta\ge1$. The first inequality of \eqref{3.3} holds in Case 1.

{\bf Case 2}: $x\in[{\underline{h}(t)}/2-r,\,{\underline{h}(t)}/2+r]\setminus \{{\underline{h}(t)}/2\}$.
When $x\in[{\underline{h}(t)}/{2}-r,\,{\underline{h}(t)}/{2}]$, we have
 \bess
 \int_{0}^{\underline h(t)}\!\!J(x,y)\underline u(t,y)\dy-j(x)\underline u
 &\ge&\int_{-r}^r\!J(y)\underline{u}(t,x+y)\dy-p(t)\\
 &=&\left\{\!\int_{-r}^{\frac{\underline{h}(t)}2-x}
 +\int_{\frac{\underline{h}(t)}2-x}^r\!\right\}\!J(y)\underline{u}(t,x+y)\dy
 -p(t)\!\int_{-r}^r\!J(y)\dy\quad\;\\
 &=&\int_{\frac{\underline{h}(t)}2-x}^rJ(y)\underline{u}(t,x+y)\dy
 -p(t)\!\int_{\frac{\underline{h}(t)}2-x}^rJ(y)\dy\\
 &=&p(t)\int_{\frac{\underline{h}(t)}2-x}^rJ(y)
 \left[2q(x+y,t)-1\right]\dy\\
 &\ge& p(t)\int_{\frac{\underline{h}(t)}2-x}^rJ(y)
 \left[2q(r+\underline{h}(t)/2,\,t)-1\right]\dy\ge\frac{-2ru^*}{\underline{h}(t)}.
 \eess
When $x\in{\ccc(}{\underline{h}(t)}/2,{\underline{h}(t)}/2+r\big]$, we derive
 \bess
 \underline{u}(t,x)=2p(t)q(x,t)\int_{-r}^rJ(y)\dy-2p(t)
 \int_{-r}^rJ(y)\frac{y}{\underline{h}(t)}\dy
 =2p(t)\int_{-r}^rJ(y)q(x+y,\,t)\dy,
 \eess
and furthermore,
 \bess
 \int_{0}^{\underline h(t)}\!\!J(x,y)\underline u(t,y)\dy-j(x)\underline u
 &\ge&\left\{\!\int_{-r}^{\frac{\underline{h}(t)}2-x}
 +\int_{\frac{\underline{h}(t)}2-x}^r\!\right\}
 J(y)\underline{u}(t,x+y)\dy-2p(t)q(x,t)\!\int_{-r}^r\!J(y)\quad\;\\
 &=&p(t)\int_{-r}^{\frac{\underline{h}(t)}2-x}J(y)\left[1
 -2q(x+y,t)\right]\dy
 \ge \frac{-2ru^*}{\underline{h}(t)}.
 \eess
It is easy to see that, when $\theta\ge2r/\eta_1$,
\bess
\min\left\{\underline u,\,u^*-\underline u\right\}&\ge& \min\left\{2p(t)q({r+\underline{h}(t)}/2,\,t),\;\frac{\rho_1}{\underline{h}(t)}\right\}\\
 &\ge&\frac{\rho_1}{\underline{h}(t)}\min\left\{q(2r,\,t),\;1\right\}\\
 &\ge&\frac{\rho_1}{\underline{h}(t)}\min\left\{1-\frac{r}{\eta_1\theta},\;1\right\}\\
 &\ge&\frac{\rho_1}{2\underline{h}(t)} ~ ~ ~ {\rm for} ~ x\in[{\underline{h}(t)}/2-r,\,{\underline{h}(t)}/2+r]. \eess
Combining these estimates with \eqref{3.2} we have that, for $x\in[{\underline{h}(t)}/2-r,\,{\underline{h}(t)}/2+r]$,
 \bess
 d\int_{0}^{\underline h(t)}J(x,y)\underline u(t,y)\dy-dj(x)\underline u(t,x)+ f(\underline u)\ge \frac{-2rdu^*}{\underline{h}(t)}+f(\underline{u})\ge \frac{\rho\rho_1}{2\underline{h}(t)}-\frac{2rdu^*}{\underline{h}(t)}.
 \eess
Additionally, we easily get $\underline{u}_t=\frac{2\rho_1\eta_1}{\underline{h}^2(t)}$ for $x\in[\frac{\underline{h}(t)}2-r,\, \frac{\underline{h}(t)}2)$,
 and
 \[\underline{u}_t=\frac{2\rho_1\underline{h}'(t)}{\underline{h}^2(t)}
 q(x,t)+2p(t)\frac{x\underline{h}'(t)}{\underline{h}^2(t)}
 \le\frac{4\rho_1\eta_1}{\underline{h}^2(t)}+\frac{4u^*\eta_1}{\underline{h}(t)}
 \;\;\;\text{for}\;\; x\in\kk(\frac{\underline{h}(t)}2,\frac{\underline{h}(t)}2+r\rr].\]
Hence, the first inequality of \eqref{3.3} holds for Case 2 provided that $\rho\rho_1\ge4rdu^*+8\rho_1\eta_1+8u^*\eta_1$,
which can be guaranteed by choosing $\rho_1,\,\theta$ large enough.

{\bf Case 3}: $x\in[{\underline{h}(t)}/2+r,\, \underline{h}(t)-r]$. Direct computations yield
\bess
\int_{0}^{\underline h(t)}\!\!J(x,y)\underline u(t,y)\dy-\underline u
&=&\int_{-r}^r\!\!J(y)\underline u(t,x+y)\dy-\underline u\\
&=&2\int_{-r}^r\!\!J(y)p(t)
q(x+y,t)\dy-2p(t)
q(x,t)\\
&=&2\int_{-r}^rJ(y)p(t)\frac{-y}{\underline{h}(t)}\dy=0,
\eess
and
\[\min\{\underline u,\,u^*-\underline u\}\ge\min\kk\{2p(t)
\frac{r}{\underline{h}(t)},\; \frac{\rho_1}{\underline{h}(t)}\rr\}
\ge\min\kk\{\frac{ru^*}{\underline{h}(t)},\,\frac{\rho_1}{\underline{h}(t)}\rr\}
\ge\frac{ru^*}{\underline{h}(t)}\]
with $\rho_1\ge ru^*$.
Moreover,
\bess\underline{u}_t=\frac{2\rho_1\underline{h}'(t)}{\underline{h}^2(t)}
q(x,t)+2p(t)
\frac{x\underline{h}'(t)}{\underline{h}^2(t)}
\le\frac{4\rho_1\eta_1}{\underline{h}^2(t)}+\frac{4u^*\eta_1}{\underline{h}(t)}\le\frac{6u^*\eta_1}{\underline{h}(t)}.
\eess
Then from the choice of $\eta_1$ we see that the first inequality of \eqref{3.3} holds true in Case 3.

{\bf Case 4}: $x\in[\underline{h}(t)-r,\, \underline{h}(t)]$. In this case we have
\bess
 \int_{0}^{\underline h(t)}\!\!J(x,y)\underline u(t,y)\dy-\underline u
 &=&\int_{-r}^r\!J(y)\underline u(t,x+y)\dy-\underline u-2p(t)\!\int_{\underline h(t)-x}^r\!\!J(y)q(x+y,t)\dy\\
&=&2p(t)\int_{-r}^rJ(y)\frac{{-y}}{\underline{h}(t)}\dy-2p(t)\int_{\underline h(t)-x}^rJ(y)q(x+y,t)\dy\\
&\ge&-u^*\int_{\underline h(t)-x}^rJ(y)q(x+y,t)\dy\ge0,
\eess
and
\[\min\{\underline u,\,u^*-\underline u\}\ge \min\kk\{u^*q(x,t),\, \frac{\rho_1}{\underline{h}(t)}\rr\}\ge u^*q(x,t)\;\;\;\text{if}\;\;\rho_1\ge ru^*.\]
Therefore, we obtain
 \bess
-du^*\int_{\underline h(t)-x}^rJ(y)q(x+y,t)\dy+f(\underline{u})&\ge& \rho u^*q(x,t)\ge \frac{\rho u^*r}{2\underline{h}(t)}\;\;\;\text{when}\;\;x\in[\underline{h}(t)-r,\, \underline{h}(t)-{r}/2],
\eess
and
\bess
-du^*\int_{\underline h(t)-x}^rJ(y)q(x+y,t)\dy+f(\underline{u})&\ge&-du^*\int_{\underline h(t)-x}^rJ(y)q(x+y,t)\dy\\
&\ge&-du^*\int_{\frac{2r}{3}}^rJ(y)q(x+y,t)
\dy\\
&\ge&\frac{dru^*}{6\underline{h}(t)}\int_{\frac{2r}{3}}^rJ(y)\dy\;\;\;\text{when}\;\;x\in[\underline{h}(t)-{r}/2,\, \underline{h}(t)].
 \eess
It then follows that, for $x\in[\underline{h}(t)-r,\, \underline{h}(t)]$,
\bess
d\int_{0}^{\underline h(t)}J(x,y)\underline u(t,y)\dy-dj(x)\underline u(t,x)+ f(\underline u)\ge \frac{\rho_2}{\underline{h}(t)}
\eess
with $\rho_2:=\min\{\frac{\rho u^*r}2,\,\frac{dru^*}{6}\int_{\frac{2r}{3}}^rJ(y)\dy\}$.
Similarly to Case 3, we have $\underline{u}_t\le\frac{6u^*\eta_1}{\underline{h}(t)}$. Thus  the first inequality of \eqref{3.3} holds in this case with adequate choice of $\eta_1$.

Due to the above analysis, one can see that the first inequality of \eqref{3.3} holds if $\theta$ is large suitably, say $\theta\ge\theta_1$ which depends  only on initial data.

For any $\theta\ge\theta_1$ and $\eta_1$ chosen as above, we next show that there is $T>0$ such that the last two inequalities in \eqref{3.3} hold. Clearly, $\underline{u}(0,x)\le u^*-\frac{\rho_1}{2\eta_1\theta}$. Since spreading happens for \eqref{1.2}, $J$ is compactly supported and $2\eta_1<c_0$, there is $T>0$ depending on $\eta_1$ and $\theta$ such that $h(t+T)\ge2\eta_1(t+\theta)=\underline{h}(t)$ for $t\ge0$ and $u(T,x)\ge\underline{u}(0,x)$ for $x\in[0,\underline{h}(0)]$. Then we see that $u(t+T,\underline{h}(t))>0=\underline{u}(t,\underline{h}(t))$ for $t>0$.
So the proof is finished.
\end{proof}

\begin{lemma}\label{l3.3}Under the assumptions of Lemma \ref{l3.2}, there exist $C>0$ and $T\gg1$ such that
\[h(t)-c_0t\ge-C\ln t ~ ~ {\rm for } ~ t\ge T.\]
\end{lemma}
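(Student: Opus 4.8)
The plan is to build, in the spirit of Lemma~\ref{l3.1}, a \emph{lower} solution of \eqref{1.2} whose free boundary trails $c_0t$ by only a logarithm. Fix a small $\ep_0>0$, pick $l_1>0$ with $\phi^{c_0}(-l_1)\ge(1-\ep_0)u^*$ (possible since $\phi^{c_0}(-\yy)=u^*$), and then choose positive constants $K',K,\theta,L$ in this order, with $\theta$ and $L$ large. Set
\[\ep(t)=\frac{K'}{t+\theta},\qquad \gamma(t)=K\ln(t+\theta),\qquad \underline h(t)=c_0t-\gamma(t)+L,\qquad \underline u(t,x)=\kk(1-\ep(t)\rr)\phi^{c_0}\kk(x-\underline h(t)\rr),\]
for $t\ge0$, $0\le x\le\underline h(t)$, where $\phi^{c_0}$ is the semi-wave of Proposition~\ref{p2.1}. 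With $\theta>K/c_0$ the function $\underline h$ is increasing, and $L$ will be large enough that $\underline h(0)>l_1+r$, so the three ranges below make sense for every $t\ge0$. The goal is to verify that $(\underline u,\underline h)$ is a lower solution of \eqref{1.2} on $[0,\yy)$ with $\underline h(0)\le h(T)$ and $\underline u(0,\cdot)\le u(T,\cdot)$ on $[0,\underline h(0)]$ for a suitably large first time $T$; the comparison principle \cite[Theorem~3.7]{LW21} then yields $h(t+T)\ge\underline h(t)$, and relabelling gives $h(t)\ge c_0t-C\ln t$ for $t$ large, with $C=2K$.

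The Stefan inequality is the first place compact support of $J$ helps. Substituting $\xi=x-\underline h(t)$, $\zeta=y-\underline h(t)$ and comparing with the identity $c_0=\mu\int_{-\yy}^{0}\int_{0}^{\yy}J(\xi-\zeta)\phi^{c_0}(\xi)\,{\rm d}\zeta\,{\rm d}\xi$ contained in \eqref{2.1}, the only discrepancy is an integral over $\{\xi<-\underline h(t)\}$, which vanishes since $\underline h(t)>r$ and $J$ is supported in $[-r,r]$; hence the Stefan integral for $\underline u$ equals exactly $(1-\ep(t))c_0$, and, since $\underline h'(t)=c_0-K/(t+\theta)$, the required inequality becomes $K/(t+\theta)\ge c_0\ep(t)$, i.e. $K\ge c_0K'$. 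For the differential inequality I would argue as in Lemma~\ref{l3.1}: use \eqref{2.1} to replace $d\int_{-\yy}^{0}J(\xi-\eta)\phi^{c_0}(\eta)\,{\rm d}\eta-d\phi^{c_0}(\xi)+f(\phi^{c_0}(\xi))$; the $c_0(\phi^{c_0})'$ terms then cancel against part of $\underline h'(t)(\phi^{c_0})'(\xi)$, leaving the requirement
\[\kk(1-\ep(t)\rr)\gamma'(t)\,(\phi^{c_0})'(\xi)-\ep'(t)\phi^{c_0}(\xi)\ \le\ \mathcal R(t,x)\qquad\text{on }[0,\underline h(t)),\]
where
\[\mathcal R(t,x):=d(1-\ep)(1-j(x))\phi^{c_0}(\xi)-(1-\ep)d\!\!\int_{-\yy}^{-\underline h(t)}\!\!\!J(\xi-\eta)\phi^{c_0}(\eta)\,{\rm d}\eta+f\kk((1-\ep)\phi^{c_0}(\xi)\rr)-(1-\ep)f(\phi^{c_0}(\xi)).\]

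Since $(\phi^{c_0})'\le0$, the left side's first term is $\le0$ — this is exactly the room bought by letting $\underline h$ lag — and $-\ep'(t)\phi^{c_0}(\xi)=O((t+\theta)^{-2})$ is negligible. I would split $[0,\underline h(t))$ into three ranges. On the front range $[\underline h(t)-l_1,\underline h(t)]$ one has $x>r$, so the two integral terms in $\mathcal R$ vanish by compact support, and $-(\phi^{c_0})'(\xi)\ge\ep_1:=\inf_{[-l_1,0]}(-(\phi^{c_0})')>0$ (as in Lemma~\ref{l3.1}); hence the left side is $\le-\tfrac12 K\ep_1/(t+\theta)+O((t+\theta)^{-2})$, while $|f((1-\ep)v)-(1-\ep)f(v)|\le\bar Cv\ep$ gives $\mathcal R\ge-\bar Cu^*K'/(t+\theta)$, so this closes once $K\ep_1$ exceeds a fixed multiple of $K'$. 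On the middle range $[r,\underline h(t)-l_1]$, again $x>r$ and $\phi^{c_0}(\xi)\ge(1-\ep_0)u^*$, so $\mathcal R=f((1-\ep)v)-(1-\ep)f(v)$ with $v=\phi^{c_0}(\xi)$; since $\psi(\sigma):=f((1-\sigma)v)-(1-\sigma)f(v)$ satisfies $\psi(0)=0$ and $\psi'(0)=f(v)-vf'(v)\ge m>0$ (using only that $f(u)/u$ is strictly decreasing), one gets $\mathcal R\ge\tfrac12 m\,\ep(t)$, which beats the left side once $\theta$ is large. The delicate range is $[0,r]$: there $\xi\to-\yy$, and a change of variables gives $\int_{-\yy}^{-\underline h(t)}J(\xi-\eta)\phi^{c_0}(\eta)\,{\rm d}\eta\le u^*(1-j(x))$, so the first two terms of $\mathcal R$ combine into $\ge-d\,\omega(t)$ with $\omega(t):=u^*-\phi^{c_0}(r-\underline h(t))$, while $\phi^{c_0}(\xi)$ stays within $\omega(t)$ of $u^*$, giving $\mathcal R\ge\tfrac12 m\,\ep(t)-d\,\omega(t)$; thus one needs $d\,\omega(t)\le\tfrac14 mK'/(t+\theta)$.

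The main obstacle is precisely this last estimate — a decay rate for $\phi^{c_0}(\xi)\to u^*$ as $\xi\to-\yy$ — and it is here that compact support of $J$ is essential. Linearising \eqref{2.1} at $u^*$ produces the characteristic function $\Phi(\lambda)=c_0\lambda+d\int_{\R}J(z)e^{-\lambda z}\,{\rm d}z-d+f'(u^*)$, which is finite for every $\lambda$ because $J$ has bounded support, satisfies $\Phi(0)=f'(u^*)<0$ and $\Phi(+\yy)=+\yy$, hence has a root $\lambda_0>0$; a standard sub/supersolution comparison then gives $u^*-\phi^{c_0}(\xi)\le Ae^{\lambda_0\xi}$ for $\xi\le0$ (this is the step that genuinely needs the hypothesis on $J$, and the reason the kernels $J\approx|x|^{-\gamma}$ are treated separately in Section~4). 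Consequently $\omega(t)$ is exponentially small in $t$, so $d\,\omega(t)\le\tfrac14 mK'/(t+\theta)$ holds for all $t$ beyond some bound; for the remaining bounded range one enlarges $L$, which drives $r-\underline h(t)$ deep into the region where $\phi^{c_0}\approx u^*$. Finally, the initial comparison is easy: $\underline h(0)=L-K\ln\theta$ is a fixed number, so by spreading (equivalently Theorem~\ref{t2.1}) one may choose $T$ so large that $u(T,\cdot)\ge(1-\ep(0))u^*$ on $[0,\underline h(0)]$ and $h(T)\ge\underline h(0)$; then $\underline u(0,x)=(1-\ep(0))\phi^{c_0}(x-\underline h(0))\le(1-\ep(0))u^*\le u(T,x)$ on $[0,\underline h(0)]$, and the comparison principle finishes the proof.
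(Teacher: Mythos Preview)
Your construction is correct in spirit and the front and middle ranges are handled exactly as in the paper, but your route diverges from the paper's at the crucial step, and the divergence creates a real gap.

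\textbf{What the paper does differently.} The paper does \emph{not} attempt to verify the subsolution differential inequality on all of $[0,\underline h(t))$. Instead it verifies the inequality only on $[\eta_0\underline h(t),\underline h(t))$, where $\eta_0=\eta_1/c_0$ comes from Lemma~\ref{l3.2}; on that region $x>r$ once $\theta$ is large, so the boundary terms you call $\mathcal R$ reduce to the pure nonlinear discrepancy, exactly as in your front and middle ranges. On the complementary region $[0,\eta_0\underline h(t)]$ the paper bypasses the differential inequality altogether: since $\eta_0\underline h(t)\le\eta_1(t+\theta)$, Lemma~\ref{l3.2} gives directly
\[
\underline u(t,x)\le\Big(1-\frac{l_1}{t+\theta}\Big)u^*\le u^*-\frac{\eta_2}{t+\theta}\le u(t+T,x)
\]
once $l_1\ge\eta_2/u^*$. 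The two pieces are then glued by the \emph{generalized} comparison principle \cite[Theorem~3.8]{LW21}, not Theorem~3.7. In other words, Lemma~\ref{l3.2} is not incidental scaffolding; it is exactly the device that replaces your ``delicate range $[0,r]$'' analysis.

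\textbf{The gap in your approach.} Your treatment of $[0,r]$ hinges on the estimate $u^*-\phi^{c_0}(\xi)\le Ae^{\lambda_0\xi}$ for $\xi\le0$. You derive the characteristic root $\lambda_0>0$ correctly, but the passage from ``the linearisation has an exponential solution'' to ``$u^*-\phi^{c_0}$ is bounded by an exponential'' is asserted as ``a standard sub/supersolution comparison'' without proof or citation. This step is not in the paper, not in the cited reference \cite{DLZ} for the semi-wave, and is not routine: one must build an honest supersolution for the \emph{nonlinear} equation satisfied by $w=u^*-\phi^{c_0}$ on the half-line, absorbing the quadratic remainder $f(u^*-w)-f(u^*)+f'(u^*)w=O(w^2)$ and the half-line truncation of the convolution. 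It can be done, but it is a lemma in its own right, comparable in length to Lemma~\ref{l3.2}, and you have not supplied it. (Note, incidentally, that any decay faster than $1/|\xi|$ would suffice for your inequality $d\,\omega(t)\le\tfrac14 mK'/(t+\theta)$ after adjusting $K'$, so you are proving more than you need; but some quantitative rate is genuinely required, and none is available from the bare statement $\phi^{c_0}(-\infty)=u^*$.)

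\textbf{Summary.} Your argument is salvageable if you either (i) prove the tail decay of $\phi^{c_0}$, which is extra work not in the paper, or (ii) replace your $[0,r]$ analysis by the paper's use of Lemma~\ref{l3.2} together with the partial comparison principle \cite[Theorem~3.8]{LW21}. As written, the proposal has a gap at precisely the point you flag as ``the main obstacle''.
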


\begin{proof} Let $\theta\ge\theta_1$, $ l_1\ge \eta_2/u^*$ and $ l_2>0$. We define
 \bess
 \delta(t)&=&c_0\theta- l_2\left[\ln(t+\theta)-\ln \theta\right],\;\;\;\ep(t)= l_1/(t+\theta),\\
 \underline{h}(t)&=&c_0t+\delta(t), \;\; \underline{u}(t,x)=(1-\ep(t))\phi^{c_0}(x-\underline{h}(t)).
  \eess
Clearly, $\underline u(t,\underline h(t))=(1-\ep(t))\phi^{c_0}(0)=0$, and when $\theta\gg1$,
  \[c_0(t+\theta)\ge\underline{h}(t)\ge c_0(t+\theta)/2 ~ ~ ~ {\rm for } ~ t\ge0.\]
We will show that there exist suitable $\theta,\, l_1,\, l_2$ and $T>0$  such that
  \bes\label{3.4}
\left\{\begin{aligned}
&\underline u_t\le d\dd\int_{0}^{\underline h(t)}J(x,y)\underline u(t,y)\dy-dj(x)\underline u+ f(\underline u), && t>0,~\eta_0\underline{h}(t)<x<\underline h(t),\\
&\underline h'(t)\le\mu\dd\int_{0}^{\underline h(t)}\int_{\underline h(t)}^{\infty}
J(x,y)\underline u(t,x)\dy\dx,&& t>0,\\
&\underline u(t,x)\le u(t+T,x),&& t>0, ~0\le x\le \eta_0\underline{h}(t),\\
&\underline h(0)\le h(T),\;\;\underline u(0,x)\le u(T,x),&& x\in[0,\underline h(0)],
\end{aligned}\right.
\ees
where $\eta_0=\eta_1/c_0,\,\eta_1,\,\eta_2$ and $\theta_1$ are determined by Lemma \ref{l3.2}. Once \eqref{3.4} is obtained, the desired assertion can be deduced by the comparison principle (\cite[Theorem 3.8]{LW21}).

Obviously, taking $ l_2\ge  l_1c_0$ and $\theta\gg1$ we have
\bess
\mu\dd\int_{0}^{\underline h(t)}\!\int_{\underline h(t)}^{\infty}\!
J(x,y)\underline u(t,x)\dy\dx&=&\mu(1-\ep(t))\int_{0}^{\underline h(t)}\!\int_{\underline h(t)}^{\infty}\!J(x,y)\phi^{c_0}(x-\underline{h}(t))\dy\dx\\
&=&\mu(1-\ep(t))\int_{-\underline h(t)}^{0}\!\int_{0}^{\infty}\!
J(x,y)\phi^{c_0}(x)\dy\dx\\
&=&(1-\ep(t))c_0\ge c_0-\frac{ l_2}{t+\theta}=\underline{h}'(t).
\eess
Thus the second inequality in \eqref{3.4} holds.  Direct calculations show that, for $\theta\gg1$,
\bess
 \underline{u}_t&=&-\ep'(t)\phi^{c_0}(x-\underline{h}(t))-(1-\ep(t))(c_0+\delta'(t)){\phi^{c_0}}'(x-\underline{h}(t))\\
&=&(1-\ep(t))\left(d\dd\int_{-\yy}^{\underline h(t)}J(x,y)\phi^{c_0}(y-\underline{h}(t))\dy-d\phi^{c_0}(x-\underline{h}(t))+f(\phi^{c_0}(x-\underline{h}(t)))\right)\\
&& ~ ~ ~ -\ep'(t)\phi^{c_0}(x-\underline{h}(t))-(1-\ep(t))\delta'(t){\phi^{c_0}}'(x-\underline{h}(t))\\
&\le& d\dd\int_{0}^{\underline h(t)}J(x,y)\underline u(t,y)\dy-dj(x)\underline u+ f(\underline u)+B(t,x)
\eess
with
\[B(t,x)=(1-\ep(t))f(\phi^{c_0}(x-\underline{h}(t)))-f(\underline u)-\ep'(t)\phi^{c_0}(x-\underline{h}(t))-(1-\ep(t))\delta'(t){\phi^{c_0}}'(x-\underline{h}(t)).\]
Thus, to prove the first inequality in \eqref{3.4}, it remains to check $B(t,x)\le0$ for $t>0$ and $x\in(\eta_0\underline{h}(t),\underline h(t))$. Since $\phi^{c_0}(-\yy)=u^*$, for any small $\ep_0>0$ there is $\kappa>0$ such that $\phi^{c_0}(-\kappa)\ge(1-\ep_0)u^*$. Similarly to Lemma \ref{l3.1}, by the Taylor expansion, one has
\bess
  (1-\ep)f(u)-f((1-\ep)u)&=&-f((1-\ep)u^*)+(1-\ep)\left[f'(\tilde{u})-f'((1-\ep)\tilde{u})\right](u-u^*)\\
&=&f'(u^*)\ep u^*+o(\ep)+(1-\ep)\left[f'(\tilde{u})-f'((1-\ep)\tilde{u})\right](u-u^*)\\
&\le& f'(u^*)\ep u^*+o(\ep)-(1-\ep)C_1\ep u^*(u-u^*),
\eess
where $C_1$ depends only on $f$. For $\eta_0\underline{h}(t)<x\le\underline{h}(t)-\kappa$, we have $\phi^{c_0}(x-\underline{h}(t))-u^*\ge-\ep_0u^*$. So
\bess
(1-\ep(t))f(\phi^{c_0}(x-\underline{h}(t)))-f(\underline u)&\le& f'(u^*)\ep u^*+o(\ep)-(1-\ep)C_1\ep u^*(\phi^{c_0}(x-\underline{h}(t))-u^*)\\
&\le&  f'(u^*)\ep u^*+o(\ep)+(1-\ep)C_1\ep {u^*}^2\ep_0\le-C_2\ep,
\eess
where $C_2$ depends only on $f$, $0<\ep_0\ll 1$ and $\theta\gg 1$. It follows that
\bess
B(t,x)&\le& (1-\ep(t))f(\phi^{c_0}(x-\underline{h}(t)))-f(\underline u)-\ep'(t)\phi^{c_0}(x-\underline{h}(t))\\
&\le&-C_2\ep+\frac{ l_1}{(t+\theta)^2}u^*\\
&=&\ep\left(-C_2+\frac{u^*}{t+\theta}\right)\le0 \;\;\;\text{for}\;\; \eta_0\underline{h}(t)<x\le\underline{h}(t)-\kappa,
\eess
and
\bess
B(t,x)&\le&-\ep'(t)\phi^{c_0}(x-\underline{h}(t))-(1-\ep(t))\delta'(t){\phi^{c_0}}'(x-\underline{h}(t))\\
&\le&-\frac{(1-\ep)C_3 l_2}{t+\theta}+\frac{ l_1u^*}{(t+\theta)^2}\\
&\le&-\frac{C_3 l_2}{2(t+\theta)}+\frac{ l_1u^*}{(t+\theta)^2}\\
&=&\frac{1}{t+\theta}\left(\frac{-C_3 l_2}2+\frac{ l_1u^*}{t+\theta}\right)\le0\;\;\;\text{for}\;\; \underline{h}(t)-\kappa\leq x<\underline{h}(t),
\eess
where $C_3=\inf_{x\in[-\kappa,0]}(-{\phi^{c_0}}'(x))>0$.

We now show the last two inequalities in \eqref{3.4}. For these $ l_1,\, l_2$ and $\theta$ taken as above, thanks to $ l_1\ge \eta_2/u^*$ and $\eta_0\underline{h}(t)\le \eta_1(t+\theta)$, it follows from Lemma \ref{l3.2} that
\[\underline{u}(t,x)\le \kk(1-{ l_1}/(t+\theta)\rr)u^*\le u^*-{\eta_2}/(t+\theta)\le u(t+T,x) ~ ~ {\rm for } ~ t>0, ~ x\in[0,\eta_0\underline{h}(t)].\]
Moreover, since spreading happens, one can choose $T$ large enough such that $\underline{h}(0)\le h(T)$ and $\underline{u}(0,x)\le (1-\ep(0))u^*\le u(T,x)$ for $x\in[0,\underline{h}(0)]$. Thus \eqref{3.4} holds, and the proof is complete.
\end{proof}
Clearly, Theorem \ref{t3.1} follows from Lemmas \ref{l3.1} and \ref{l3.3}.

\section{Rate of accelerated spreading}
\setcounter{equation}{0} {\setlength\arraycolsep{2pt}

In this section, we assume that $J$ satisfies ${\bf(J^\gamma)}$.
It is easy to show that the condition {\bf(J)} holds if and only if $\gamma>1$, and the condition {\bf(J1)} holds if and only if $\gamma>2$. Now we focus on the case with $\gamma\in(1,2]$ which implies that accelerated spreading can happen for the problem  \eqref{1.2}. Enlightened by \cite{DN212}, we have the following theorem which will be proved by several lemmas.

\begin{theorem}\label{t4.1} Let the condition ${\bf(J^\gamma)}$ hold with $\gamma\in(1,2]$, $f$ satisfy the condition {\bf(F)} and spreading happen for \eqref{1.2}. Then, when $t\gg1$,
\bess\left\{\begin{aligned}
&h(t)\approx t^{\frac{1}{\gamma-1}} ~ ~ {\rm and} ~ \lim_{t\to\yy}\max_{[0,\,s(t)]}|u(t,x)- u^*|=0 ~ {\rm for ~ any } ~ 0\le s(t)=t^{\frac{1}{\gamma-1}}o(1) ~ ~ {\rm  ~ if ~}\gamma\in(1,2),\\
&h(t)\approx t\ln t ~ ~ {\rm and} ~ \lim_{t\to\yy}\max_{[0,\,s(t)]}|u(t,x)- u^*|=0 ~ {\rm for ~ any } ~ 0\le s(t)=(t\ln t) o(1)  ~ ~ {\rm  ~ if ~ }\gamma=2.
\end{aligned}\right.\eess
\end{theorem}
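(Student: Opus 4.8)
The plan is to prove $h(t)\approx\rho(t)$, where $\rho(t):=(t+\theta)^{1/(\gamma-1)}$ if $\gamma\in(1,2)$ and $\rho(t):=(t+\theta)\ln(t+\theta)$ if $\gamma=2$ (note $\rho(t)\approx t^{1/(\gamma-1)}$, resp. $t\ln t$), by sandwiching $h(t)$ between a super-solution and a sub-solution of \eqref{1.2}, and then to read the profile statement off the sub-solution. The heuristic behind the exponents: if $u\approx u^*$ on most of $[0,h(t)]$, then by ${\bf(J^\gamma)}$
\[h'(t)\approx\mu u^*\dd\int_0^{h(t)}\!\!\int_{h(t)}^{\yy}J(x,y)\dy\dx\approx\mu u^*\dd\int_0^{h(t)}\big(1\wedge(h(t)-x)^{1-\gamma}\big)\dx,\]
which is of order $h(t)^{2-\gamma}$ when $\gamma\in(1,2)$ and of order $\ln h(t)$ when $\gamma=2$; solving $h'\approx h^{2-\gamma}$ (resp. $h'\approx\ln h$) gives $h\approx\rho$.

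\textbf{Upper bound.} I would take the flat super-solution $\bar u\equiv(1+\delta)u^*$ with small $\delta>0$ and $\bar h(t)=A\rho(t)$. Since $f((1+\delta)u^*)<0$ and $\int_0^{\bar h}J(x,y)\dy-j(x)=-\int_{\bar h}^{\yy}J(x,y)\dy\le0$, the interior inequality $\bar u_t\ge d\int_0^{\bar h}J\bar u\dy-dj(x)\bar u+f(\bar u)$ and $\bar u(t,\bar h(t))\ge0$ are immediate. The only computation is the free-boundary inequality: the upper bound in ${\bf(J^\gamma)}$ gives $\int_0^{\bar h(t)}\!\int_{\bar h(t)}^{\yy}J(x,y)\dy\dx\le C_1\bar h(t)^{2-\gamma}+C_2$ (resp. $\le C_1\ln\bar h(t)+C_2$), so since $\bar h'(t)=\frac{A^{\gamma-1}}{\gamma-1}\bar h(t)^{2-\gamma}$ (resp. $A[\ln(t+\theta)+1]$), the inequality $\bar h'\ge\mu(1+\delta)u^*(C_1\bar h^{2-\gamma}+C_2)$ holds once $A$ and then $\theta$ are large. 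Choosing $T$ with $u(T,\cdot)\le(1+\delta)u^*$ on $[0,h(T)]$ (possible as $\limsup_{t\to\yy}u\le u^*$ uniformly) and then $\theta$ with $\bar h(0)\ge h(T)$, the comparison principle \cite[Theorem 3.7]{LW21} gives $h(t+T)\le\bar h(t)$, hence $h(t)\le C\rho(t)$.

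\textbf{Lower bound and profile.} In the absence of a semi-wave (${\bf(J1)}$ fails), I would use a tent-shaped sub-solution of the Lemma~\ref{l3.2} type, $\underline h(t)=c\rho(t)$ and
\[\underline u(t,x)=\big(u^*-\eta(t)\big)\min\kk\{1,\ 2\kk(1-x/\underline h(t)\rr)\rr\},\qquad\eta(t):=K\,\underline h(t)^{1-\gamma}\,(\to0\text{ since }\gamma>1),\]
so $\underline u\equiv u^*-\eta(t)$ on $[0,\underline h(t)/2]$ and $\underline u(t,\underline h(t))=0$. The interior inequality $\underline u_t\le d\int_0^{\underline h}J\underline u\dy-dj(x)\underline u+f(\underline u)$ is checked region by region as in Lemma~\ref{l3.2}. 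The new phenomenon compared with the compactly supported case is that every point feels a nonzero leakage $-d\underline u(x)\int_{\underline h(t)}^{\yy}J(x,y)\dy$, of size $\approx\underline h(t)^{1-\gamma}\approx\eta(t)/K$ near $x=0$; but this is dominated by $f(\underline u)\ge\tfrac{-f'(u^*)}2\eta(t)$ once $K$ is large (using $f'(u^*)<0$), while $|\underline u_t|=O(|\eta'(t)|)=o(\eta(t))$ for $\theta$ large; the kink region around $x=\underline h(t)/2$ and the zone $x\approx\underline h(t)$ are handled as in Cases 2--4 of Lemma~\ref{l3.2}, now using the algebraic tail ${\bf(J^\gamma)}$ in place of compact support. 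The free-boundary inequality $\underline h'(t)\le\mu\int_0^{\underline h}\int_{\underline h}^{\yy}J\underline u$ follows from the lower bound in ${\bf(J^\gamma)}$, namely $\int_0^{\underline h(t)}\!\int_{\underline h(t)}^{\yy}J(x,y)\dy\dx\ge c_1\underline h(t)^{2-\gamma}$ (resp. $\ge c_1\ln\underline h(t)$) against $\underline h'(t)=\frac{c^{\gamma-1}}{\gamma-1}\underline h(t)^{2-\gamma}$ (resp. $c[\ln(t+\theta)+1]$), which forces $c$ to be small. Finally, since spreading occurs, $h(t)\to\yy$ and $u(t,\cdot)\to u^*$ locally uniformly, so a large $T$ makes $h(T)\ge\underline h(0)$ and $u(T,\cdot)\ge\underline u(0,\cdot)$ on $[0,\underline h(0)]$; comparison \cite[Theorem 3.8]{LW21} then yields $h(t+T)\ge\underline h(t)$ and $u(t+T,x)\ge\underline u(t,x)$ for all $t\ge0$, $x\in[0,\underline h(t)]$. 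The first gives $h(t)\ge c\rho(t)$, which with the upper bound proves $h(t)\approx\rho(t)$. For the profile, on $[0,\underline h(t)/2]$ we have $u(t+T,x)\ge u^*-\eta(t)$ with $\eta(t)\to0$; given $s(t)=\rho(t)o(1)$, for $t$ large $s(t)\le\frac c3\rho(t)<\underline h(t-T)/2$, so $u^*-\eta(t-T)\le u(t,x)$ there, and together with $\limsup u\le u^*$ uniformly this gives $\max_{[0,s(t)]}|u(t,x)-u^*|\to0$.

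\textbf{Main obstacle.} The delicate step is the lower-bound sub-solution: one must find a single admissible choice of the finitely many parameters ($\delta$ or $K$, then $c$, then $\theta$, then $T$) for which (i) $c$ is small enough for the free-boundary inequality, (ii) $K$ is large enough that $f(\underline u)$ absorbs the two-sided fat-tail leakage near $x=0$ and near $x=\underline h(t)$, and (iii) the nonlocal ``diffusion'' term in the transition and boundary regions is controlled although $\int zJ(z)\,{\rm d}z$ converges only conditionally for $\gamma\le2$; all three must hold simultaneously and uniformly for $t\ge0$. The super-solution and the deduction of the profile are comparatively routine.
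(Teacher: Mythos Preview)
Your overall architecture matches the paper's: an upper bound via a comparison argument and a lower bound via a tent-shaped sub-solution, with the profile read off the sub-solution. The upper bound is fine (the paper is even more direct: it simply uses $u\le 2u^*$ for $t\ge T$ and integrates the resulting differential inequality for $h$, but your flat super-solution works as well).

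There is, however, a genuine gap in the lower bound when $\gamma=2$. With your tent $\underline u=(u^*-\eta(t))\min\{1,\,2(1-x/\underline h)\}$, the transition zone occupies the full right half $[\underline h/2,\underline h]$. Compute the free-boundary integral with weight $\underline u$: writing $w=\underline h-x$, the flat-region contribution is
\[
(u^*-\eta)\int_{\underline h/2}^{\underline h}\!\!\int_w^\infty J(z)\,dz\,dw\;\approx\;(u^*-\eta)\,\varsigma_1\!\int_{\underline h/2}^{\underline h}\frac{dw}{w}=(u^*-\eta)\varsigma_1\ln 2,
\]
and the transition-region contribution is likewise $O(1)$, since there $w\!\int_w^\infty J\approx\varsigma_1$. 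Thus $\mu\!\int_0^{\underline h}\!\int_{\underline h}^\infty J\,\underline u=O(1)$, whereas $\underline h'(t)=c[\ln(t+\theta)+1]\to\infty$: the free-boundary inequality cannot hold. The paper fixes this by taking a \emph{narrow} transition layer of width $(t+\theta)^\alpha$ with $\alpha\in(0,1)$ (Lemma~\ref{l4.3}); then the plateau fills almost all of $[0,\underline h]$ and its contribution produces the needed $\ln(t+\theta)$.

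For $\gamma\in(1,2)$ the free-boundary count is correct, but your handling of the interior inequality in the transition zone is not: Cases~2--4 of Lemma~\ref{l3.2} rely essentially on compact support (e.g.\ the exact cancellation $\int_{-r}^r yJ(y)\,dy=0$ on a window containing the full support), which fails for algebraic tails. The paper replaces this by Proposition~\ref{p4.1} (giving $\int_0^{\underline h}J(x,y)\underline u\,dy\ge(1-\ep^2)\underline u$ on $[\underline h/4,\underline h]$) together with a direct lower bound $\int_0^{\underline h}J(x,y)\underline u\,dy\ge \hat C l_\ep\,\underline h^{1-\gamma}$. Crucially, these are combined with the \emph{fixed} plateau height $l_\ep=u^*-\sqrt\ep$, which guarantees $f(\underline u)\ge\rho\ep\,\underline u$ on all of $[0,l_\ep]$; your choice $\eta(t)=K\underline h^{1-\gamma}\to0$ makes $f(\underline u)/\underline u\to0$ at the plateau, so the fixed $(1-\ep^2)$-loss from any Proposition~\ref{p4.1}-type estimate cannot be absorbed uniformly in $t$. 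The paper recovers the profile by running the argument for each fixed $\ep$ and then letting $\ep\to0$, rather than by letting the plateau height drift to $u^*$ inside the sub-solution.
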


\begin{lemma}\label{l4.1} Under the assumptions of Theorem \ref{t4.1}, there is $C>0$ such that, when $t\gg1$,
 \bess
h(t)\le Ct^{\frac{1}{\gamma-1}} ~ ~ {\rm if ~}\gamma\in(1,2), ~ ~ {\rm and} ~ ~
h(t)\le Ct\ln t ~ ~ {\rm if ~ }\gamma=2.
\eess
\end{lemma}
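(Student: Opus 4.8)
\textbf{Proof proposal for Lemma \ref{l4.1}.}
The plan is to construct an explicit upper solution $(\bar u,\bar h)$ of \eqref{1.2} whose free boundary $\bar h(t)$ grows exactly at the rate asserted, and then invoke the comparison principle (\cite[Theorem 3.7]{LW21}) to conclude $h(t)\le\bar h(t)$. Since accelerated spreading occurs, we cannot use a travelling semi-wave profile; instead I would take $\bar u(t,x)\equiv u^*+\sigma$ (or simply $u^*$ up to a harmless constant) on $[0,\bar h(t)]$ for a small $\sigma>0$ chosen so that $f(u^*+\sigma)\le 0$ (possible by {\bf(F)}), together with the truncation $\bar u=0$ outside. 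For a constant state the first inequality in the comparison system reduces to $0\ge -d\left(\int_{\bar h(t)}^{\yy}J(x,y)\dy\right)(u^*+\sigma)+f(u^*+\sigma)$, which holds since both terms on the right are $\le 0$; the boundary value $\bar u(t,\bar h(t))\ge 0$ is automatic. Hence the only real constraint is the free-boundary inequality
\[
\bar h'(t)\ \ge\ \mu\int_0^{\bar h(t)}\!\!\int_{\bar h(t)}^{\yy} J(x,y)\,\bar u(t,x)\,\dy\,\dx
\ =\ \mu(u^*+\sigma)\int_0^{\bar h(t)}\!\!\int_{\bar h(t)}^{\yy} J(x-y)\,\dy\,\dx .
\]

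The heart of the argument is therefore an estimate of the double integral $\Phi(R):=\int_0^{R}\int_{R}^{\yy}J(x-y)\dy\dx$ for large $R$ under ${\bf(J^\gamma)}$. Substituting $z=y-x\ge 0$ and using $J(z)\le\varsigma_2 z^{-\gamma}$ for $z\gg1$, one gets $\Phi(R)=\int_0^R\int_{R-x}^{\yy}J(z)\dz\dx\le C\int_0^R (1+R-x)^{-(\gamma-1)}\dx = C\int_0^R(1+s)^{-(\gamma-1)}\ds$. For $\gamma\in(1,2)$ this last integral is $\le C\,R^{2-\gamma}$, while for $\gamma=2$ it is $\le C\ln R$. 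So it suffices to choose $\bar h$ satisfying $\bar h'(t)\ge C\mu(u^*+\sigma)\,\bar h(t)^{2-\gamma}$ (resp. $\ge C\mu(u^*+\sigma)\ln\bar h(t)$) with $\bar h(0)\ge h(T)$ for a suitable starting time $T$. Solving the ODE $\bar h'=c\,\bar h^{2-\gamma}$ gives $\bar h(t)\asymp t^{1/(\gamma-1)}$ when $\gamma\in(1,2)$; for $\gamma=2$ one checks directly that $\bar h(t)=Ct\ln t$ satisfies $\bar h'(t)=C\ln t+C\asymp\ln\bar h(t)$ for $t\gg1$, since $\ln\bar h(t)=\ln t+\ln\ln t+\ln C\sim\ln t$. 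Taking the constant $C$ large enough absorbs all multiplicative errors and the contribution of the bounded part of $J$, and guarantees the initial-data inequalities once $T$ is fixed (using that $h$ is finite at each time and $u\le u^*+\sigma$ eventually, by the comparison bound $\limsup_{t\to\yy}u\le u^*$).

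The step I expect to be the main obstacle is making the free-boundary differential inequality genuinely self-consistent: $\bar h'(t)$ must dominate $\mu(u^*+\sigma)\Phi(\bar h(t))$ \emph{for the $\bar h$ one is constructing}, so the ODE comparison has to be set up so that the chosen profile $\bar h$ is a supersolution of $\bar h'=c\Phi(\bar h)$, not merely asymptotically of the right order. Concretely one must verify $\Phi(R)\le C_1 R^{2-\gamma}$ (resp. $C_1\ln R$) with an \emph{explicit} $C_1$ valid for all $R\ge R_0$, handle the region $x$ near $R$ where $J(x-y)$ is not yet in its power-law regime (this only adds an $O(1)$ term, harmless after enlarging $C$), and then pick $C$ in $\bar h$ large enough that $\bar h'(t)\ge c\,\Phi(\bar h(t))$ holds for all $t\ge 0$ including small $t$; this last point is where one uses the freedom to translate time by $T$ and to enlarge $\bar h(0)$. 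Everything else — the constant-state computation, the truncation, and the final appeal to \cite[Theorem 3.7]{LW21} — is routine.
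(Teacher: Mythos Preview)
Your approach is correct, but the paper's proof is considerably shorter and more direct. You build an upper solution pair $(\bar u,\bar h)$ with $\bar u\equiv u^*+\sigma$ and then invoke the full comparison principle \cite[Theorem 3.7]{LW21}. The paper instead bypasses the construction of an upper solution altogether: since $\limsup_{t\to\infty}u(t,x)\le u^*$ uniformly, one has $u(t,x)\le 2u^*$ for $t\ge T$ and all $x\in[0,h(t)]$, and plugging this bound straight into the free-boundary equation of \eqref{1.2} gives the scalar ODE inequality
\[
h'(t)\le 2\mu u^*\int_0^{h(t)}\!\!\int_{h(t)}^\infty J(x-y)\,\dy\,\dx\qquad(t\ge T).
\]
The double integral is then estimated exactly as you do (and this is indeed the common core of both arguments), yielding $h'(t)\le C h(t)^{2-\gamma}$ for $\gamma\in(1,2)$ and $h'(t)\le C\ln h(t)$ for $\gamma=2$; integrating these scalar inequalities finishes the proof. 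So your route works, but the self-consistency issue you flag as the ``main obstacle''---making $\bar h'(t)\ge \mu(u^*+\sigma)\Phi(\bar h(t))$ hold for the very $\bar h$ you construct---is precisely what the paper avoids by never introducing $\bar h$ in the first place. The upper-solution machinery is genuinely needed for the lower bounds in Lemmas~\ref{l4.2}--\ref{l4.3}, but for the upper bound it is unnecessary overhead.
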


\begin{proof}Clearly, we have
 \bess
 \int_{0}^{h}\!\int_{h}^{\infty}\!J(x,y)\dy\dx&=&\int_{0}^{1}J(y)y\dy+\int_{1}^{h}J(y)y\dy+h\int_{h}^{\yy}J(y)\dy\approx h^{2-\gamma}\;\;\;\text{if}\;\;\gamma\in(1,2),
 \eess
and
\bess
\int_{0}^{h}\!\int_{h}^{\infty}\!J(x,y)\dy\dx&=&\int_{0}^{1}J(y)y\dy+\int_{1}^{h}J(y)y\dy+h\int_{h}^{\yy}J(y)\dy\approx\ln h\;\;\;\text{if}\;\;\gamma=2.
 \eess
Moreover, there is $T>0$ such that $u(t,x)\le2u^*$ for $t\ge T$ and $x\in[0,h(t)]$. Hence, for $t\ge T$,
\[h'(t)\le2\mu u^*\int_{0}^{h(t)}\!\int_{h(t)}^{\infty}\!J(x,y)\dy\dx,\]
which implies our desired results.
\end{proof}

In order to give the lower estimate of $h(t)$ we should construct some suitable lower solution. To this aim, we first state a proposition which can be proved by similar arguments with \cite[Lemma 6.5]{DN21}.

\begin{proposition}\label{p4.1} Suppose that $\kappa_2>\kappa_1>0$ and $P(x)$ satisfies the condition {\bf(J)}. Define
\[\psi(x)=\min\left\{1,\,\frac{\kappa_2-|x|}{\kappa_1}\right\}.\]
Then for any small $\ep>0$, there is a $\kappa_{\ep}>0$ relying only on $P$ and $\ep$ such that if $\min\left\{\kappa_1,\,\kappa_2-\kappa_1\right\}\ge \kappa_{\ep}$ we have
\[\int_{0}^{\kappa_2}P(x-y)\psi(y)\dy\ge(1-\ep)\psi(x) ~ ~ {\rm for} ~ x\in[\kappa_{\ep},\kappa_2].\]
\end{proposition}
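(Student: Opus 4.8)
The plan is to reduce the claimed inequality to a statement about the convolution of the fixed kernel $P$ with a Lipschitz ``ramp'' function, and then exploit the fact that $P$ integrates to $1$ together with a tail estimate coming from $P\in L^1(\mathbb{R})$. First I would fix $x\in[\kappa_\ep,\kappa_2]$ and split the integral $\int_0^{\kappa_2}P(x-y)\psi(y)\,\dy$ as $\int_{\mathbb{R}}P(x-y)\psi(y)\,\dy$ minus the two tails $\int_{-\yy}^0$ and $\int_{\kappa_2}^{\yy}$, where we extend $\psi$ by the same formula so that $\psi(y)\le 1$ everywhere and $\psi(y)\le 0$ for $y\ge\kappa_2$ (so the right tail contributes nothing harmful) while $\psi(y)\le 1$ for $y\le 0$. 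Since $x\ge\kappa_\ep$, the left tail is bounded by $\int_{-\yy}^{0}P(x-y)\,\dy=\int_x^{\yy}P(z)\,\dz\le\int_{\kappa_\ep}^{\yy}P(z)\,\dz$, which can be made smaller than any prescribed amount by taking $\kappa_\ep$ large, using $\int_{\mathbb{R}}P=1<\yy$.

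Next I would estimate $\int_{\mathbb{R}}P(x-y)\psi(y)\,\dy$ from below. Writing it as $\int_{\mathbb{R}}P(z)\psi(x-z)\,\dz$ and using that $\psi$ is $1/\kappa_1$-Lipschitz, we have $\psi(x-z)\ge\psi(x)-|z|/\kappa_1$, hence
\[
\int_{\mathbb{R}}P(z)\psi(x-z)\,\dz\ \ge\ \psi(x)\int_{\mathbb{R}}P(z)\,\dz-\frac{1}{\kappa_1}\int_{\mathbb{R}}|z|P(z)\,\dz,
\]
but since $P$ need not have a finite first moment I would instead split at a large radius $R$: on $|z|\le R$ use the Lipschitz bound to lose at most $R/\kappa_1$, and on $|z|>R$ simply use $\psi(x-z)\ge\psi(x)-1\ge -1$ together with $\int_{|z|>R}P(z)\,\dz$, which is small for $R$ large. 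This yields
\[
\int_{\mathbb{R}}P(z)\psi(x-z)\,\dz\ \ge\ \psi(x)\Big(1-\int_{|z|>R}P\Big)-\frac{R}{\kappa_1}-\int_{|z|>R}P\ \ge\ \psi(x)-\frac{R}{\kappa_1}-2\int_{|z|>R}P.
\]
For $x\in[\kappa_\ep,\kappa_2]$ and with $\min\{\kappa_1,\kappa_2-\kappa_1\}$ large we have $\psi(x)\ge\psi(\kappa_2)$ bounded below only trivially, so here one must be a little careful: the inequality we want is $\int\ge(1-\ep)\psi(x)$, i.e. we may tolerate an absolute error of order $\ep\,\psi(x)$, which degenerates as $x\uparrow\kappa_2$. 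The way around this is the standard one: when $\psi(x)$ is small, say $\psi(x)=(\kappa_2-x)/\kappa_1\le\delta$, the point $x$ lies within $\delta\kappa_1$ of $\kappa_2$, and then a direct computation (not a Lipschitz bound) shows $\int_0^{\kappa_2}P(x-y)\psi(y)\,\dy$ is comparable to $\psi(x)$ up to a multiplicative $(1-\ep)$ because the convolution of $P$ with a linear wedge near its kink reproduces the wedge up to $O(1/\kappa_1)$ relative error once $\kappa_1$ is large compared to the effective support of $P$. I would treat the two regimes $\psi(x)\ge\delta$ and $\psi(x)<\delta$ separately, choosing $R$, then $\delta$, then $\kappa_\ep$ in that order so that $R/\kappa_1$, $\int_{|z|>R}P$, and $\int_{\kappa_\ep}^{\yy}P$ are each at most $\ep\delta/10$, say.

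The main obstacle is exactly this degeneration near $x=\kappa_2$: a crude additive error bound is not enough for a \emph{multiplicative} $(1-\ep)$ estimate where $\psi(x)\to 0$. The cleanest fix, which I would adopt, is to prove the sharper local statement that for $x$ within the transition layer, $\int_{\mathbb{R}}P(x-y)\psi(y)\,\dy-\psi(x)$ has the sign of a second difference of the piecewise-linear $\psi$ convolved against $P$, hence is $\ge -\tfrac12\int_{\mathbb{R}}z^2 P(z)\,\dz/\kappa_1^2$ near the corner at $\kappa_2$ when that moment is finite, and in general is controlled by truncating $P$; combined with $\psi(x)\ge 1/\kappa_1$ (valid once $x\le\kappa_2-1$, absorbing the remaining sliver $[\kappa_2-1,\kappa_2]$ where $\psi$ is $O(1/\kappa_1)$ and $P$-convolution is handled by monotonicity) this gives the relative bound. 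Everything else—the tail estimates and the Lipschitz step—is routine once $\kappa_\ep$ is chosen large depending only on $P$ and $\ep$, which is precisely the asserted uniformity.
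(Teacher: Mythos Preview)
The paper does not supply its own proof of this proposition: it merely records that the result ``can be proved by similar arguments with \cite[Lemma 6.5]{DN21}'' and then uses it as a black box. So there is nothing on the paper's side to compare against, and the question is simply whether your outline closes.

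Your overall scheme is correct: pick $R$ with $\int_{|z|>R}P<\ep/4$, use $x\ge\kappa_\ep\ge R$ to kill the left tail $\int_{-\yy}^{0}P(x-y)\psi(y)\dy$, and use the Lipschitz constant $1/\kappa_1$ on the window $|z|\le R$. For $x$ in the plateau or well inside the linear ramp this gives the multiplicative bound once $\kappa_1\ge CR/\ep$.

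The gap is in your treatment of the region where $\psi(x)\to 0$. Your proposed fix invokes a second-moment bound $\tfrac12\int z^2P/\kappa_1^2$, which is unavailable under {\bf(J)} alone, and then falls back on ``$\psi(x)\ge 1/\kappa_1$ for $x\le\kappa_2-1$'' while handling the remaining interval $[\kappa_2-1,\kappa_2]$ ``by monotonicity''; this last step is not an argument, and on that interval $\psi(x)$ may be arbitrarily small compared to any fixed additive error. The clean observation you are circling but not stating is the following. For $x\in[\kappa_2-\kappa_1+R,\kappa_2]$ the function $\psi$ is exactly linear on $[x-R,\kappa_2]$, so with $z=x-y$,
\[
\int_{x-R}^{\kappa_2}P(x-y)\psi(y)\dy=\int_{x-\kappa_2}^{R}P(z)\Big(\psi(x)+\frac{z}{\kappa_1}\Big)\dz.
\]
Compare this with the symmetric integral $\int_{-R}^{R}P(z)(\psi(x)+z/\kappa_1)\dz=\psi(x)\int_{-R}^{R}P\ge(1-\ep/2)\psi(x)$, where the $z/\kappa_1$ term vanishes by the evenness of $P$. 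If $x\ge\kappa_2-R$ the difference of the two integrals is $-\int_{-R}^{x-\kappa_2}P(z)(\psi(x)+z/\kappa_1)\dz$, and on that range $z\le x-\kappa_2$ forces $\psi(x)+z/\kappa_1=(\kappa_2-x+z)/\kappa_1\le 0$, so the correction is \emph{nonnegative}. Hence $\int_{x-R}^{\kappa_2}P(x-y)\psi(y)\dy\ge(1-\ep/2)\psi(x)$ for every $x$ in the linear region, all the way to $x=\kappa_2$, with no moment hypothesis and no residual sliver. With this replacement your outline goes through, and $\kappa_\ep$ of order $R/\ep$ suffices.
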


\begin{lemma}\label{l4.2}Suppose that the condition ${\bf(J^\gamma)}$ holds with $\gamma\in(1,2)$ and spreading happens for the problem \eqref{1.2}. Then there is $C>0$ such that
\bes\label{4.1}
 &h(t)\ge C t^{\frac{1}{\gamma-1}} ~ ~ ~ {\rm for} ~ t\gg1,&\\[1mm]
\label{4.2}
 &\dd\liminf_{t\to\yy}u(t,x)\ge u^* ~ ~ {\rm uniformly ~ in} ~ [0,s(t)] ~ ~ {\rm for ~ any } ~ 0\le s(t)=t^{\frac{1}{\gamma-1}}o(1).&
\ees
\end{lemma}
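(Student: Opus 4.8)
The plan is to construct an explicit lower solution whose free boundary grows at the rate $t^{1/(\gamma-1)}$ and then invoke the comparison principle, following the strategy of \cite{DN212}. The heuristic behind the exponent is the following: once spreading has happened and $u$ is close to $u^*$ on the bulk of $[0,h(t)]$, the free boundary equation together with the estimate $\int_0^h\int_h^\yy J(x,y)\,dy\,dx\approx h^{2-\gamma}$ obtained in the proof of Lemma \ref{l4.1} forces $h'(t)\gtrsim c\,h(t)^{2-\gamma}$, whence $h(t)\gtrsim(ct)^{1/(\gamma-1)}$. Both \eqref{4.1} and \eqref{4.2} will then be read off from the pointwise bounds produced by the comparison.

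Concretely, I would first fix a small $\eta\in(0,1)$, then choose large $M,\theta>0$ and a small $\kappa>0$ (in that order), set $\underline h(t)=\kappa(t+\theta)^{1/(\gamma-1)}$ and $\delta(t)=M\,\underline h(t)^{1-\gamma}=M\kappa^{1-\gamma}(t+\theta)^{-1}$, and take $\underline u$ to be the trapezoidal profile
\[
\underline u(t,x)=\big(u^*-\delta(t)\big)\,\psi_t(x),\qquad \psi_t(x)=\min\Big\{1,\,\frac{\underline h(t)-x}{\eta\,\underline h(t)}\Big\},
\]
so that $\underline u\equiv u^*-\delta(t)$ on $[0,(1-\eta)\underline h(t)]$ and decreases linearly to $0$ on $[(1-\eta)\underline h(t),\underline h(t)]$. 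The point of these choices is that $\underline h'(t)/\underline h(t)=[(\gamma-1)(t+\theta)]^{-1}$, that $|\delta'(t)|=O(\underline h(t)^{-\gamma}\underline h'(t))$ is of lower order, and that consequently $\underline u_t=O(\underline h(t)^{1-\gamma})$ uniformly on $[0,\underline h(t)]\setminus\{(1-\eta)\underline h(t)\}$ — every relevant quantity being of size $\approx(t+\theta)^{-1}$.

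Next I would verify the lower-solution inequalities. For the free-boundary condition $\underline h'(t)\le\mu\int_0^{\underline h(t)}\int_{\underline h(t)}^{\yy}J(x,y)\underline u(t,x)\,dy\,dx$, bound the double integral below using $\underline u\ge u^*-\delta(t)$ on $[0,(1-\eta)\underline h(t)]$ and the same computation as in Lemma \ref{l4.1} (now read as a lower bound), giving $\ge c_1\underline h(t)^{2-\gamma}$ for $t\gg1$; since $\underline h'(t)=\frac{\kappa^{\gamma-1}}{\gamma-1}\underline h(t)^{2-\gamma}$, this holds once $\kappa$ is small. For the interior inequality $\underline u_t\le d\int_0^{\underline h(t)}J(x,y)\underline u(t,y)\,dy-dj(x)\underline u+f(\underline u)$ I would combine Proposition \ref{p4.1} with $P=J$, $\kappa_1=\eta\,\underline h(t)$, $\kappa_2=\underline h(t)$ (so $\min\{\kappa_1,\kappa_2-\kappa_1\}=\eta\,\underline h(t)\to\yy$), the tail estimates $\int_{\eta\underline h(t)}^{\yy}J\approx\underline h(t)^{1-\gamma}$ and $j(x)\le1$ coming from the condition ${\bf(J^\gamma)}$, and the reaction bound $f(v)\ge\rho\min\{v,u^*-v\}$ from \eqref{3.2}: together these show that the nonlocal defect $d\int_0^{\underline h(t)}J(x,y)\psi_t(y)\,dy-dj(x)\psi_t(x)$ is $\ge-C\,\underline h(t)^{1-\gamma}$, while $f(\underline u)\ge\rho\delta(t)=\rho M\underline h(t)^{1-\gamma}$ on the flat part, $f(\underline u)$ is bounded below by a fixed positive constant in the middle of the slope, and near $x=\underline h(t)$ the nonlocal term alone is positive and of size $\approx\underline h(t)^{1-\gamma}$; in every region these dominate $\underline u_t=O(\underline h(t)^{1-\gamma})$ once $M$ is large and $\kappa$ small (near $x=0$ one uses directly that $j(0)=\frac12$ keeps the nonlocal defect small, where Proposition \ref{p4.1} does not apply). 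Finally, since spreading occurs, $h(t)\to\yy$ and $u(t,\cdot)\to u^*$ locally uniformly, so there is $T>0$ with $h(T)\ge\underline h(0)$ and $u(T,x)\ge u^*-\delta(0)\ge\underline u(0,x)$ on $[0,\underline h(0)]$. The comparison principle (\cite[Theorem 3.7]{LW21}) then gives $h(t+T)\ge\underline h(t)$ and $u(t+T,x)\ge\underline u(t,x)$, which yields \eqref{4.1}; and for \eqref{4.2} one notes that any $0\le s(t)=t^{1/(\gamma-1)}o(1)$ satisfies $s(t)\le(1-\eta)\underline h(t-T)$ for $t\gg1$, so $u(t,x)\ge u^*-\delta(t-T)\to u^*$ uniformly on $[0,s(t)]$, while $\limsup_{t\to\yy}u\le u^*$ is the usual comparison with the constant $u^*$.

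The step I expect to be the real obstacle is the interior differential inequality near the corner $x=(1-\eta)\underline h(t)$, where $\underline u$ is close to $u^*$ so that $f(\underline u)$ is only of size $\approx\rho\delta(t)$ and must be balanced against the nonlocal defect created by the heavy tail of $J$ (the mass of $J(x,\cdot)$ lost outside $[0,\underline h(t)]$) together with the approximation error in Proposition \ref{p4.1}. This is exactly why $\delta(t)$ cannot be an arbitrary $o(1)$: it has to be of the sharp order $\underline h(t)^{1-\gamma}\approx(t+\theta)^{-1}$ with a large multiplicative constant, comparable to that defect. The remaining work is careful bookkeeping — checking that the exponents of $\underline h'/\underline h$, of $\delta$, of $\delta'$, of the tail integrals, and of $\underline u_t$ all line up at the common rate $(t+\theta)^{-1}$, and tracking the order in which the parameters $\eta,M,\theta,\kappa$ (and the $\ep,\kappa_{\ep}$ from Proposition \ref{p4.1}) must be fixed.
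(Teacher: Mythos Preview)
Your plan is correct and close in spirit to the paper's, but the construction differs in one substantive way that is worth flagging.

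The paper also takes $\underline h(t)=(l_1t+\theta)^{1/(\gamma-1)}$ and a trapezoidal profile $\underline u=l_\ep\min\{1,2(\underline h-x)/\underline h\}$, but with a \emph{fixed} plateau height $l_\ep=u^*-\sqrt\ep$, $\ep$ small.  The gain is that then $\underline u\le u^*-\sqrt\ep$ everywhere, so \eqref{3.2} gives the linear bound $f(\underline u)\ge\rho\ep\,\underline u$ uniformly on the whole slope.  This lets them write, on the slope,
\[
d\!\int_0^{\underline h}\!J\underline u-dj(x)\underline u+f(\underline u)\ge\min\{\tfrac{\rho\ep}{2},d\}\!\int_0^{\underline h}\!J\underline u+(d-\tfrac{\rho\ep}{2})^+\!\int_0^{\underline h}\!J\underline u-(d-\rho\ep)\underline u,
\]
feed Proposition \ref{p4.1} with error $\ep^2$ into the middle term (which then cancels the last), and use the direct lower bound $\int_0^{\underline h}J\underline u\ge\hat C l_\ep\,\underline h^{1-\gamma}$ in the first.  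The result is a one-line verification on the whole slope, with no corner case analysis; \eqref{4.2} then follows by sending $\ep\to0$ at the very end.

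Your time-dependent plateau $u^*-\delta(t)$ with $\delta(t)=M\underline h^{1-\gamma}$ also works, and has the virtue of giving \eqref{4.2} directly without a final limit.  The cost is exactly what you anticipate: since $\underline u$ approaches $u^*$, you lose the uniform bound $f(\underline u)\ge\rho\ep\,\underline u$ and must instead show the nonlocal defect is itself $\ge -C\underline h^{1-\gamma}$ by direct tail computation (Proposition \ref{p4.1} alone only gives $\ge -\ep'\psi_t(x)$ for a fixed $\ep'$, not the sharp rate).  That bound is true---the tail mass $\int_{\eta\underline h}^\infty J$ and the slope contribution $\frac{1}{\eta\underline h}\int_0^{\eta\underline h}zJ(z)dz$ are both $O((\eta\underline h)^{1-\gamma})$ under $(\mathbf J^\gamma)$---so your argument goes through, but it forces the region-by-region analysis you describe.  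One bookkeeping point: $\theta$ should be chosen \emph{after} $\kappa$, since you need $\eta\kappa\theta^{1/(\gamma-1)}\ge\kappa_{\ep'}$ for Proposition \ref{p4.1} to apply from the start.
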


\begin{proof}For positive constants $\theta,\,l_1$ and $l_{\ep}=u^*-\sqrt{\ep}$ with $0<\ep\ll1$, we define
  \[\underline{h}(t)=( l_1t+\theta)^{\frac{1}{\gamma-1}}, ~ ~ \underline{u}(t,x)= l_{\ep}\min\left\{1,\,2\frac{\underline{h}(t)-x}{\underline{h}(t)}\right\} ~ ~ {\rm for } ~ t\ge0, ~ x\in[0,\underline{h}(t)].\]
We will show that there exist suitable $\theta,\, l_1$ and $T>0$  such that
  \bes\label{4.3}
\left\{\begin{aligned}
&\underline u_t\le d\dd\int_{0}^{\underline h(t)}J(x,y)\underline u(t,y)\dy-dj(x)\underline u+ f(\underline u), && t>0,~x\in[0,\underline h(t))\setminus\left\{\frac{\underline{h}(t)}2\right\},\\
&\underline u(t,\underline h(t))\le0,&& t>0,\\
&\underline h'(t)\le\mu\dd\int_{0}^{\underline h(t)}\int_{\underline h(t)}^{\infty}
J(x,y)\underline u(t,x)\dy\dx,&& t>0,\\
&\underline h(0)\le h(T),\;\;\underline u(0,x)\le u(T,x),&& x\in[0,\underline h(0)].
\end{aligned}\right.
\ees
If \eqref{4.3} holds, by the comparison principle (\cite[Theorem 3.7]{LW21}, which still holds for such situation; one can see \cite[Remark 2.4]{DN21} for an explanation), we immediately get
\[u(t+T,x)\ge \underline u(t,x) ~ ~ {\rm and} ~ ~ h(t+T)\ge\underline h(t), ~ ~ ~ {\rm for } ~ t\ge0 ~ x\in[0,\underline h(t)],\]
which indicates \eqref{4.1}. As for \eqref{4.2}, one easily deduces
\bess
\max_{[0,\,s(t)]}|\underline u(t,x)- u^*+\sqrt{\ep}|=(u^*-\sqrt{\ep})\left(1-\min\left\{1,\,2\frac{\underline{h}(t)-s(t)}{\underline{h}(t)}\right\}\right)\to0 ~ ~ {\rm as} ~ t\to\yy,
\eess
which, combining with \eqref{4.1}, yields $\liminf_{t\to\yy}u(t,x)\ge u^*-\sqrt{\ep}$ uniformly in $[0,s(t)]$.
By the arbitrariness of $\ep$, we derive \eqref{4.2}.

To this end, we first check the third inequality of \eqref{4.3}. Simple calculations show that for $\theta\gg1$,
  \bess
\mu\dd\int_{0}^{\underline h(t)}\!\int_{\underline h(t)}^{\infty}\!
J(x,y)\underline u(t,x)\dy\dx&\ge&2\mu  l_{\ep}\dd\int_{\frac{\underline{h}(t)}2}^{\underline h(t)}\!\int_{\underline h(t)}^{\infty}\!
J(x,y)\frac{\underline{h}(t)-x}{\underline{h}(t)}\dy\dx\\
&=&\frac{2\mu l_{\ep}}{\underline{h}(t)}\dd\int_{-\frac{\underline{h}(t)}2}^{0}\!\int_{0}^{\infty}\!J(x,y)(-x)\dy\dx\\
&=&\frac{2\mu  l_{\ep}}{\underline{h}(t)}\left(\int_{0}^{\frac{\underline{h}(t)}2}
\int_{0}^{y}\!\!J(y)x\dx\dy+\int_{\frac{\underline{h}(t)}2}^{\yy}\!\!
\int_{0}^{\frac{\underline{h}(t)}2}\!J(y)x\dx\dy\right)\\
 &\ge& \frac{2\mu l_{\ep}}{\underline{h}(t)}\int_{0}^{\frac{\underline{h}(t)}2}\int_{0}^{y}\!\!J(y)x\dx\dy\\
 &\ge&\frac{\mu l_{\ep}}{\underline{h}(t)}\int_{\frac{\underline{h}(t)}{4}}^{\frac{\underline{h}(t)}2}
J(y)y^2\dy\\
&\ge&\frac{\varsigma_1\mu l_{\ep}}{\underline{h}(t)}\int_{\frac{\underline{h}(t)}{4}}^{\frac{\underline{h}(t)}2}
y^{2-\gamma}\dy\ge\tilde{C} l_{\ep}\mu \underline{h}^{2-\gamma}(t)
\eess
with $\tilde{C}$ depending only on $J$. On the other hand,
\[\underline{h}'(t)=\frac{ l_1}{\gamma-1}( l_1t+\theta)^{\frac{2-\gamma}{\gamma-1}}=\frac{ l_1}{\gamma-1}\underline{h}^{2-\gamma}(t)\le\tilde{C} l_{\ep}\mu \underline{h}^{2-\gamma}(t)\]
provided that $\tilde{C} l_{\ep}\mu\ge\frac{ l_1}{\gamma-1}$. Thus, the third inequality in \eqref{4.3} holds.

Now we verify the first inequality of \eqref{4.3}. Clearly, $\underline{u}(t,x)\ge l_{\ep}q(x,t)$. For $x\in[{\underline{h}(t)}/4,\underline{h}(t)]$,
  \bess
\int_{0}^{\underline h(t)}J(x,y)\underline u(t,y)\dy&=&\int_{-x}^{\underline h(t)-x}J(y)\underline u(t,x+y)\dy\\
  &\ge&\int_{-\frac{\underline{h}(t)}{4}}^{-\frac{\underline{h}(t)}{8}}J(y)\underline u(t,x+y)\dy\\
&=& l_{\ep}\int_{-\frac{\underline{h}(t)}{4}}^{-\frac{\underline{h}(t)}{8}}J(y)
q(x+y,t)\dy\\
 &\ge& l_{\ep}\int_{-\frac{\underline{h}(t)}{4}}^{-\frac{\underline{h}(t)}{8}}
 \frac{\varsigma_1}{|y|^{\gamma}}q(x+y,t)\dy\\
&\ge& \frac{ l_{\ep}}{\underline{h}(t)}\int_{-\frac{\underline{h}(t)}{4}}^{-\frac{\underline{h}(t)}{8}}
\frac{\varsigma_1}{|y|^{\gamma}}(-y)\dy\ge\hat C l_{\ep}\underline{h}^{1-\gamma}(t)
\eess
with $\hat{C}$ depending only on $J$. For $x\in [0,{\underline{h}(t)}/{4}]$, we have
\bess
\int_{0}^{\underline h(t)}J(x,y)\underline u(t,y)\dy&\ge& l_{\ep}\int_{0}^{\frac{\underline{h}(t)}2}J(x,y)\dy\\
&=& l_{\ep}(j(x)-\ep)+ l_{\ep}\left(\ep-\int_{\frac{\underline{h}(t)}2-x}^{\yy}J(y)\dy\right)\\
 &\ge& l_{\ep}(j(x)-\ep)+ l_{\ep}\left(\ep-\int_{\frac{\underline{h}(t)}{4}}^{\yy}J(y)\dy\right)\\
&\ge&   l_{\ep}(j(x)-\ep)=(j(x)-\ep)\underline{u}(t,x)
\eess
provided that $\theta\gg1$. Moreover, from \eqref{3.2} we have that, for $x\in [0,{\underline{h}(t)}/{4}]$,
 \bes\label{4.4}
f(\underline u)\ge \rho\min\{\underline u,\,u^*-\underline u\}\ge \rho\min\{ l_{\ep},\,u^*-\underline u\}\ge \rho\sqrt{\ep}.
 \ees
Take $\kappa_2=\underline{h}(t)$ and $\kappa_1=\underline{h}(t)/2$ in Proposition \ref{p4.1} to obtain
\[\int_{0}^{\underline h(t)}J(x,y)\underline u(t,y)\dy\ge(1-\ep^2)\underline{u}(t,x) ~ ~ ~ {\rm for} ~ t>0, ~ x\in\kk[\frac{\underline{h}(t)}{4},\,\underline{h}(t)\rr].\]
Additionally, it follows from \eqref{3.2} that
\bes\label{4.5}
f(\underline u)\ge \rho\min\{\underline u,\,u^*-\underline u\}\ge \rho\ep\underline{u} ~ ~ {\rm if } ~ \ep ~ {\rm is} ~ {\rm small} ~ {\rm enough}.
\ees
Hence, for $x\in [0,\underline{h}(t)/4]$, we have
\bess
&&d\dd\int_{0}^{\underline h(t)}J(x,y)\underline u(t,y)\dy-dj(x)\underline u+ f(\underline u)\ge-d\ep\underline{u}+\rho\sqrt{\ep}\ge-d\ep u^*+\rho\sqrt{\ep}\ge0  \;\; {\rm if } ~ \ep\le\left(\frac{\rho}{du^*}\right)^2,
\eess
and for $x\in [\frac{\underline{h}(t)}{4},\underline{h}(t)]$,
\bes
 &&d\dd\int_{0}^{\underline h(t)}J(x,y)\underline u(t,y)\dy-dj(x)\underline u+ f(\underline u)\nonumber\\
&\ge& d\dd\int_{0}^{\underline h(t)}J(x,y)\underline u(t,y)\dy-\left(d-\rho\ep\right)\underline u\nonumber\\
&=&\left(\min\left\{\frac{\rho\ep}2,d\right\}+\left(d-\frac{\rho\ep}2\right)^+\right)
\int_{0}^{\underline h(t)}J(x,y)\underline u(t,y)\dy-\left(d-\rho\ep\right)\underline u\nonumber\\
&\ge&\min\left\{\frac{\rho\ep}2,d\right\}\hat C l_{\ep}\underline{h}^{1-\gamma}(t)+\left(d-\frac{\rho\ep}2\right)^+(1-\ep^2)\underline{u}
-\left(d-\rho\ep\right)\underline u\nonumber\\
&\ge&\min\left\{\frac{\rho\ep}2,d\right\}\hat C  l_{\ep}\underline{h}^{1-\gamma}(t) ~ ~ ~ {\rm with } ~ \ep ~  {\rm small} ~ {\rm sufficiently}.
\label{x.1}\ees
Besides, we have $\underline{u}_t(t,x)=0$ for $t>0$ and $x\in[0,\frac{\underline{h}(t)}2]$, and
\[\underline{u}_t(t,x)=2 l_{\ep}\frac{x\underline{h}'(t)}{\underline{h}^2(t)}\le 2 l_{\ep}\frac{\underline{h}'(t)}{\underline{h}(t)}=\frac{2 l_1 l_{\ep}}{\gamma-1}\underline{h}^{1-\gamma}(t) ~ ~ {\rm for} ~ t>0, ~ x\in(\frac{\underline{h}(t)}2,\underline{h}(t)).\]
So the first inequality of \eqref{4.3} holds if $\frac{2 l_1}{\gamma-1}\le\min\left\{\frac{\rho\ep}2,d\right\}\hat C$. Moreover, since spreading happens, there is $T>0$ such that
\[\underline{h}(0)\le h(T) ~ ~ {\rm and } ~ ~ \underline{u}(0,x)\le  l_{\ep}=u^*-\sqrt{\ep}\le u(T,x) ~ ~ {\rm for } ~ x\in[0,\underline{h}(0)].\]
The proof is complete.
\end{proof}

\begin{lemma}\label{l4.3}Let the condition ${\bf(J^\gamma)}$ hold with $\gamma=2$ and spreading happen for the problem \eqref{1.2}. Then
\bess
&h(t)\ge C t\ln t ~ ~  {\rm for ~ some ~ constant } ~ C>0 ~ {\rm and} ~ t\gg1,\\
&\dd\liminf_{t\to\yy}u(t,x)\ge u^* ~ ~ {\rm uniformly ~ in} ~ [0,s(t)] ~ ~ {\rm for ~ any } ~ 0\le s(t)=(t\ln t)o(1).
\eess
\end{lemma}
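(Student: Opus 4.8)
The plan is to follow the proof of Lemma~\ref{l4.2}, but since for $\gamma=2$ the outer--half linear profile used there produces only a linear front (the flux across the free boundary is then $O(1)$), I would build a lower solution whose front grows like $t\ln t$ and whose profile has a \emph{thin} transition layer near the free boundary. Fix $0<\ep\ll1$, set $ l_\ep=u^*-\sqrt\ep$, and for a small $C>0$ and large $\theta,T$ to be chosen put
\[\underline h(t)=C(t+\theta)\ln(t+\theta),\qquad m(t)=\sqrt{\underline h(t)},\qquad
\underline u(t,x)= l_\ep\min\Big\{1,\ \frac{\underline h(t)-x}{m(t)}\Big\},\]
so $\underline u\equiv l_\ep$ on the plateau $[0,\underline h(t)-m(t)]$, decreases linearly to $0$ on the transition layer $[\underline h(t)-m(t),\underline h(t)]$, and has a single corner at $x=\underline h(t)-m(t)$. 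I would then check that $(\underline u,\underline h)$ is a lower solution in the sense of \eqref{4.3}, with the corner excluded from the differential inequality, and invoke the comparison principle (\cite[Theorem 3.7]{LW21}, valid here by \cite[Remark 2.4]{DN21}) to get $u(t+T,x)\ge\underline u(t,x)$ and $h(t+T)\ge\underline h(t)$. Since $m(t)=o(\underline h(t))$, the first assertion follows, and the second follows exactly as in Lemma~\ref{l4.2}: if $s(t)=(t\ln t)o(1)$ then $s(t)<\underline h(t-T)-m(t-T)$ for $t\gg1$, whence $u(t,s(t))\ge\underline u(t-T,s(t))= l_\ep=u^*-\sqrt\ep$, and $\ep$ is arbitrary.

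The $t\ln t$ rate is forced by the free-boundary inequality $\underline h'(t)\le\mu\int_0^{\underline h(t)}\int_{\underline h(t)}^\yy J(x,y)\underline u(t,x)\,dy\,dx$: with $\bar J(s):=\int_s^\yy J$ and ${\bf(J^\gamma)}$ for $\gamma=2$, the plateau alone contributes at least $\mu l_\ep\int_{m(t)}^{\underline h(t)}\bar J(s)\,ds\ge\mu l_\ep\varsigma_1\ln\!\big(\underline h(t)/m(t)\big)=\tfrac12\mu l_\ep\varsigma_1\ln\underline h(t)$, while $\underline h'(t)=C(\ln(t+\theta)+1)$; choosing $C$ small (and $\theta$ large) makes the inequality hold for all $t\ge0$. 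For the interior inequality I would split $[0,\underline h(t))$ into three parts. On the plateau $\underline u_t=0$, and Proposition~\ref{p4.1} (applied with $\kappa_2=\underline h(t)$, $\kappa_1=m(t)$, both $\to\yy$, giving $\int_0^{\underline h}J(x,y)\underline u(t,y)\,dy\ge(1-\ep)\underline u(t,x)$ for $x\ge\kappa_\ep$) together with \eqref{3.2} bounds the right-hand side below by $-d\ep l_\ep+f( l_\ep)\ge\tfrac12\rho\sqrt\ep>0$; for $x\in[0,\kappa_\ep]$ one instead uses $\int_0^{\underline h}J(x,y)\,dy=j(x)-\bar J(\underline h(t)-x)$, which is close to $j(x)$ for $\theta$ large. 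On the outer part of the transition layer $\underline u_t\le l_\ep\underline h'(t)/m(t)\to0$, and Proposition~\ref{p4.1} plus \eqref{3.2} give the right-hand side $\ge-d\ep\underline u+\rho\min\{\underline u,u^*-\underline u\}$; where $\underline u\ge\sqrt\ep$ this is a fixed positive constant, and where $\underline u<\sqrt\ep$ it is $\ge(\rho-d\ep)\underline u\ge l_\ep\underline h'(t)/m(t)$ as long as $\underline h(t)-x\ge\underline h'(t)/(\rho-d\ep)$.

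The step I expect to be the main obstacle is the differential inequality in the thin window $\underline h(t)-\underline h'(t)/(\rho-d\ep)<x<\underline h(t)$ immediately behind the front: there $\underline u$ is tiny, so the reaction term and Proposition~\ref{p4.1} are useless against $\underline u_t\sim l_\ep\underline h'(t)/m(t)$, and the feeding from the plateau ($\gtrsim l_\ep\varsigma_1/m(t)$) is too weak. One must extract the extra logarithm from the \emph{self-interaction} of the transition layer: writing $x=\underline h(t)-\xi$ with $\xi\le\underline h'(t)/(\rho-d\ep)\ll m(t)$ and substituting $y=\underline h(t)-s$,
\[\int_0^{\underline h(t)}J(x,y)\underline u(t,y)\,dy\ \ge\ \frac{ l_\ep}{m(t)}\int_{2\xi}^{m(t)}J(s-\xi)\,s\,ds\ \ge\ \frac{ l_\ep\varsigma_1}{m(t)}\ln\frac{m(t)}{2\xi}\ \gtrsim\ \frac{ l_\ep\varsigma_1}{m(t)}\ln\underline h(t),\]
where $\gamma=2$ is exactly the borderline case in which $\int^{m}J(s)s\,ds\sim\ln m$ (the same elementary estimates as in Lemma~\ref{l4.1}). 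Combined with $f(\underline u)\ge\rho\underline u$ and $-dj(x)\underline u\ge-d\underline u$, the right-hand side is at least $\tfrac{d l_\ep\varsigma_1}{m(t)}\ln\underline h(t)-\tfrac{(d-\rho)^+ l_\ep\underline h'(t)}{(\rho-d\ep)m(t)}$, which dominates $\underline u_t\le l_\ep\underline h'(t)/m(t)$ once $C$ is small enough relative to $d,\rho,\varsigma_1,\ep$. The order of choices is: $\ep$ small (so $\rho-d\ep>0$ and $\rho\sqrt\ep$ beats $d\ep u^*$); then $C$ small; then $\theta$ large (to push every ``for $t$ large'' statement down to $t=0$); then $T$ large, so that $\underline h(0)\le h(T)$ and $\underline u(0,\cdot)\le u(T,\cdot)$, which is possible since spreading occurs and $u(t,\cdot)\to u^*$ locally uniformly.
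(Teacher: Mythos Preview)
Your proposal is correct and follows essentially the same strategy as the paper: build a lower solution with a plateau of height $l_\ep=u^*-\sqrt\ep$ and a thin linear transition layer near $\underline h(t)$, then extract the extra logarithm from the nonlocal interaction across the layer using $J(y)\ge\varsigma_1|y|^{-2}$.

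The differences are cosmetic. First, the paper takes the layer width to be $(t+\theta)^\alpha$ for a fixed $\alpha\in(0,1)$ rather than your $m(t)=\sqrt{\underline h(t)}$; either choice works since both are $\gg1$ and $o(\underline h(t))$. Second, and more usefully, the paper avoids your three-way split of the interior region: it gives a single lower bound $\int_0^{\underline h}J(x,y)\underline u\,dy\ge c\,l_\ep\ln(t+\theta)/(t+\theta)^\alpha$ valid on the whole range $[\tfrac12(\underline h-(t+\theta)^\alpha),\underline h]$, and then absorbs $-dj(x)\underline u$ by the splitting $d=\min\{\rho\ep/2,d\}+(d-\rho\ep/2)^+$ together with Proposition~\ref{p4.1} (exactly the trick used in \eqref{x.1}). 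This handles your ``thin window'' automatically, since the direct lower bound is uniform right up to $x=\underline h(t)$, and $\underline u_t$ has the same order $\ln(t+\theta)/(t+\theta)^\alpha$ everywhere on the layer. Your separate thin-window analysis is correct (and makes the source of the logarithm very explicit), but the paper's organization is shorter.
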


\begin{proof}For the fixed $\alpha\in(0,1)$ and $ l_{\ep}=u^*-\sqrt{\ep}$ with $0<\ep\ll1$, we define
\[\underline{h}(t)= l_1(t+\theta)\ln(t+\theta),\;\;\;\underline{u}(t,x)= l_{\ep}\min\left\{1,\,\frac{\underline{h}(t)-x}{(t+\theta)^{\alpha}}\right\} ~ ~ {\rm for} ~ t\ge0, ~ x\in[0,\underline{h}(t)]\]
with $\theta,\, l_1$ to be determined later. Clearly,
 \[\frac{\underline{h}(t)}{(t+\theta)^{\alpha}}\to\yy ~ ~ {\rm uniformly} ~ t\ge0 ~ ~ {\rm as}~ \theta\to\yy.\]
We will show that there exist suitable $\theta,\,l_1$ and $T>0$ such that
  \bes\label{4.6}
\left\{\begin{aligned}
&\underline u_t\le d\dd\int_{0}^{\underline h(t)}J(x,y)\underline u(t,y)\dy-dj(x)\underline u+ f(\underline u), && t>0,~x\in[0,\underline h(t))\setminus\left\{\underline{h}(t)-(t+\theta)^{\alpha}\right\},\\
&\underline u(t,\underline h(t))\le0,&& t>0,\\
&\underline h'(t)\le\mu\dd\int_{0}^{\underline h(t)}\!\!\int_{\underline h(t)}^{\infty}
J(x,y)\underline u(t,x)\dy\dx,&& t>0,\\
&\underline h(0)\le h(T),\;\;\underline u(0,x)\le u(T,x),&& x\in[0,\underline h(0)].
\end{aligned}\right.
\ees
Similarly to Lemma \ref{l4.2}, we can complete the proof if \eqref{4.6} holds. Firstly, using $J(x,y)=J(x-y)$,
\bess
\mu\dd\int_{0}^{\underline h(t)}\!\!\int_{\underline h(t)}^{\infty}\!
J(x,y)\underline u(t,x)\dy\dx&\ge&\mu l_{\ep}\dd\int_{\frac{\underline{h}(t)}2}^{\underline{h}(t)-(t+\theta)^{\alpha}}\!\int_{\underline h(t)}^{\infty}\!J(x,y)\dy\dx\\
&=&\mu  l_{\ep}\int_{-\frac{\underline{h}(t)}2}^{-(t+\theta)^{\alpha}}\!\int_{0}^{\infty}\!
J(x,y)\dy\dx\\
 &=&\mu  l_{\ep}\int_{-\frac{\underline{h}(t)}2}^{-(t+\theta)^{\alpha}}\!\int_{-x}^{\infty}\!
J(y)\dy\dx\\
&=& l_{\ep}\mu\left(\int_{(t+\theta)^{\alpha}}^{\frac{\underline{h}(t)}2}
\int_{(t+\theta)^{\alpha}}^{y}
J(y)\dx\dy+\int_{\frac{\underline{h}(t)}2}^{\yy}
\int_{(t+\theta)^{\alpha}}^{\frac{\underline{h}(t)}2}
J(y)\dx\dy\right)\\
 &\ge& l_{\ep}\mu\int_{(t+\theta)^{\alpha}}^{\frac{\underline{h}(t)}2}\int_{(t+\theta)^{\alpha}}^{y}
J(y)\dx\dy\\
 &\ge&  l_{\ep}\mu\int_{2(t+\theta)^{\alpha}}^{\frac{\underline{h}(t)}2}
J(y)\left[y-(t+\theta)^{\alpha}\right]\dy\\
&\ge& \frac{ l_{\ep}\mu}2\int_{2(t+\theta)^{\alpha}}^{\frac{\underline{h}(t)}2}
J(y)y\dy\\
 &\ge& \frac{ l_{\ep}\mu \varsigma_1}2\int_{2(t+\theta)^{\alpha}}^{\frac{\underline{h}(t)}2}
y^{-1}\dy\\
&=&\frac{ l_{\ep}\mu \varsigma_1}2\left[\ln(t+\theta)+\ln l_1+\ln\ln(t+\theta)-2\ln2-\alpha\ln(t+\theta)\right]\\
&\ge&\frac{ l_{\ep}\mu \varsigma_1}2(1-\alpha)\ln(t+\theta)
\eess
provided that $\theta$ is large enough. Direct calculations show that
\[\underline{h}'(t)= l_1\ln(t+\theta)+ l_1\le 2 l_1\ln(t+\theta)\le\frac{ l_{\ep}\mu \varsigma_1}2(1-\alpha)\ln(t+\theta) ~ ~ ~ {\rm if}\; ~ \frac{ l_{\ep}\mu \varsigma_1}2(1-\alpha)\ge2 l_1.\]
Hence the third inequality of \eqref{4.6} holds. Now we prove the first inequality of \eqref{4.6}. Obviously, $\underline{u}(t,x)\ge l_{\ep}\frac{\underline{h}-x}{2(t+\theta)^{\alpha}}$ for $x\in[\underline{h}(t)-2(t+\theta)^{\alpha},\underline{h}(t)]$. Thus, for $x\in[\underline{h}(t)-(t+\theta)^{\alpha},\underline{h}(t)]$,
 \bess
\int_{0}^{\underline h(t)}J(x,y)\underline u(t,y)\dy&=&\int_{-x}^{\underline h(t)-x}J(y)\underline u(t,x+y)\dy\\
 &\ge&\frac{ l_{\ep}}2\int_{-(t+\theta)^{\alpha}}^{-(t+\theta)^{\alpha/2}}J(y)
 \frac{\underline{h}(t)-x-y}{(t+\theta)^{\alpha}}\dy\\
&\ge&\frac{ l_{\ep}}2\int_{-(t+\theta)^{\alpha}}^{-(t+\theta)^{\alpha/2}}J(y)
\frac{-y}{(t+\theta)^{\alpha}}\dy\\
 &\ge&\frac{ l_{\ep}\varsigma_1}2\int_{-(t+\theta)^{\alpha}}^{-(t+\theta)^{\alpha/2}}
 \frac{(-y)^{-1}}{(t+\theta)^{\alpha}}\dy\\
&\ge&\frac{ l_{\ep}\varsigma_1\alpha\ln(t+\theta)}{4(t+\theta)^{\alpha}}.
\eess
Moreover, for $x\in[\frac{\underline{h}(t)-(t+\theta)^{\alpha}}2,\underline{h}(t)-(t+\theta)^{\alpha}]$, we have
\bess
 \int_{0}^{\underline h(t)}J(x,y)\underline u(t,y)\dy&=&\int_{-x}^{\underline h(t)-x}J(y)\underline u(t,x+y)\dy\\
  &\ge&  l_{\ep}\int_{-(t+\theta)^{\alpha}}^{-(t+\theta)^{\alpha/2}}J(y)\dy\\
&\ge & l_{\ep}\varsigma_1\int_{-(t+\theta)^{\alpha}}^{-(t+\theta)^{\alpha/2}}(-y)^{-2}\dy\\
&=&\varsigma_1 l_{\ep}\frac{(t+\theta)^{\alpha/2}-1}{(t+\theta)^{\alpha}}\\
 &\ge& \frac{ l_{\ep}\varsigma_1\alpha\ln(t+\theta)}{2(t+\theta)^{\alpha}}
\eess
with $\theta\gg1$. Take $\kappa_2=\underline{h}(t)$ and $\kappa_1=(t+\theta)^{\alpha}$ in Proposition \ref{p4.1} to deduce
\[\int_{0}^{\underline h(t)}J(x,y)\underline u(t,y)\dy\ge(1-\ep^2)\underline{u}(t,x) ~ ~ ~ {\rm for} ~ t>0, ~ x\in\kk[\frac{\underline{h}(t)-(t+\theta)^{\alpha}}2,\,\underline{h}(t)\rr].\]
For $x\in[0,\frac{\underline{h}(t)-(t+\theta)^{\alpha}}2]$, we can argue as in the proof of Lemma \ref{l4.2} to get
\[\int_{0}^{\underline h(t)}J(x,y)\underline u(t,y)\dy\ge(j(x)-\ep)\underline{u}(t,x),\]
and thanks to \eqref{4.4}, we have
\[d\dd\int_{0}^{\underline h(t)}J(x,y)\underline u(t,y)\dy-dj(x)\underline u+ f(\underline u)\ge-\ep du^*+\rho\sqrt{\ep}\ge0 ~ ~ {\rm if } ~ \ep ~ {\rm is} ~ {\rm small} ~ {\rm enough}.\]
For $x\in[\frac{\underline{h}(t)-(t+\theta)^{\alpha}}2,\underline{h}(t)]$, similarly to the  derivation of \eqref{x.1} we can deduce, by using \eqref{4.5},
\bess
 d\dd\int_{0}^{\underline h(t)}J(x,y)\underline u(t,y)\dy-dj(x)\underline u+ f(\underline u)\ge\min\left\{\frac{\rho\ep}2,d\right\}\frac{ l_{\ep}\varsigma_1\alpha\ln(t+\theta)}{4(t+\theta)^{\alpha}} ~ ~ {\rm if } ~ 0<\ep\ll 1.
\eess
On the other hand, we have $\underline{u}_t=0$ for $t>0$ and $x\in[0,\underline{h}(t)-(t+\theta)^{\alpha})$, and
 \[\underline{u}_t=\frac{ l_1 l_{\ep}(1-\alpha)\ln(t+\theta)+ l_1 l_{\ep}}{(t+\theta)^{\alpha}}
 +\frac{ l_{\ep}\alpha x}{(t+\theta)^{1+\alpha}}\le\frac{\ln(t+\theta)}{(t+\theta)^{\alpha}}\left[ l_1 l_{\ep}(1-\alpha)
 + l_1 l_{\ep}+ l_1\right]\]
for $t>0$ and $x\in(\underline{h}(t)-(t+\theta)^{\alpha},\underline{h}(t)]$. Thus, the first inequality of \eqref{4.6} holds if $ l_1$ is suitably small. Moreover, for $\theta$ and $ l_1$ chosen as above, since spreading happens, we can choose $T>0$ such that $\underline{h}(0)\le h(T)$ and $\underline{u}(0,x)\le l_{\ep}=u^*-\sqrt{\ep}\le u(T,x)$. The proof is end.
\end{proof}
Theorem \ref{t4.1} directly follows from Lemmas \ref{l4.1}, \ref{l4.2}, \ref{l4.3} and the fact that $\limsup_{t\to\yy}u(t,x)\le u^*$ uniformly in $\overline{\mathbb{R}}^+$.
\section{Discussion}

This paper is focused on some sharp estimates for model \eqref{1.2}, in which the species is assumed to only enlarge their habitat from right boundary. As for the left boundary $x=0$, we suppose that there is no flux of populations through it, which is analogous to the usual homogenous Neumann boundary condition $\frac{\partial u}{\partial n}=0$. The well-posedness, spreading-vanishing dichotomy and spreading speed have been discussed in \cite{LW21}. Our aims in this paper are the following two aspects.

\sk{\rm(1)}\, In \cite{LW21}, they showed that $\lim_{t\to\yy}u(t,x)=u^*$ locally uniformly in $\overline{\mathbb{R}}^+$ if spreading happens. However, we here prove the more accurate longtime behavior for solution component $u$ of \eqref{1.2}, namely, Theorem \ref{t2.1}, which is also different from that of problem \eqref{1.3} since spreading speed of \eqref{1.3} is always finite and accelerated spreading may occur for \eqref{1.2}. Moreover, the limiting profiles of \eqref{1.2} are investigated.

\sk{\rm(2)}\, For the kernel function with compact support, we give some estimates for spreading speed which seem to be different from those of \cite{DN21} and \cite{DN212}; For the kernel function behaving like $|x|^{-\gamma}$ with $\gamma\in(1,2]$ near infinity, we show that the similar estimates with those in \cite{DN21} and \cite{DN212} for rate of accelerated spreading hold for \eqref{1.2}. Particularly, some interesting conclusions can be drawn from Theorems \ref{t2.1} and  \ref{t4.1}: (1) in the case of accelerated spreading, the population density of species may grow faster than the finite spreading case; (2) if we improve the condition of $J$, such as letting $J$ satisfy ${\bf(J^\gamma)}$, then more specific longtime behaviors can be obtained.

As we see, for local diffusion free boundary problems, sharper estimates have been obtained, for example, please see \eqref{1.4}. However, whether the results in \eqref{1.4} hold for the nonlocal version \eqref{1.1} or \eqref{1.2} is still open. We leave it as an important future work.

\end{document}